\theoremstyle{plain} 
\newtheorem{thm}{Theorem}[section]
\newtheorem*{introthm*}{Theorem}
\newtheorem{cor}[thm]{Corollary}
\newtheorem{lem}[thm]{Lemma}
\newtheorem{lemma}[thm]{Lemma}
\newtheorem{prop}[thm]{Proposition}
\theoremstyle{definition}
\newtheorem{defn}[thm]{Definition}
\newtheorem{ex}[thm]{Example}
\newlength{\thmtopspace}                
\newlength{\thmbotspace}                
\newlength{\thmheadspace}               
\newlength{\thmindent}                  
\theoremstyle{remark}
\newtheorem{rem}[thm]{Remark}
\newtheorem*{rems}{Remark}
\newsavebox\myboxA
\newsavebox\myboxB
\newlength\mylenA
\newcommand*\xoverline[2][0.75]{%
    \sbox{\myboxA}{$\m@th#2$}%
    \setbox\myboxB\null
    \ht\myboxB=\ht\myboxA%
    \dp\myboxB=\dp\myboxA%
    \wd\myboxB=#1\wd\myboxA
    \sbox\myboxB{$\m@th\overline{\copy\myboxB}$}
    \setlength\mylenA{\the\wd\myboxA}
    \addtolength\mylenA{-\the\wd\myboxB}%
    \ifdim\wd\myboxB<\wd\myboxA%
       \rlap{\hskip 0.5\mylenA\usebox\myboxB}{\usebox\myboxA}%
    \else
        \hskip -0.5\mylenA\rlap{\usebox\myboxA}{\hskip 0.5\mylenA\usebox\myboxB}%
    \fi}
\newcommand*\xunderline[2][0.75]{%
    \sbox{\myboxA}{$\m@th#2$}%
    \setbox\myboxB\null
    \ht\myboxB=\ht\myboxA%
    \dp\myboxB=\dp\myboxA%
    \wd\myboxB=#1\wd\myboxA
    \sbox\myboxB{$\m@th\underline{\copy\myboxB}$}
    \setlength\mylenA{\the\wd\myboxA}
    \addtolength\mylenA{-\the\wd\myboxB}%
    \ifdim\wd\myboxB<\wd\myboxA%
       \rlap{\hskip 0.5\mylenA\usebox\myboxB}{\usebox\myboxA}%
    \else
        \hskip -0.5\mylenA\rlap{\usebox\myboxA}{\hskip 0.5\mylenA\usebox\myboxB}%
    \fi}
\newcommand{\overbar}[1]{\mkern 1.5mu\overline{\mkern-1.5mu#1\mkern-1.5mu}\mkern 1.5mu}
\renewcommand{\o}[1]{\xoverline{#1}}
\renewcommand{\u}[1]{\xunderline{#1}}
\newcommand*\bigcdot{\mathpalette\bigcdot@{.5}}
\newcommand*\bigcdot@[2]{\mathbin{\vcenter{\hbox{\scalebox{#2}{$\m@th#1\bullet$}}}}}
\def\op{\text{op}}
\def\Spec{\operatorname{Spec}}
\newcommand{\odd}{\operatorname{odd}}
\newcommand{\ev}{\operatorname{ev}}
\newcommand{\bC}{\mathbb{C}}
\newcommand{\bQ}{\mathbb{Q}}
\newcommand{\bZ}{\mathbb{Z}}
\newcommand\bs[1]{\boldsymbol #1}
\newcommand{\ds}{\displaystyle}
\renewcommand{\t}[1]{\widetilde{#1}}
\newcommand{\xra}[1]{\xrightarrow{#1}}
\def\onto{\twoheadrightarrow}
\newcommand{\Set}{\text{Set}}
\newcommand{\Group}{\text{Group}}
\newcommand{\Sym}{\operatorname{Sym}}
\newcommand{\colim}{\operatornamewithlimits{colim}}
\newcommand{\ad}{\operatorname{ad}}
\newcommand{\Hom}[3]{\operatorname{Hom}_{#1}(#2,#3)}
\newcommand{\Homd}[4]{\operatorname{Hom}_{#1}(#3,#4)_{#2}}
\newcommand{\cobar}[1]{\operatorname{Cob}#1}
\renewcommand{\bar}[1]{\operatorname{Bar} #1}
\renewcommand{\Bar}[1]{\operatorname{Bar} #1}
\newcommand{\Baru}[1]{\operatorname{Bar}_{\operatorname{u}} #1}
\newcommand{\dhom}{\delta_{\operatorname{Hom}}}
\newcommand{\Ai}{$\text{A}_{\infty}$}
\newcommand{\hhc}[2]{CC^{\bullet} (#1, #2)}
\newcommand{\hhcu}[2]{CC_{\mathsf{u}}^{\bullet} (#1, #2)}
\newcommand{\hhn}[3]{CC^{#1} (#2, #3)}
\newcommand{\hhcn}[3][n]{CC^{#1} (#2, #3)}
\newcommand{\hhca}[3]{CC^{\bullet}_{#1} (#2, #3)}
\newcommand{\Tco}[1]{T_{\mathsf{co}}(#1)}
\newcommand{\Tcou}[1]{T_{\mathsf{co,u}}(#1)}
\newcommand{\Coder}[1][]{\operatorname{Coder}_{#1}}
\newcommand{\Aut}{\operatorname{Aut}}
\newcommand{\End}{\operatorname{End}}
\newcommand{\Aidef}[1]{\operatorname{Def}_{#1}}
\newcommand{\Aifam}[1]{\operatorname{Fam}_{#1}}
\newcommand{\infAidef}[1]{\operatorname{infDef}_{#1}}
\newcommand{\suinfAidef}[1]{\operatorname{infDef}_{#1}^{\,\operatorname{su}}}
\newcommand{\auginfAidef}[1]{\operatorname{infDef}_{#1}^{\,\operatorname{aug}}}
\newcommand{\MCFunct}[1]{\operatorname{MC}_{#1}}
\newcommand{\MC}{\operatorname{MC}}
\newcommand{\Algk}[1][k]{\operatorname{ComAlg}_{#1}}
\newcommand{\Coalgk}[1][k]{\operatorname{Coalg}_{#1}}
\newcommand{\AugAlgk}[1][k]{\operatorname{ComAlg}_{#1}^{\operatorname{aug}}}
\newcommand{\AugCoalgk}[1][k]{\operatorname{CocomCoalg}_{#1}^{\operatorname{aug}}}
\newcommand{\dgAugCoalgk}[1][k]{\operatorname{dgCocomCoalg}_{#1}^{\operatorname{aug}}}
\newcommand{\dgLieAlgk}[1][k]{\operatorname{dgLie}_{#1}}
\newcommand{\augdgAlgk}[1][k]{\operatorname{dgAlg}_{#1}^{\operatorname{aug}}}
\newcommand{\augaiAlgk}[1][k]{\text{A}_{\infty}\operatorname{Alg}_{#1}^{\operatorname{aug}}}
\newcommand{\suaiAlgk}[1][k]{\text{A}_{\infty}\operatorname{Alg}_{#1}^{\operatorname{su}}}
\newcommand{\coaugdgCoalgk}[1][k]{\operatorname{dgCoalg}_{#1}^{\operatorname{aug}}}
\newcommand{\curvdgCoalgk}[1][k]{\operatorname{curv-dgCoalg}_{#1}^{\operatorname{aug}}}
\newcommand{\augdgCoalgk}[1][k]{\operatorname{dgCoalg}_{#1}^{\operatorname{aug}}}
\newcommand{\finAlgk}[1][k]{\operatorname{finComAlg}_{#1}^{\operatorname{aug}}}
\newcommand{\finCoalgk}[1][k]{\operatorname{finCocomCoalg}_{#1}^{\operatorname{aug}}}
\renewcommand{\ad}[1]{\operatorname{ad}(#1)}
\newcommand{\msu}{\mu_{\operatorname{su}}}
\newcommand{\msul}[1]{\mu_{\operatorname{su}}^{#1}}
\newcommand{\gsu}{g_{\operatorname{su}}}
\newcommand{\muniv}{\bs \nu_{\operatorname{univ}}}
\newcommand{\luniv}{l_{\operatorname{u}}}
\newcommand{\muend}{\mu_{\operatorname{End}}}
\newcommand{\sqr}{\operatorname{sq}}
\newcommandtwoopt{\hsp}[2][\alpha][\beta]{\,{}_{#1}\!\!*_{#2}}
\newcommand{\Ho}{\operatorname{Ho}}
\newcommand\newBoxDiagram [2][]{
  \pgfkeys{/box diagrams/.cd, make box = #2, #2/.cd, #1, /.cd}
  \edef \n@w{\noexpand \pgfoonew \csname#2\endcsname=new box
    diagram(#2)}
  \n@w
}
\newcommand\print [2][]{
  {\pgfkeys{/box diagrams/#2/.cd, #1}
   \def \n@w{\csname#2\endcsname} 
   \n@w.print()
  }
}
\newcommand\printNoArms [2][]{
  {\pgfkeys{/box diagrams/#2/.cd, #1}
   \def \n@w{\csname#2\endcsname} 
   \n@w.print box()
   \n@w.print output(0)
  }
}
\newcommand\printBox [2][]{
  {\pgfkeys{/box diagrams/#2/.cd, #1}
   \def \n@w{\csname#2\endcsname} 
   \n@w.print box()
  }
}
\newcommand\printArm[3][]{
  {\pgfkeys{/box diagrams/#2/.cd, #1}
   \def \n@w{\csname#2\endcsname} 
   \n@w.print arm(#3)
  }
}
\newcommand\printRightArm[3][]{
  {\pgfkeys{/box diagrams/#2/.cd, #1}
   \def \n@w{\csname#2\endcsname} 
   \n@w.print right arm(#3)
  }
}
\newcommand\printOutput[3][]{
  {\pgfkeys{/box diagrams/#2/.cd, #1}
   \def \n@w{\csname#2\endcsname} 
   \n@w.print output(#3)
  }
}
\newcommand\printLabelOnArm[4][]{
  {\pgfkeys{/box diagrams/#2/.cd, #1}
   \def \n@w{\csname#2\endcsname} 
   \n@w.print label above arm(#3, #4)
  }
}
\newcommand\printArmSep[2][]{
  {\pgfkeys{/box diagrams/#2/.cd, #1}
   \def \n@w{\csname#2\endcsname} 
   \n@w.print arm sep()
  }
}
\newcommand\printDec[3][]{
  {\pgfkeys{/box diagrams/#2/.cd, #1}
   \def \n@w{\csname#2\endcsname} 
   \n@w.decorate(#3)
  }
}
\newcommand\printComma[1]{
  \def \n@w{\csname#1\endcsname} 
   \n@w.print comma()}
\newcommand\printPeriod[1]{
  \def \n@w{\csname#1\endcsname} 
   \n@w.print period()}
    \ifdim\pgfoovalueof{box height}=0cm {} 
  \def\smallxshift{.1cm}
  \def\smallyshift{.05cm}
      \def\smallxshift{.1cm}
      \def\smallyshift{.05cm}
\numberwithin{equation}{thm}
  \tikzset{
  edge node/.code={%
      \expandafter\def\expandafter\tikz@tonodes\expandafter{\tikz@tonodes #1}}}
\tikzset{
  subseteq/.style={
    draw=none,
    edge node={node [sloped, allow upside down, auto=false]{$\subseteq$}}},
  Subseteq/.style={
    draw=none,
    every to/.append style={
      edge node={node [sloped, allow upside down, auto=false]{$\subseteq$}}}
  }
}
\begin{document}
\title{Strictly Unital \Ai-algebras} \author{Jesse Burke}
\address{Mathematical Sciences Institute\\ Australian National University\\ Canberra ACT}
\email{jesse.burke@anu.edu.au}
\begin{abstract}
Given a graded module over a commutative ring, we define a dg-Lie
algebra whose Maurer-Cartan elements are the strictly unital
\Ai-algebra structures on that module. We use this to generalize
Positselski's result that a curvature term on the bar
construction compensates for a lack of augmentation, from a field to arbitrary commutative base ring. We also use this
 to show that the reduced Hochschild cochains control the strictly
 unital deformation functor. We motivate these results by giving a full development of the
deformation theory of a nonunital \Ai-algebra. 
\end{abstract}

\maketitle
\setcounter{section}{-1}

\tikzset{every picture/.append style={scale=0.75}, every node/.style={transform shape}}

\tikzset{baseline={(current bounding box.center)}}
\pgfkeys{unit/.style = {color = black!20!red, arm style = densely
    dashed,}}
\pgfkeys{unit output/.style = {output color = black!20!red, output style =
    dashed},}
\pgfkeys{decorate/.style = {color = gray}, }
\pgfkeys{fake decorate/.style = {color = white}, }
\pgfkeys{module/.style = {right arm color = black!20!orange, right arm
    decoration = snake,output color = black!20!orange, output decoration =
    snake},} 
\pgfkeys{secmodule/.style = {right arm color = black!60!green,
    right arm decoration = snake,output color = black!60!green, output
    decoration = {snake}, },} 
\pgfkeys{thirdmodule/.style = {right arm color = purple,
    right arm decoration = snake,output color = purple, output
    decoration = snake},} 
\pgfkeys{modulemap/.style = {right arm color
    = black!20!orange, right arm decoration = snake,output color = black!60!green,
    output decoration = snake},}
\pgfkeys{secmodulemap/.style = {right arm color
    = green, right arm decoration = snake,output color = purple,
    output decoration = snake},}
\pgfkeys{end output/.style = {output color = black!20!red,
    output style = densely dash dot},} 
\pgfkeys{hom output/.style = {output color = black!20!blue,
    output decoration = bumps, arm dec amp = 2},} 
\pgfkeys{parameter/.style = {output color = black!20!red, color = black!20!red, arm decoration = coil, arm dec
    amp = 2, arm dec seg length = .1cm}}
\pgfkeys{secparameter/.style = {output color = black!20!green, arm decoration = coil, arm dec
    amp = 2, arm dec seg length = .1cm},}
\pgfkeys{bmod/.style = {output color = blue, output style = dotted, output width = .5mm}}

\tikzset{unit/.style = {color = black!20!red, densely dashed, decorate, very
  thick}}
\tikzset{module/.style = {color = black!20!orange, decorate,
    decoration = snake, very thick}} 
\tikzset{secmodule/.style =
  {color =black!60!green, decorate, decoration = snake, very thick}}
\tikzset{thirdmodule/.style =
  {color = purple, decorate, decoration = snake, very thick}}
\tikzset{end output/.style = {color = black!20!red, decorate,
    densely dash dot, very thick}} 
\tikzset{hom output/.style = {color = black!20!blue, decorate, decoration = bumps, very thick}} 
\tikzset{parameter/.style = {color = black!20!red, thick, decorate, decoration = {coil,
      amplitude = .07cm,  segment length = .1cm}}}
\tikzset{secparameter/.style = {color = black!20!green, thick, decorate,
    decoration = {coil, dashed,
      amplitude = .07cm,  segment length = .1cm}}}
\tikzset{bmod/.style = {color = blue, dotted, line width = 2mm, very thick}}


\section{Introduction}
The bar and cobar constructions are an adjoint pair of functors
between the categories of augmented differential graded (dg) algebras and 
coaugmented dg coalgebras, both defined over a fixed commutative
ring $k$,
\begin{align*}
&\xymatrix{\augdgAlgk \ar@<-1.2ex>[rr]_{\Bar {}} &&
 \coaugdgCoalgk. \ar@<-1.2ex>[ll]_{\cobar{}}}\\
\intertext{This adjoint pair is the algebraic analogue of the
classifying space and Moore loop space adjoint pair between topological monoids
and based topological spaces.
Analogous to the situation in topology, the bar and cobar functors
                                                              are
                                                              decidedly
                                                              nontrivial:
                                                              the unit
                                                              of the
                                                              adjunction
                                                              is a
                                                              homotopy
                                                              equivalence. This
                                                              non-triviality
                                                              is
                                                              at the
                                                              root of their
                                                                                                                            usefulness
                                                                                                                           in
                                                              algebra. 
                                                              In
particular, the bar construction gives
canonical resolutions, of both modules and bimodules, and plays a
large role in infinitesimal deformation theory.
\endgraf
Stasheff, in \cite{MR0158400}, relaxed the assumption of
associativity in a topological monoid to define an \Ai-space, using a generalized notion of
classifying space. He then showed that a connected topological space has the homotopy type of a
loop space exactly when it is an \Ai-space (see also \cite[Chapter
2]{MR505692}). The algebraic analogue of a connected
\Ai-space is an augmented \Ai-algebra \cite{MR0158400part2},
                                                            generalizing
                                                            an augmented dg-algebra.
An augmented \Ai-algebra is an augmented complex $(A, m^{1})$ and a sequence
                                                            of
                                                            augmented
                                                            maps
                                                            $m^{n}:
                                                            A^{\otimes
                                                            n} \to A,$ 
where $m^{2}$ satisfies the Leibniz rule with respect to $m^{1},$ $m^{3}$ is a nullhomotopy for the associator of $m^{2}$, and
more generally, the $m^{n}$ satisfy the
                                                            quadratic equations necessary for a bar construction,
                                                            giving the following diagram:}
   &\xymatrix{\augaiAlgk \ar@<-1.2ex>[rr]_{\Bar {}} &&
 \coaugdgCoalgk. \ar@<-1.2ex>[ll]_{\cobar{}}}
\end{align*}
Every \Ai-algebra is homotopy equivalent to a dg-algebra, so nothing is gained at
the homotopy level by enlarging the category of dg-algebras, but often
there are dramatically smaller \Ai-versions 
smaller than the dg-models, e.g., the cochains of the classifying space of a finite
group; see \cite[\S 6]{MR2844537}.

The augmentation assumption plays a vital but subtle role in the
nontriviality of the above functors. Indeed, by bar construction of an augmented
dg-algebra
$\epsilon: A \to k,$ we mean the bar construction
applied to the nonunital algebra $\ker \epsilon$. The bar
construction of
a unital dg-algebra is homotopy equivalent to the trivial coalgebra, destroying the ``homotopy type'' of the unital
dg-algebra. In particular, the resolutions traditionally constructed
using the bar construction, will not necessarily be resolutions if one
doesn't kill the unit.  Augmented
algebras are exactly those we can do this to, without
losing information. All of this remains true for augmented versus strictly unital
\Ai-algebras, summarized in the following diagram:
\begin{displaymath}
  \begin{tikzpicture}
  \matrix (m) [matrix of math nodes,row sep=3em,column sep=4em,minimum width=2em]{
    \suaiAlgk & {} \\
    \augaiAlgk & \augdgCoalgk. \\
  };
 \path[-stealth, auto, scale = 2] (m-2-1) edge [subseteq]                  (m-1-1);
  \path[-stealth, auto]     (m-2-1) edge node[swap] {$\bar$} (m-2-2)
                        (m-1-1) edge node[auto = false, draw, minimum size = 20pt,
                        dashed, cross out, red, thick]{} (m-2-2);
\end{tikzpicture}
\end{displaymath}
Positselski had the insight 
that the right side of the diagram can be extended to \emph{curved
dg-coalgebras}. He showed how to construct, for a strictly unital, but not
necessarily augmented, \Ai-algebra, a curved bar construction, killing
the unit and transferring the potentially lost information
to a curvature term \cite{MR1250981,
  MR2830562}, giving the following diagram:
\begin{displaymath}
  \begin{tikzpicture}
  \matrix (m) [matrix of math nodes,row sep=3em,column sep=4em,minimum width=2em]{
    \suaiAlgk & \curvdgCoalgk \\
    \augaiAlgk & \augdgCoalgk. \\
  };
  \path[-stealth, auto, scale = 2] (m-2-1) edge [subseteq]                  (m-1-1)
  (m-2-2) edge [subseteq]                  (m-1-2);
    \path[-stealth, auto]          (m-2-1) edge node[swap] {$\bar$} (m-2-2)
                        (m-1-1) edge node {$\bar$} (m-1-2);
\end{tikzpicture}
\end{displaymath}
He proved analogous results for \Ai-morphisms and \Ai-modules, and also stated a strong converse: the curved
bar construction characterizes strictly unital \Ai-algebras
(and morphisms, and representations).

The fundamental idea that a curvature term compensates for lack of
augmentation is not particularly emphasized in the long paper
\cite{MR2830562} (a paper that contains many new and powerful ideas),
full details of
the proofs are not given, and, most importantly for us, the ground ring is assumed to be a
field. In this paper, we give careful proofs of Positselski's results,
valid for an arbitrary commutative ground ring (in fact with a few small
adjustments, noted in remarks, the results hold when
replacing modules over a commutative ring with any symmetric monoidal
category with countable coproducts, where finite coproducts are also
finite products). Positselski also showed that the bar construction is
homotopically non-trivial, and this opens the door to using it for the
construction of resolutions. We do not pursue the generalization from
a field to arbitrary base ring here, but hope to return to it in the future.

The proofs in this paper use a characterization of
strictly unital \Ai-algebra structures as Maurer-Cartan elements of a
certain dg-Lie algebra (the coassociative analogue of a construction used by Schlessinger and Stasheff for Lie coalgebras
\cite{SchlStash}). We also use this characterization to show the dg-Lie algebra of reduced Hochschild
cochains controls the
strictly unital infinitesimal deformations of the corresponding
\Ai-algebra. As motivation and context for using Maurer-Cartan
elements of a dg-Lie algebra, we include a detailed discussion of the
deformation theory of nonunital \Ai-algebras via the dg-Lie algebra
of Hochschild cochains. We also prove linear analogues of all
of the above results. In particular, we recover, and generalize to arbitrary commutative base
ring, Positselski's result that strictly unital modules correspond
functorially to cofree curved dg-comodules over the curved bar
construction.

There has been considerable further work developing Positselski's
ideas, especially for operads
\cite{MR2993002,DerKosDuality,1403.3644,1612.02254}, see also \cite{MR3597150}. There has also been much work
on homotopy, or weak, units in an \Ai-algebra; see
\cite{MR2596638,MR2769322,MR3176644} and the references contained
there. An \Ai-automorphism does not necessarily preserve a strict
unit, but it does preserve a homotopy unit (one can take for the
definition of homotopy unit that there is an automorphism that takes it to
a strict unit). Positselski's idea on curvature gives a way of
maintaining a strict unit through certain processes, e.g., transfer of
\Ai-structure, rather than working in the larger category of homotopy unital
\Ai-algebras.

Finally, let us mention one motivation for this paper. In
\cite{1508.03782} we study projective resolutions of modules over a
commutative ring $R = Q/I$ by putting $Q$-linear strictly unital \Ai-structures
on $Q$-projective resolutions of $R$ and its modules. (This example emphasizes the importance of working with
an arbitrary commutative base ring.) In particular, we show that minimality of \Ai-structures
characterizes Golod singuliarities, and the bar construction can then
be used to construct the minimal free resolution of every module over
a Golod ring. To work effectively with different classes of singularities, e.g., complete intersections, a
relative Koszul duality (relative to $Q$) is needed. We hope to develop this in future work. Throughout this paper we give a sequence of running
examples illustrating the elementary, but
interesting, example of the Koszul complex on a single element $f$ of
the ground ring $k,$ where e.g., if
$k = \bC[x_{1}, \ldots, x_n]$, then we are studying the zero set of
$f$ relative to $\bC^{n}.$

I would like to thank the referee for his or her careful reading and very
helpful comments that improved the exposition of the paper.

\section{Notation and conventions}\label{sect:notation}
\begin{enumerate}
\item Throughout, $k$ is a fixed commutative ring. By module, complex, map,
  etc.\ we mean $k$-module, complex of $k$-modules, $k$-linear map, etc.  We place no boundedness or
connectedness assumptions on complexes. For graded modules
  $M,N,$ define graded modules $\Hom {} M N$ and $M \otimes N$ by
  \begin{displaymath}
    \Hom {} M N_n = \prod_{i \in \bZ} \Hom {} {M_i} {N_{i + n}} \quad (M
    \otimes N)_n = {\bigoplus_{i \in \bZ} M_i \otimes N_{n - i}}.
  \end{displaymath}
  If $(M, \delta_{M})$ and $(N, \delta_{N})$ are complexes, then
  $\Hom {} M N$ and $M \otimes N$ are complexes with differentials
  \begin{displaymath}
    \dhom(f) = \delta_{N}f - (-1)^{|f|}f \delta_{M} \quad \delta_{\otimes} = \delta_M \otimes 1 + 1 \otimes \delta_N.
  \end{displaymath}
  A morphism of complexes is a degree 0 cycle of the complex
  $(\Hom {} M N, \dhom).$

\item All elements of graded objects are assumed to be homogeneous. We
  write $|x|$ for the degree of an element $x$. If $M$ is a graded
  module, $\Pi M$ is the graded module with $(\Pi M)_n = M_{n -1}$. Set
  $s \in \Hom {} M {\Pi M}_{1}$ to be the identity map. For $x\in M,$
  we set $[x] = s(x) \in \Pi M$ and more generally
  $[x_1|\ldots|x_n] = sx_{1}\otimes \ldots\otimes s x_n.$ If
  $(M, \delta_{M})$ is a complex, set
  $\delta_{\Pi M} = -s\delta_M s^{-1}$. Then
  $s: (M, \delta_{M}) \to (\Pi M, \delta_{\Pi M})$ is a cycle in $(\Hom {} {\Pi M} M, \dhom).$

\item We use the sign conventions that when $x,y$ are permuted, a
  factor of
  $(-1)^{|x||y|}$ is introduced, and when applying a tensor product of
  morphisms, we have
  $(f \otimes g)(x \otimes y) = (-1)^{|g||x|}f(x) \otimes g(y)$.

\item For complexes $M$ and $N$, the maps
  $(\Pi M) \otimes N \to \Pi (M \otimes N), [m] \otimes n \mapsto [m
  \otimes n]$ and
  $M \otimes (\Pi N) \to \Pi (M \otimes N),m \otimes [n] \mapsto
  (-1)^{|m|}[m \otimes n]$
  are isomorphisms of complexes, as are the maps
  $\Pi \Hom {} M N \to \Hom {} {\Pi^{-1} M} N, [f] \mapsto
  (-1)^{|f|}fs$ and
  $\Pi \Hom {} M N \to \Hom {} M {\Pi N}, [f] \mapsto sf$.

\item String diagrams are used to represent morphisms between
  tensor products of graded modules. A line represents a graded
  module, parallel lines represent a tensor product of graded modules, and a rectangle
  represents a morphism. Lines may be decorated to
  distinguish graded modules, for instance, let
  $\tikz[baseline={([yshift = -.1cm]current bounding
    box.center)}]{\draw[thick] (0,0) -- (0,.5)}$ represent a graded module $A$ and
  $\begin{tikzpicture}[baseline={([yshift = -.1cm]current bounding
      box.center)}]\draw[bmod, line width = .5mm] (0,0) --
    (0,.5);\end{tikzpicture}$ represent a graded module $B$. Then, e.g., $
  \begin{tikzpicture}
    \newBoxDiagram [arm height = .5cm, width = .25cm, decoration
    yshift label = .05cm, box height = .5cm, label = {\tiny{f}}]{f};
    \print[/bmod]{f}; \printArm{f}{0};
  \end{tikzpicture}$ represents a morphism $f: A^{\otimes 3} \to B.$
  The utility of the diagrams becomes apparent when
  composing such morphisms.
\item For unexplained conventions or definitions related to
  differential graded Lie algebras, see \cite{MR0195995}, and for
  graded coalgebras, see Chapters 1 and 5 of \cite{MR1243637}.
\end{enumerate}

\section{Nonunital \Ai-algebras}
In this section we recall the definitions of nonunital \Ai-algebra, \Ai-morphism, bar construction, and dg-Lie algebra of
Hochschild cochains. We use the approach of Proute \cite{MR2844537}
and Getzler
\cite{MR1261901}, and the string diagram notation of Hinich
\cite{MR1978336}. Fix graded modules $A, B$ throughout the section. In string diagrams $\tikz[baseline={([yshift = -.1cm]current bounding
  box.center)}]{\draw[thick] (0,0) -- (0,.5)}$ will denote $\Pi A$ and $\tikz[baseline={([yshift = -.1cm]current bounding
  box.center)}]{\draw[bmod, line width = .5mm] (0,0) -- (0,.5)}$ will denote $\Pi B.$

\begin{defn}\label{defn:bar_and_cc}
  Set
  $\hhn n A B = \Hom {} {(\Pi A)^{\otimes n}} {\Pi B},$ and
  \begin{align*}
    \hhc A B & = \prod_{n \geq 1} \Hom {} {(\Pi A)^{\otimes n}}
               {\Pi B}.
  \end{align*}
  We write $f = (f^n) \in \hhc A B$ with $f^n \in \hhn n A B$ and call
  $f^n$ the $n$th tensor homogeneous component.

Since $A$ and $B$ are graded,
so is $\hhc A B$, using \ref{sect:notation}.(1). The
$i$th homogeneous component of this grading is denoted $\hhc A B_{i}$.
\end{defn}

The module $\hhc A A$ has a very intricate algebraic structure. In
particular it is a graded Lie algebra under the commutator of the
following.
\begin{defn}
 The \emph{Gerstenhaber product} of $g = (g^n) \in \hhc A B_{i}$ and $f = (f^{n}) \in \hhc A
  A_{j}$, denoted $g \circ f \in \hhc A B_{i+j},$ has $n$th tensor homogeneous component given by the following:
  \begin{align*}\label{def:gerst_prod}
    (g \circ f)^n
                  &= \sum_{i=1}^{n} \sum_{j=0}^{i-1} \hspace{.2cm}
                    \begin{tikzpicture}[baseline={(current bounding box.center)}]
                      \def\w{1.5cm}
                      \def\hgt{.55cm}
                      \def\highPos{-.1}
                      \def\relw{.5}
                      \def\sepw{.1}
                      \def\smallsep{.08}
                      \def\modsep{.35}
                      \pgfkeys{lowBoxStyle/.style = {box height =
                          \hgt, output height = \hgt, width = \w,
                          coord =
                          {($(0,0)$)}, decoration yshift label = .2cm,
                          decoration yshift label = .05cm, arm height
                          = 3*\hgt, label = g^{i}}}
                      \newBoxDiagram [/lowBoxStyle]{lowBox};
                      {\print[/bmod]{lowBox};}
%
                      \lowBox.coord at top(highcoor, \highPos);
                      \pgfkeys{highBoxStyle/.style = {width =
                          \w*\relw, box height = \hgt, arm height =
                          \hgt, output height = \hgt, coord =
                          {(highcoor)}, output height = \hgt, label =
                          {f^{n-i+1}}}}
                      \newBoxDiagram[/highBoxStyle]{highBox};
                      \print{highBox}; \printArmSep{highBox};
%
                      \pgfmathsetmacro{\var}{\highPos - \relw - \sepw}
                      \printArm{lowBox}{\var}; \printDec[/decorate,
                      decorate left = -1, decorate right =
                      \var]{lowBox}{j};
                      \pgfmathsetmacro{\var}{-1+\smallsep}
                      \pgfmathsetmacro{\vara}{\highPos - \relw - \sepw
                        - \smallsep} \printArmSep[arm sep left = \var,
                      arm sep right = \vara]{lowBox}
                      \pgfmathsetmacro{\var}{\highPos + \relw + \sepw}
                      \printArm{lowBox}{\var} \printDec[/decorate,
                      decorate left = \var, decorate right =
                      1]{lowBox}{ i - j-1}
                      \pgfmathsetmacro{\varb}{1-\smallsep}
                      \pgfmathsetmacro{\vara}{\var+\smallsep}
                      \printArmSep[arm sep left = \vara, arm sep right
                      = \varb]{lowBox}
                                          \end{tikzpicture}
                    = \scalebox{.9}{$\displaystyle \sum_{i=1}^{n} \sum_{j=0}^{i-1} g^{i}(1^{\otimes j} \otimes f^{n - i + 1} \otimes 1^{\otimes i - j - 1}).$}
  \end{align*}
\end{defn}

This was defined in \cite{MR0161898} where it was shown to be a
pre-Lie algebra structure (Corollary to Theorem 2,
applied to Example 5.5)\footnote{A \emph{pre-Lie algebra structure}
  on a graded module $G$ is a degree zero morphism
  $\circ : G \otimes G \to G$ such that for all $f, g, h \in G$, the
  following equation holds,
  \[ (f \circ g) \circ h - f \circ (g \circ h) = (-1)^{|g||h|} \left (
      (f \circ h) \circ g - f \circ (h \circ g) \right ),\] i.e., the
  associator is symmetric in the last two values. In particular, every
  associative algebra is a pre-Lie algebra, since the
  associator is zero. See e.g., \cite[\S 1.4]{MR2954392} for more
  information.}, and by \cite[Theorem 1]{MR0161898},
the commutator of any pre-Lie algebra, defined to be
$[x,y] = x \circ y - (-1)^{|x||y|} y \circ x$, is a graded Lie algebra.

 Using string diagrams, it
  is a relatively easy exercise to show $\hhc A A$ is a pre-Lie algebra
  (see \cite[p. 20, Figure 1]{KellerDefTheory} for
  details). Performing this exercise, one will see that
  care must be taken with signs and string diagrams. The conventions
  we use
  for signs and string diagrams are formalized below
  (we encourage the reader to skip ahead and return when a sign issue
  occurs).
\begin{rem}\label{rem:signs_for_diags} 
  \mbox{}\\
  \begin{enumerate}
  \item Morphisms will always be grouped into horizontal lines,
    i.e., the projections of any two boxes onto the left side of the page are either
    disjoint or equal.
    
\item If all morphisms are on the same line,
    we visualize inputs feeding into the diagram from the
    right, along the front, and use sign convention \ref{sect:notation}.(3), e.g.,
    \begin{displaymath}
\begin{tikzpicture}[baseline={([yshift = -.3cm]current bounding
          box.center)}]
        \def\sepw{.2cm} \def\w{.5cm}
        \def\rightw{.4cm}  \def\hgt{.5cm}
        \newBoxDiagram [arm height = 2*\hgt, box
        height = \hgt, output height = \hgt, width = \w, coord =
        {($(\sepw + \w,
          0)$)}, decoration yshift label = .2cm, label = f^{n}]{f};
        \newBoxDiagram
        [box height = 0cm, output height = 0cm, arm height =4*\hgt,
        width = \rightw, coord = {( 2*\sepw+2*\w + \rightw,
          0)}, decoration yshift label = .2cm]{right};
\print{f}; \printArmSep{f};
        \print{right}; \printArmSep{right};
        \printDec[/decorate]{right}{l}; \newBoxDiagram [arm height =
        2*\hgt, box height = \hgt, output height = \hgt, width = 1.3*\w,
        coord = {($(3*\sepw + 3.3*\w + 2*\rightw,
          0)$)}, decoration yshift label = .2cm, label = g^{m}]{g};
        \print{g}; \printArmSep{g};
        \g.coord at top(coor, 1);
        \begin{scope}[thick, color = black!40!white, decoration={
    markings,
    mark=at position 0.55 with {\arrow{>}}}]
  \draw[postaction = {decorate}] ($(coor) + (\sepw, 1.8*\hgt)$) arc (10:150:2.1 and -0.2);
\end{scope}
\end{tikzpicture} \, (\bs x \otimes \bs y \otimes \bs z) = (-1)^{|\bs
  x||g| + |\bs y||g|} f^{n}(\bs x) \otimes \bs y \otimes g^{m}(\bs z),
\end{displaymath}
where $\bs x = x_{1} \otimes \ldots \otimes x_{n}, \bs y = y_{1}
\otimes \ldots \otimes y_{l}, \bs z = z_{1} \otimes \ldots \otimes z_{m}.$
    
\item If there are multiple lines of morphisms, we visualize the
  output from each line
  coming out behind the diagram,
  needing to be twisted around to the front, where the next line of
  morphisms is applied as in Step 2; e.g.,
\begin{displaymath}
\begin{tikzpicture}[baseline={([yshift = -.3cm]current bounding
          box.center)}]
        \def\sepw{.2cm} \def\w{.5cm}
        \def\rightw{.4cm}  \def\hgt{.55cm}
        \newBoxDiagram [arm height = 2*\hgt, box
        height = \hgt, output height = 4*\hgt, width = \w, coord =
        {($(\sepw + \w,
          0)$)}, decoration yshift label = .2cm, label = f^{n}]{f};
        \newBoxDiagram
        [box height = \hgt, output height = \hgt, arm height =5*\hgt,
        width = \rightw, coord = {( 2*\sepw+2*\w + \rightw,
          0)}, decoration yshift label = .2cm, label = h^{l}]{right};
\print{f}; \printArmSep{f};
        \print{right}; \printArmSep{right};
        \newBoxDiagram [arm height =
        2*\hgt, box height = \hgt, output height = 4*\hgt, width = 1.3*\w,
        coord = {($(3*\sepw + 3.3*\w + 2*\rightw,
          0)$)}, decoration yshift label = .2cm, label = g^{m}]{g};
        \print{g}; \printArmSep{g};
        \g.coord at top(coor, 1);
                \begin{scope}[thick, color = black!40!white, decoration={
    markings,
    mark=at position 0.55 with {\arrow{>}}}]
  \draw[postaction = {decorate}] ($(coor) + (\sepw, 1.8*\hgt)$) arc (10:150:2.1 and -0.2);
  \end{scope}
\begin{scope}[thick, color = black!40!white, decoration={
    markings,
    mark=at position 0.65 with {\arrow{<}}}]
    \draw [postaction = {decorate}, dashed] ($(coor) + (\sepw, -2*\hgt)$) arc (0:140:2.1 and 0.25);
  \end{scope}
        \begin{scope}[thick, color = black!40!white, decoration={
    markings,
    mark=at position 0.565 with {\arrow{>}}}]
  \draw[postaction = {decorate}] ($(coor) + (\sepw, -2*\hgt)$) arc (0:140:2.1 and -0.25);
  \end{scope}
      \end{tikzpicture}  \, (\bs x \otimes \bs y \otimes \bs z) = (-1)^{|\bs
  x||g| + |\bs y||g|+(|\bs x| + |f|)|h|} f^{n}(\bs x) \otimes h^{l}(\bs y) \otimes g^{m}(\bs z).
\end{displaymath}
Moving the line a morphism is on only changes the diagram by a
sign. Sign rules for vertical moves of a morphism are:
  \item up one line: multiply by $-1$ to the
    degree of the morphism times the sum of the degrees of the morphims
    \emph{to the left on the new line.}
  \item down one line: multiply by $-1$ to
    the degree of the morphism times the sum of the degrees of the morphisms
    \emph{to the right on the new line.}
  \end{enumerate}
  In particular, we have
  \begin{displaymath}
  \begin{tikzpicture}
      \def\sep{.2cm} \newBoxDiagram [output height = 1.5cm] {f};
      \newBoxDiagram [coord = {(1cm + \sep, 0)}, arm height = 1.75cm]
      {g}; \print{f}; \printArmSep{f}; \print{g};
      \printArmSep{g};
    \end{tikzpicture} = (-1)^{|f||g|} \hspace{.1cm}
    \begin{tikzpicture}
      \def\sep{.2cm} \newBoxDiagram {f}; \newBoxDiagram [coord = {(1cm
        + \sep, 0)}] {g}; \print{f}; \printArmSep{f}; \print{g};
      \printArmSep{g};
    \end{tikzpicture}
    = (-1)^{|f||g|} \hspace{.1cm}
    \begin{tikzpicture}
      \def\sep{.2cm} \newBoxDiagram [arm height = 1.75cm] {f};
      \newBoxDiagram [coord = {(1cm + \sep, 0)}, output height = 1.5cm]
      {g}; \print{f}; \printArmSep{f}; \print{g};
      \printArmSep{g};
      \printPeriod{g};
    \end{tikzpicture}
  \end{displaymath}
\end{rem}

\begin{defn}
 The \emph{nonunital tensor coalgebra
    on a graded module $V$} is $\Tco V = \bigoplus_{n \geq 1} V^{\otimes
    n}$ with
  comultiplication the linear extension of
  \[ \Delta( v_1 \otimes \ldots \otimes v_n ) = \sum_{i = 1}^{n-1} (v_1
    \otimes \ldots \otimes v_i) \otimes (v_{i+1} \otimes \ldots
    \otimes v_n).\] Note that
  $\hhc A B = \Hom {} {\Tco {\Pi A}} {\Pi B}.$

A \emph{graded coderivation} of a graded coalgebra $C$ with comultiplication
  $\Delta$ is a homogeneous
  endomorphism $d$ of $C$ such that
  $(d \otimes 1 + 1 \otimes d) \Delta = \Delta d$. We write
  $\Coder(C,C)$ for the set of coderivations. This is a graded Lie
  subalgebra of the commutator bracket on $\Hom {} C C$.
\end{defn}

\begin{lem}\label{lem:isom_hhc_coder}
The tensor coalgebra satisfies the following universal properties.
  \begin{enumerate}
  \item The canonical projection $\pi_1: \Tco {\Pi A} \to \Pi A$
    induces an isomorphism,
    \[ \Phi = (\pi_1)_*: {\Coder}( \Tco {\Pi A},
      \Tco {\Pi A}) \xra{\cong} \Hom {} {\Tco {\Pi A}} {\Pi A} = \hhc
      A A.\] This is an isomorphism of graded Lie algebras, where the
    bracket on the source is the commutator, and the bracket on the
    target is the Gerstenhaber bracket. The inverse applied to
    $f =(f^n) \in \hhc A A$ is given by
    \begin{align*}
      \pi_{n-i+1} \Phi^{-1}(f)|_{(\Pi A)^{\otimes n}}
                                           &\begin{tikzpicture}[inner sep =0mm]
                                                      \def\sepw{.2cm}
                                                      \def\leftw{.4cm}
                                                      \def\w{.5cm}
                                                      \def\rightw{.6cm}
                                                      \node at
                                                      (-1.3,.8)
                                                      {$=
                                                       \displaystyle
                                                        \sum_{j
                                                          =0}^{n-i}$};
                                                      \newBoxDiagram
                                                      [box height =
                                                      0cm, output
                                                      height = 0cm,
                                                      arm height =
                                                      1.5cm, width =
                                                      \leftw,
                                                      decoration
                                                      yshift label =
                                                      .2cm]{left};
                                                      \newBoxDiagram
                                                      [arm height =
                                                      .5cm, width =
                                                      \w, coord =
                                                      {($(\sepw +
                                                        \leftw + \w,
                                                        0)$)},
                                                      decoration
                                                      yshift label =
                                                      .2cm, label =
                                                      f^i]{f};
                                                      \newBoxDiagram
                                                      [box height =
                                                      0cm, output
                                                      height = 0cm,
                                                      arm height =
                                                      1.5cm, width =
                                                      \rightw, coord =
                                                      {(\leftw +
                                                        2*\sepw+2*\w +
                                                        \rightw, 0)},
                                                      decoration
                                                      yshift label =
                                                      .2cm]{right};
                                                      \print{left};
                                                      \printArmSep{left};
                                                      \printDec[/decorate]{left}{j};
                                                      \print{f};
                                                      \printArmSep{f};
                                                      \print{right};
                                                      \printArmSep{right};
                                                \printDec[/decorate]{right}{n-i-j};
                                                      \printPeriod{right};
                                                    \end{tikzpicture}
    \end{align*}

  \item The canonical projection $\pi_1: \Tco {\Pi B} \to \Pi B$
    induces an isomorphism,
    \[ \Psi = (\pi_1)_*: \Hom {\Coalgk} {\Tco {\Pi A}} {\Tco {\Pi B}}
      \xra{\cong}\hhc A B_{0}.\]
    The inverse applied to $g =(g^n) \in \hhc A B_{0}$ is
    given by:
    \[ \pi_k \Psi^{-1}(g)|_{(\Pi A)^{\otimes n}} = \sum_{i_1 + \ldots +
        i_k = n} \hspace{.1cm}
      \begin{tikzpicture}
        \def\wida{.4cm} \def\widb{.35cm} \def\widk{.45cm}
        \def\dotsw{.45cm} \def\sepw{.1cm} \def\hgt{.43cm}
        \def\boxht{.54cm} \newBoxDiagram[width = \wida, box height =
        \boxht, arm height = \hgt, output height = \hgt, label =
        g^{i_1}]{g1} \print[/bmod]{g1} \printArmSep{g1}
        \newBoxDiagram[width = \widb, box height = \boxht, arm height
        = \hgt, output height = \hgt, coord =
        {($(\wida+\sepw+\widb,
          0)$)}, label = g^{i_2}]{g2} \print[/bmod]{g2} \printArmSep{g2}
        \node at
        ($(\wida + \sepw + 2*\widb + \dotsw, \hgt +
        \boxht/2)$) {\ldots}; \newBoxDiagram[width = \widk, box height
        = \boxht, arm height = \hgt, output height = \hgt, coord =
        {($(\wida+\sepw+2*\widb+2*\dotsw+\widk,
          0)$)}, label = g^{i_k}]{gk};\print[/bmod]{gk}; \printArmSep{gk};
        \printPeriod{gk};
      \end{tikzpicture}\]
  \end{enumerate}
\end{lem}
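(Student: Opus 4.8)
The plan is to treat both statements as manifestations of the cofreeness of the tensor coalgebra, extracting the higher tensor components of a coderivation or coalgebra morphism from its composite with $\pi_1$ by means of the iterated comultiplication. Write $\Dn{k-1}\colon \Tco{\Pi A} \to (\Tco{\Pi A})^{\otimes k}$ for the $(k-1)$-fold iterate of $\Delta$; on $(\Pi A)^{\otimes n}$ it is the sum, over all decompositions $i_1 + \cdots + i_k = n$ with each $i_j \geq 1$, of the corresponding deconcatenation. The one identity that drives everything is $\pi_1^{\otimes k}\,\Dn{k-1} = \pi_k$: postcomposing the iterated coproduct with the projection onto each singleton tensor factor selects exactly the component landing in $(\Pi A)^{\otimes k}$, since a decomposition into $k$ length-one blocks exists only when $n = k$. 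I would establish this identity first, directly from the definition of $\Delta$, and then use it in both parts.

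For (2), I would use that a coalgebra map $F$ satisfies $\Delta F = (F \otimes F)\Delta$ and, by induction, $\Dn{k-1} F = F^{\otimes k}\,\Dn{k-1}$. Postcomposing with $\pi_1^{\otimes k}$ and invoking the identity above gives $\pi_k F = (\pi_1 F)^{\otimes k}\,\Dn{k-1} = g^{\otimes k}\,\Dn{k-1}$ with $g = \pi_1 F$; expanding $\Dn{k-1}$ on $(\Pi A)^{\otimes n}$ reproduces the displayed formula and shows $F$ is determined by $g$, so $\Psi$ is injective. For surjectivity I would take the displayed formula as the definition of a map $F$ and check that it is comultiplicative; by the same singleton identity this reduces to coassociativity of the deconcatenation coproduct. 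The genuine work here is keeping the Koszul signs straight, which I would carry out with the string-diagram conventions of Remark \ref{rem:signs_for_diags}.

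Part (1) is parallel, with the co-Leibniz rule $\Delta D = (D \otimes 1 + 1 \otimes D)\Delta$ iterating to $\Dn{k-1} D = \sum_{l=0}^{k-1}(1^{\otimes l} \otimes D \otimes 1^{\otimes k-1-l})\,\Dn{k-1}$; postcomposing with $\pi_1^{\otimes k}$ gives $\pi_k D = \sum_{l}(\pi_1^{\otimes l} \otimes f \otimes \pi_1^{\otimes k-1-l})\,\Dn{k-1}$ with $f = \pi_1 D$, which on $(\Pi A)^{\otimes n}$ collapses to the displayed sum $\sum_{j} 1^{\otimes j} \otimes f^i \otimes 1^{\otimes n-i-j}$, yielding both injectivity and the inverse formula; surjectivity is the verification that this formula defines a coderivation. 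It then remains to identify the bracket. Since $\Coder(\Tco{\Pi A}, \Tco{\Pi A})$ is a Lie subalgebra of $(\Hom{}{\Tco{\Pi A}}{\Tco{\Pi A}}, [-,-])$ and $\Phi$ is linear and bijective, it suffices to show $\pi_1(D_1 D_2) = \Phi(D_1) \circ \Phi(D_2)$; feeding the component formula for $D_2$ into $f_1 = \pi_1 D_1$ produces exactly $\sum_{i} \sum_{j=0}^{i-1} f_1^i(1^{\otimes j} \otimes f_2^{n-i+1} \otimes 1^{\otimes i-j-1})$, the Gerstenhaber product, whence the commutator maps to the Gerstenhaber bracket. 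The principal obstacle throughout is sign management rather than structure; the diagrammatic calculus makes the two combinatorial identities transparent, and I expect the only delicate point to be confirming that the signs in the two inverse formulas are precisely those forced by the co-Leibniz and comultiplicativity conditions.
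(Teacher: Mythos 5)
Your proposal is correct, and it is more self-contained than the paper's proof while taking a partly different route. The paper does not prove the bijectivity of $\Phi$ and $\Psi$ at all --- it cites Prout\'e \cite[2.16, 2.19]{MR2844537} for that --- and devotes its entire proof to the Lie-algebra compatibility, which it establishes by computing the full composite $de = \Phi^{-1}(f)\,\Phi^{-1}(g)$ of two coderivations in string diagrams: the composite splits into three families of terms ($g$ inserted to the left of, into, or to the right of $f$), and in the commutator the two disjoint families cancel, leaving $\Phi^{-1}[f,g]$. You instead derive bijectivity directly from the identity $\pi_1^{\otimes k}\Delta^{(k-1)} = \pi_k$ together with the iterated co-Leibniz and comultiplicativity identities (the standard cofreeness argument), and you obtain the Lie compatibility by computing only $\pi_1(D_1 D_2) = f_1\,\Phi^{-1}(f_2) = f_1 \circ f_2$, which never sees the disjoint terms; the cancellation the paper performs by hand is absorbed into your appeal to the fact that $[D_1,D_2]$ is again a coderivation, so that $\Phi[D_1,D_2] = \pi_1[D_1,D_2]$ suffices. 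Your version of the Lie computation is the cleaner of the two; what the paper's longer computation buys is an explicit description of all tensor components of the composite of two coderivations, which is the form in which the cancellation is visible and which is occasionally useful elsewhere. The items you leave as routine --- verifying that the explicit formulas for $\Phi^{-1}(f)$ and $\Psi^{-1}(g)$ really define a coderivation and a coalgebra morphism, and the Koszul sign bookkeeping --- are genuinely routine, and the paper delegates them to the reference as well.
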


For  $g \in \hhc A B$ and $f \in \hhc A A$, it follows from \ref{lem:isom_hhc_coder}.(1) that $g \circ f = g \Phi^{-1}(f).$ We define an analogous product using $\Psi^{-1}.$
\begin{defn}\label{rem:defining-start-and-gerst-produc}
  For $g \in \hhc A B_{0}$ and $h\in \hhc B B_{i}$, set
  \[h * g = h \Psi^{-1}(g) \in \hhc A B_{i}.\]
\end{defn}

\begin{rem}\label{rem:summation-convention}
  If there are no superscripts on the morphisms of a string diagram,
  the diagram represents an element $\xi$ of $\hhc A B$ with $\xi^{n}$
  given by the summing over all diagrams of the given shape that have $n$
  inputs. For example, if $h \in \hhc B B$ and $g \in \hhc A B_{0}$,
we write
{$h * g = \begin{tikzpicture}[scale=0.75, every node/.style={transform shape}] \def\wida{.33cm} 
    \def\widb{.30cm}
 \def\widk{.33cm}
 \def\dotsw{.32cm}
 \def\sepw{.1cm}
\def\hgt{.5cm}
 \def\wdth{.55cm}
        \newBoxDiagram[width = \wida, box height = \hgt, arm height
        = \hgt, coord = {(0,0)}, output height = \hgt, label =
        {g}]{g1};
        \print[/bmod]{g1};
 \printArmSep{g1};
        \printDec[color = white]{g1}{\phantom{test}}

        \node at
        ($(\wida + \sepw + \dotsw, \hgt +
        \hgt/2)$) {\ldots};
 \newBoxDiagram[width = \widk, box height
        = \hgt, arm height = \hgt, output height = \hgt, coord =
        {($(\wida+\sepw+2*\dotsw+\widk,
          0)$)}, label = {g}]{gk};
 \print[/bmod]{gk}; 
\printArmSep{gk};
\newlength{\var}
\newlength{\vara}
        \setlength{\var}{\wida/2+\sepw/2+\dotsw+\widk/2}
        \setlength{\vara}{-\hgt - \hgt}
 \newBoxDiagram[width = \var,
        box height = \hgt, arm height = \hgt, output height = \hgt,
        coord = {(\var, \vara)}, label = {h}]{mut}
        \printNoArms[/bmod]{mut}
      \end{tikzpicture}$}
to mean
\begin{displaymath} 
\def\wida{.4cm} 
\def\widb{.35cm}
 \def\widk{.45cm}
 \def\dotsw{.45cm}
 \def\sepw{.1cm}
\def\hgt{.4cm}
\def\boxht{.55cm}
(h*g)^{n} = \sum_{i_1 + \ldots + i_k = n} \hspace{.1cm}
\begin{tikzpicture}[baseline={(current bounding
        box.center)}]
        \newBoxDiagram[width = \wida, box height = \boxht, arm height
        = \hgt, output height = \hgt, label = g^{i_1}]{g1};
        \print[/bmod]{g1};
 \printArmSep{g1};

 \newBoxDiagram[width = \widb,
        box height = \boxht, arm height = \hgt, output height = \hgt,
        coord = {($(\wida+\sepw+\widb,
          0)$)}, label = g^{i_2}]{g2} \print[/bmod]{g2} \printArmSep{g2};

        \node at
        ($(\wida + \sepw + 2*\widb + \dotsw, \hgt +
        \boxht/2)$) {\ldots};
 \newBoxDiagram[width = \widk, box height
        = \boxht, arm height = \hgt, output height = \hgt, coord =
        {($(\wida+\sepw+2*\widb+2*\dotsw+\widk,
          0)$)}, label = g^{i_k}]{gk};
 \print[/bmod]{gk}; 
\printArmSep{gk};

        \setlength{\var}{\wida/2+\sepw/2+\widb+\dotsw+\widk/2}
        \setlength{\vara}{-\boxht - \hgt} \newBoxDiagram[width = \var,
        box height = \boxht, arm height = \hgt, output height = \hgt,
        coord = {(\var, \vara)}, label = {h^{k}}]{mut}
        \printNoArms[/bmod]{mut}
      \end{tikzpicture}
    \end{displaymath}
    and analogously,
    $\displaystyle g \circ f =
    \sum_{j}\hspace{.2cm}\begin{tikzpicture}[baseline={(current
        bounding box.center)}, scale=0.75, every node/.style={transform shape}]
      \def\w{1cm} \def\gstart{-.1} \def\relw{.4} \def\sepw{.1}
      \newBoxDiagram[width/.expand once = \w, arm height = 1.5cm, box
      height = .5cm,
      label = g]{f}; \print[/bmod]{f}; \f.coord at top(gcoor, \gstart);
      \newBoxDiagram[width = \w*\relw, arm height = .5cm, coord =
      {(gcoor)}, output height = .5cm, label = f]{g};
      \print{g}; \printArmSep{g};
      \def\smallsep{.08}
      \pgfmathsetmacro{\var}{\gstart - \relw - \sepw}
      \printArm{f}{\var}; \printDec[/decorate, decorate left = -1,
      decorate right = \var]{f}{j};
      \pgfmathsetmacro{\var}{-1+\smallsep}
      \pgfmathsetmacro{\vara}{\gstart - \relw - \sepw - \smallsep}
      \printArmSep[arm sep left = \var, arm sep right = \vara]{f}
      \pgfmathsetmacro{\var}{\gstart + \relw + \sepw}
      \printArm{f}{\var} \pgfmathsetmacro{\var}{1-\smallsep}
      \pgfmathsetmacro{\vara}{\gstart + \relw + \sepw + \smallsep}
      \printArmSep[arm sep left = \vara, arm sep right = \var]{f}
      \printPeriod{f}.
    \end{tikzpicture}$ (We also extend this notation to tensor products of
    elements of $\hhc A A$ in the proof below.)
\end{rem}

\begin{proof}[Proof of Lemma \ref{lem:isom_hhc_coder}]
For proofs that $\Phi$ and $\Psi$ are isomorphisms of modules see
e.g., \cite[2.16, 2.19]{MR2844537}. We
will show that $\Phi^{-1}$ is a morphism of graded Lie algebras (this
is also presumably well known, but string diagrams give an easy proof). Let $f, g \in \hhc A A$, and set
  $d = \Phi^{-1}(f), e = \Phi^{-1}(g)$. We then have
  \[ d e = \Phi^{-1}(f) \Phi^{-1}(g) = \hspace{.1cm} \def\sepw{.2cm}
    \def\leftw{.2cm} \def\w{.25cm} \def\rightw{.3cm} \left
      ( \begin{tikzpicture}[baseline={([yshift = -.3cm]current
          bounding box.center)}] \node at (-.75,.75)
        {$\displaystyle
          \sum_{j}$}; \newBoxDiagram [box height = 0cm, output height
        = 0cm, arm height = 1.5cm, width = \leftw, decoration yshift
        label = .2cm]{left};
        \newBoxDiagram [arm height = .5cm, width = \w, coord =
        {($(\sepw + \leftw + \w,
          0)$)}, decoration yshift label = .2cm]{f};
        \newBoxDiagram [box height = 0cm, output height = 0cm, arm
        height = 1.5cm, width = \rightw, coord = {(\leftw +
          2*\sepw+2*\w + \rightw, 0)}, decoration yshift label =
        .2cm]{right};
        \print{left}; \printArmSep{left};
        \printDec[/decorate]{left}{j}; \print{f}; \printArmSep{f};
        \print{right}; \printArmSep{right};
      \end{tikzpicture} \right ) \def\sepw{.2cm} \def\leftw{.3cm}
    \def\w{.25cm} \def\rightw{.2cm} \left
      ( \begin{tikzpicture}[baseline={([yshift = -.3cm]current
          bounding box.center)}] \node at (-.75,.75)
        {$\displaystyle
          \sum_{k}$}; \newBoxDiagram [box height = 0cm, output height
        = 0cm, arm height = 1.5cm, width = \leftw, decoration yshift
        label = .2cm]{left};
        \newBoxDiagram [arm height = .5cm, width = \w, coord =
        {($(\sepw + \leftw + \w,
          0)$)}, decoration yshift label = .2cm, label = g]{f};
        \newBoxDiagram [box height = 0cm, output height = 0cm, arm
        height = 1.5cm, width = \rightw, coord = {(\leftw +
          2*\sepw+2*\w + \rightw, 0)}, decoration yshift label =
        .2cm]{right};
        \print{left}; \printArmSep{left};
        \printDec[/decorate]{left}{k}; \print{f}; \printArmSep{f};
        \print{right}; \printArmSep{right};
      \end{tikzpicture} \right ).
  \]
  When composing terms, $g$ is inserted to the left of, into, or to
  the right of, $f$, so
  \[ d e = \sum_{j,k} \left (
      \begin{tikzpicture}[baseline={([yshift = -.3cm]current bounding
          box.center)}]
        \def\sepw{.2cm} \def\leftw{.25cm} \def\w{.25cm}
        \def\rightw{.2cm} \def\rightrightw{.25cm} \def\hgt{.5cm}
        \hspace{.1cm} \newBoxDiagram [box height = 0cm, output height
        = 0cm, arm height = 5*\hgt, width = \leftw, decoration yshift
        label = .2cm]{left}; \newBoxDiagram [arm height = \hgt, box
        height = \hgt, output height = 3*\hgt, width = \w, coord =
        {($(\sepw + \leftw + \w,
          0)$)}, decoration yshift label = .2cm]{g}; \newBoxDiagram
        [box height = 0cm, output height = 0cm, arm height =5*\hgt,
        width = \rightw, coord = {(\leftw + 2*\sepw+2*\w + \rightw,
          0)}, decoration yshift label = .2cm]{right};
        \print{left}; \printArmSep{left};
        \printDec[/decorate]{left}{k}; \print{g}; \printArmSep{g};
        \print{right}; \printArmSep{right};
        \printDec[/decorate]{right}{j}; \newBoxDiagram [arm height =
        3*\hgt, box height = \hgt, output height = \hgt, width = \w,
        coord = {($(3*\sepw + \leftw + 3*\w + 2*\rightw,
          0)$)}, decoration yshift label = .2cm]{f}; \newBoxDiagram
        [box height = 0cm, output height = 0cm, arm height =5*\hgt,
        width = \rightrightw, coord = {(\leftw + 4*\sepw + 4*\w +
          2*\rightw+\rightrightw, 0)}, decoration yshift label=
        .2cm]{rightright}; \print{f}; \printArmSep{f};
        \print{rightright}; \printArmSep{rightright};
      \end{tikzpicture}
      \quad + \quad
      \begin{tikzpicture}[baseline={([yshift = -.3cm]current bounding
          box.center)}]
        \def\w{.75cm} \def\gstart{.1} \def\relw{.4} \def\sepw{.1}
        \def\sepwidth{.1cm} \def\leftw{.3cm} \def\rightw{.3cm}
        \def\hgt{.5cm} \newBoxDiagram [box height = 0cm, output height
        = 0cm, arm height = 2*\hgt, width = \leftw, decoration yshift
        label = .05cm, arm height = 5*\hgt]{left};
        \newBoxDiagram [box height = \hgt, output height = \hgt, width
        = \w, coord = {($(\sepwidth + \leftw + \w,
          0)$)}, decoration yshift label = .2cm, decoration yshift
        label = .05cm, arm height = 3*\hgt]{f};
        \newBoxDiagram [box height = 0cm, output height = 0cm, arm
        height = 3*\hgt, width = \rightw, coord = {(\leftw +
          2*\sepwidth+2*\w + \rightw, 0)}, decoration yshift label =
        .2cm, arm height = 5*\hgt]{right}; \print{f}; \f.coord at
        top(gcoor, \gstart); \newBoxDiagram[width = \w*\relw, box
        height = \hgt, arm height = \hgt, output height = \hgt, coord
        = {(gcoor)}, output height = \hgt]{g}; \print{g};
        \printArmSep{g};

        \def\smallsep{.08}
        \pgfmathsetmacro{\var}{\gstart - \relw - \sepw}
        \printArm{f}{\var}; \printDec[/decorate, decorate left = -1,
        decorate right = \var]{f}{j};
        \pgfmathsetmacro{\var}{-1+\smallsep}
        \pgfmathsetmacro{\vara}{\gstart - \relw - \sepw - \smallsep}
        \printArmSep[arm sep left = \var, arm sep right = \vara]{f}
        \pgfmathsetmacro{\var}{\gstart + \relw + \sepw}
        \printArm{f}{\var}
        \pgfmathsetmacro{\var}{1-\smallsep}
        \pgfmathsetmacro{\vara}{\gstart + \relw + \sepw + \smallsep}
        \printArmSep[arm sep left = \vara, arm sep right = \var]{f}
        \print{left}; \printArmSep{left};
        \printDec[/decorate]{left}{k}; \print{right};
        \printArmSep{right};
      \end{tikzpicture}
      \quad+\quad
      \begin{tikzpicture}[baseline={([yshift = -.3cm]current bounding
          box.center)}]
        \def\sepw{.2cm} \def\leftw{.2cm} \def\w{.25cm}
        \def\rightw{.3cm} \def\rightrightw{.25cm} \def\hgt{.5cm}
        \newBoxDiagram [box height = 0cm, output height = 0cm, arm
        height = 5*\hgt, width = \leftw, decoration yshift label =
        .2cm]{left}; \newBoxDiagram [arm height = 3*\hgt, box height =
        \hgt, output height = \hgt, width = \w, coord =
        {($(\sepw + \leftw + \w,
          0)$)}, decoration yshift label = .2cm, label = f]{g};
        \newBoxDiagram [box height = 0cm, output height = 0cm, arm
        height =5*\hgt, width = \rightw, coord = {(\leftw +
          2*\sepw+2*\w + \rightw, 0)}, decoration yshift label =
        .2cm]{right};
        \print{left}; \printArmSep{left};
        \printDec[/decorate]{left}{j}; \print{g}; \printArmSep{g};
        \print{right}; \printArmSep{right};
        \printDec[/decorate]{right}{k}; \newBoxDiagram [arm height =
        \hgt, box height = \hgt, output height = 3*\hgt, width = \w,
        coord = {($(3*\sepw + \leftw + 3*\w + 2*\rightw,
          0)$)}, decoration yshift label = .2cm, label = g]{f};
        \newBoxDiagram [box height = 0cm, output height = 0cm, arm
        height =5*\hgt, width = \rightrightw, coord = {(\leftw +
          4*\sepw + 4*\w + 2*\rightw+\rightrightw, 0)}, decoration
        yshift label = .2cm]{rightright};
 
        \print{f}; \printArmSep{f}; \print{rightright};
        \printArmSep{rightright}
      \end{tikzpicture}
      \hspace{.2cm}\right ) \] We then have, see
  \ref{rem:signs_for_diags} for signs,
  \[ [d,e] = de - (-1)^{|d||e|}ed = \sum_{j,k} \left (
      \begin{tikzpicture}[baseline={([yshift = -.3cm]current bounding
          box.center)}]
        \def\w{.75cm} \def\gstart{.1} \def\relw{.4} \def\sepw{.1}
        \def\sepwidth{.1cm} \def\leftw{.3cm} \def\rightw{.3cm}
        \def\hgt{.5cm} \newBoxDiagram [box height = 0cm, output height
        = 0cm, arm height = 2*\hgt, width = \leftw, decoration yshift
        label = .05cm, arm height = 5*\hgt]{left};
        \newBoxDiagram [box height = \hgt, output height = \hgt, width
        = \w, coord = {($(\sepwidth + \leftw + \w,
          0)$)}, decoration yshift label = .2cm, decoration yshift
        label = .05cm, arm height = 3*\hgt]{f};
        \newBoxDiagram [box height = 0cm, output height = 0cm, arm
        height = 3*\hgt, width = \rightw, coord = {(\leftw +
          2*\sepwidth+2*\w + \rightw, 0)}, decoration yshift label =
        .2cm, arm height = 5*\hgt]{right}; \print{f}; \f.coord at
        top(gcoor, \gstart); \newBoxDiagram[width = \w*\relw, box
        height = \hgt, arm height = \hgt, output height = \hgt, coord
        = {(gcoor)}, output height = \hgt]{g}; \print{g};
        \printArmSep{g};

        \def\smallsep{.08}
        \pgfmathsetmacro{\var}{\gstart - \relw - \sepw}
        \printArm{f}{\var}; \printDec[/decorate, decorate left = -1,
        decorate right = \var]{f}{j};
        \pgfmathsetmacro{\var}{-1+\smallsep}
        \pgfmathsetmacro{\vara}{\gstart - \relw - \sepw - \smallsep}
        \printArmSep[arm sep left = \var, arm sep right = \vara]{f}
        \pgfmathsetmacro{\var}{\gstart + \relw + \sepw}
        \printArm{f}{\var}
        \pgfmathsetmacro{\var}{1-\smallsep}
        \pgfmathsetmacro{\vara}{\gstart + \relw + \sepw + \smallsep}
        \printArmSep[arm sep left = \vara, arm sep right = \var]{f}
        \print{left}; \printArmSep{left};
        \printDec[/decorate]{left}{k}; \print{right};
        \printArmSep{right};
      \end{tikzpicture}
      -(-1)^{|d||e|} \begin{tikzpicture}[baseline={([yshift =
          -.3cm]current bounding box.center)}] \def\w{.75cm}
        \def\gstart{-.1} \def\relw{.4} \def\sepw{.1}
        \def\sepwidth{.1cm} \def\leftw{.35cm} \def\rightw{.25cm}
        \def\hgt{.5cm} \newBoxDiagram [box height = 0cm, output height
        = 0cm, arm height = 2*\hgt, width = \leftw, decoration yshift
        label = .05cm, arm height = 5*\hgt]{left};
        \newBoxDiagram [box height = \hgt, output height = \hgt, width
        = \w, coord = {($(\sepwidth + \leftw + \w,
          0)$)}, decoration yshift label = .2cm, decoration yshift
        label = .05cm, arm height = 3*\hgt, label = g]{f};
        \newBoxDiagram [box height = 0cm, output height = 0cm, arm
        height = 3*\hgt, width = \rightw, coord = {(\leftw +
          2*\sepwidth+2*\w + \rightw, 0)}, decoration yshift label =
        .2cm, arm height = 5*\hgt]{right}; \print{f}; \f.coord at
        top(gcoor, \gstart); \newBoxDiagram[width = \w*\relw, box
        height = \hgt, arm height = \hgt, output height = \hgt, coord
        = {(gcoor)}, output height = \hgt, label = f]{g};
        \print{g}; \printArmSep{g};

        \def\smallsep{.08}
        \pgfmathsetmacro{\var}{\gstart - \relw - \sepw}
        \printArm{f}{\var}; \printDec[/decorate, decorate left = -1,
        decorate right = \var]{f}{k};
        \pgfmathsetmacro{\var}{-1+\smallsep}
        \pgfmathsetmacro{\vara}{\gstart - \relw - \sepw - \smallsep}
        \printArmSep[arm sep left = \var, arm sep right = \vara]{f}
        \pgfmathsetmacro{\var}{\gstart + \relw + \sepw}
        \printArm{f}{\var}
        \pgfmathsetmacro{\var}{1-\smallsep}
        \pgfmathsetmacro{\vara}{\gstart + \relw + \sepw + \smallsep}
        \printArmSep[arm sep left = \vara, arm sep right = \var]{f}
        \print{left}; \printArmSep{left};
        \printDec[/decorate]{left}{j};

        \print{right}; \printArmSep{right};
      \end{tikzpicture} \right )
  \]
  \[ = \Phi^{-1}(f \circ g - (-1)^{|f||g|} f \circ g) =
    \Phi^{-1}[f,g].\qedhere\]
\end{proof}

We will need the following in a later section. It follows from the
explicit formulas for $\Phi^{-1}$ and $\Psi^{-1}$ given in Lemma \ref{lem:isom_hhc_coder}.
\begin{cor}\label{cor:morphisms-commuting-with-coderivations}
  A graded coalgebra morphism $\gamma: \Tco {\Pi A} \to \Tco {\Pi B}$
  commutes with coderivations $d_{A}$ and $d_{B}$, of $\Tco {\Pi A}$ and
  $\Tco {\Pi B}$, respectively, if and only if
  $\pi_{1} \gamma d_{A} = \pi_{1} d_B \gamma.$
\end{cor}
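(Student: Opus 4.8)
The forward implication is immediate: if $\gamma d_A = d_B\gamma$ then composing with the projection $\pi_1$ yields $\pi_1\gamma d_A = \pi_1 d_B\gamma$. For the converse the plan is to exploit that both $\gamma d_A$ and $d_B\gamma$ are \emph{coderivations relative to $\gamma$}, by which I mean homogeneous maps $\theta\colon \Tco{\Pi A}\to\Tco{\Pi B}$ satisfying $\Delta_B\theta = (\theta\otimes\gamma + \gamma\otimes\theta)\Delta_A$, where $\Delta_A,\Delta_B$ denote the comultiplications. I would then prove that such a $\theta$ is determined by $\pi_1\theta$ once $\gamma$ is fixed, exactly as an ordinary coderivation is recovered from its first component in Lemma \ref{lem:isom_hhc_coder}.(1). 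Since the hypothesis says $\gamma d_A$ and $d_B\gamma$ have equal first components, they must then coincide.

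First I would verify that $\gamma d_A$ and $d_B\gamma$ are relative coderivations. Because $\gamma$ is a coalgebra morphism we have $\Delta_B\gamma = (\gamma\otimes\gamma)\Delta_A$, and because $\gamma\in\hhc A B_{0}$ has degree zero the composite $(\gamma\otimes\gamma)(d_A\otimes 1 + 1\otimes d_A)$ equals $(\gamma d_A)\otimes\gamma + \gamma\otimes(\gamma d_A)$ with no extra Koszul sign. Combined with the coderivation identity $\Delta_A d_A = (d_A\otimes 1 + 1\otimes d_A)\Delta_A$ this gives $\Delta_B(\gamma d_A) = ((\gamma d_A)\otimes\gamma + \gamma\otimes(\gamma d_A))\Delta_A$. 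The computation for $d_B\gamma$ is symmetric, using $\Delta_B d_B = (d_B\otimes 1 + 1\otimes d_B)\Delta_B$ together again with $\lvert\gamma\rvert = 0$.

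The heart of the argument is the relative analogue of \ref{lem:isom_hhc_coder}.(1). Writing $g = \pi_1\gamma$ and $h = \pi_1\theta$, I would iterate the relative coderivation identity and pair it with the deconcatenation coproduct of the tensor coalgebra to obtain
\[
  \pi_k\theta|_{(\Pi A)^{\otimes n}} = \sum_{\ell=1}^{k}\;\sum_{i_1+\cdots+i_k=n} g^{i_1}\otimes\cdots\otimes g^{i_{\ell-1}}\otimes h^{i_\ell}\otimes g^{i_{\ell+1}}\otimes\cdots\otimes g^{i_k},
\]
which is exactly the formula for $\Psi^{-1}(g)$ from part (2) with a single factor of $g$ replaced by the corresponding first-component factor $h$, in the manner of the $\Phi^{-1}$ formula from part (1). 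In particular $\pi_k\theta$ depends only on $h = \pi_1\theta$ and on $g = \pi_1\gamma$, so a relative coderivation is determined by its first component once $\gamma$ is fixed. Applying this to $\theta = \gamma d_A$ and $\theta = d_B\gamma$, whose first components agree by hypothesis, gives $\pi_k\gamma d_A = \pi_k d_B\gamma$ for every $k$, and hence $\gamma d_A = d_B\gamma$. I expect the only delicate point to be the sign bookkeeping in deriving the displayed formula; here it is eased considerably by $\gamma$ having degree zero, which kills the signs that would otherwise arise when transporting $\gamma$ past $\theta$ across the tensor factors.
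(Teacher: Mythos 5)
Your argument is correct and is essentially the paper's own (the paper simply asserts that the corollary ``follows from the explicit formulas for $\Phi^{-1}$ and $\Psi^{-1}$'' in Lemma \ref{lem:isom_hhc_coder}, i.e.\ from the fact that coalgebra maps and coderivations into $\Tco{\Pi B}$ are determined by their first components). Your packaging of $\gamma d_{A}$ and $d_{B}\gamma$ as coderivations relative to $\gamma$, each recoverable from $\pi_{1}$ of itself together with $\pi_{1}\gamma$, is exactly the intended mechanism, and the sign checks are indeed trivial since $|\gamma|=0$.
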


\begin{defn}\label{defn-A-infinity-algebra-morphism} Let $A, B$ be
  graded modules.
  \begin{enumerate}
  \item A \emph{nonunital \Ai-algebra structure} on $A$ is an element
    $\nu \in \hhc A A_{-1}$ such that $\nu \circ \nu = 0.$ For
    $\nu = (\nu^n)$, this is equivalent to
    \[
      \sum_{\substack{1 \leq i \leq n\\0 \leq j \leq i - 1}} \hspace{.1cm} \begin{tikzpicture}[baseline={(current
          bounding box.center)}] \def\w{1.5cm} \def\gstart{-.1}
        \def\relw{.5} \def\sepw{.1} \newBoxDiagram[width/.expand once
        = \w, arm height = 1.5cm, box height = .55cm, label =
        \nu^i]{f}; \print{f}; \f.coord at top(gcoor, \gstart);
        \newBoxDiagram[width = \w*\relw, arm height = .5cm, coord =
        {(gcoor)}, output height = .5cm, label = \nu^{n-i+1}]{g};
        \print{g}; \printArmSep{g};

        \def\smallsep{.08}
        \pgfmathsetmacro{\var}{\gstart - \relw - \sepw}
        \printArm{f}{\var}; \printDec[/decorate, decorate left = -1,
        decorate right = \var]{f}{j};
        \pgfmathsetmacro{\var}{-1+\smallsep}
        \pgfmathsetmacro{\vara}{\gstart - \relw - \sepw - \smallsep}
        \printArmSep[arm sep left = \var, arm sep right = \vara]{f}

        \pgfmathsetmacro{\var}{\gstart + \relw + \sepw}
        \printArm{f}{\var} \printDec[/decorate, decorate left = \var,
        decorate right = 1]{f}{ i - j-1}
        \pgfmathsetmacro{\var}{1-\smallsep}
        \pgfmathsetmacro{\vara}{\gstart + \relw + \sepw + \smallsep}
        \printArmSep[arm sep left = \vara, arm sep right = \var]{f}
      \end{tikzpicture} = 0 \text{ \quad for all } n \geq 1.
    \]
  \item An \emph{\Ai-morphism} $(A, \nu_A) \to (B, \nu_B)$ between
    nonunital \Ai-algebras is an element $g \in \hhc A B_{0}$ such
    that $ \nu_B * g = g \circ \nu_A,$ where $*$ is defined in
    \ref{rem:defining-start-and-gerst-produc}. In diagrams this means,
    \[
      \def\wida{.4cm} \def\widb{.35cm} \def\widk{.45cm}
      \def\dotsw{.45cm} \def\sepw{.1cm} \def\hgt{.55cm}
      \def\boxht{.55cm} \sum_{i_1 + \ldots + i_k = n} \hspace{.1cm}
      \begin{tikzpicture}
        \newBoxDiagram[width = \wida, box height = \boxht, arm height
        = \hgt, output height = \hgt, label = g^{i_1}]{g1}
        \print[/bmod]{g1} \printArmSep{g1} 
        \printDec[color = white]{g1}{test}

\newBoxDiagram[width = \widb,
        box height = \boxht, arm height = \hgt, output height = \hgt,
        coord = {($(\wida+\sepw+\widb,
          0)$)}, label = g^{i_2}]{g2} \print[/bmod]{g2} \printArmSep{g2}
        \node at
        ($(\wida + \sepw + 2*\widb + \dotsw, \hgt +
        \boxht/2)$) {\ldots}; \newBoxDiagram[width = \widk, box height
        = \boxht, arm height = \hgt, output height = \hgt, coord =
        {($(\wida+\sepw+2*\widb+2*\dotsw+\widk,
          0)$)}, label = g^{i_k}]{gk} \print[/bmod]{gk} \printArmSep{gk}
        \setlength{\var}{\wida/2+\sepw/2+\widb+\dotsw+\widk/2}
        \setlength{\vara}{-\boxht - \hgt} \newBoxDiagram[width = \var,
        box height = \boxht, arm height = \hgt, output height = \hgt,
        coord = {(\var, \vara)}, label = {\nu^{k}_{B}}]{mut}
        \printNoArms[/bmod]{mut}
      \end{tikzpicture}
      =\sum_{\substack{1 \leq i \leq n\\0 \leq j \leq i - 1}} \hspace{.1cm}
      \begin{tikzpicture}
        \def\w{1.5cm} \def\gstart{-.1} \def\relw{.5} \def\sepw{.1}
        \newBoxDiagram[width/.expand once = \w, arm height = 2*\hgt +
        \boxht, output height = \hgt, box height = \boxht, label =
        g^i]{f}; \print[/bmod]{f}; \f.coord at top(gcoor, \gstart);
        \newBoxDiagram[box height = \boxht, width = \w*\relw, arm
        height = \hgt, coord = {(gcoor)}, output height = \hgt, label
        = \nu^{n-i+1}_A]{g}; \print{g}; \printArmSep{g};
        \def\smallsep{.08}
        \pgfmathsetmacro{\var}{\gstart - \relw - \sepw}
        \printArm{f}{\var}; \printDec[/decorate, decorate left = -1,
        decorate right = \var]{f}{j};
        \pgfmathsetmacro{\var}{-1+\smallsep}
        \pgfmathsetmacro{\vara}{\gstart - \relw - \sepw - \smallsep}
        \printArmSep[arm sep left = \var, arm sep right = \vara]{f}

        \pgfmathsetmacro{\var}{\gstart + \relw + \sepw}
        \printArm{f}{\var}
        \pgfmathsetmacro{\var}{1-\smallsep}
        \pgfmathsetmacro{\vara}{\gstart + \relw + \sepw + \smallsep}
        \printArmSep[arm sep left = \vara, arm sep right = \var]{f}
      \end{tikzpicture} \text{ for all } n \geq 1.
    \]
  \item The \emph{bar construction} of a nonunital \Ai-algebra
    $(A, \nu)$ is the dg-coalgebra $\Bar A = (\Tco {\Pi A}, \Phi^{-1}(\nu)).$ Since
    $[\nu, \nu] = 0$, and $\Phi^{-1}$ is a morphism of Lie algebras, it follows that $[\Phi^{-1}(\nu),\Phi^{-1}(\nu)] = 0$ and so
    $\Phi^{-1}(\nu)^2 = 0$ (assuming $1/2 \in k$; or one can modify the proof of
    \ref{lem:isom_hhc_coder}). This is functorial with respect to
    \Ai-morphisms, using \ref{lem:isom_hhc_coder}.(2).
    
  \item The \emph{Hochschild cochains} of a nonunital \Ai-algebra
    $(A, \nu)$ is the dg-Lie algebra
    $(\hhc A A, [\nu, -])$.\footnote{This is a slightly non-standard
      version of the Hochschild
      cochains; the standard definition is $\Sigma
      \hhc A A \oplus A.$ To see the Lie algebra structure and
      differential agree in the classical case when $A$ is a
      $k$-algebra, see equation 23 on
  page 280 of \cite{MR0161898} and \cite{MR1239562}.}
  \end{enumerate}
\end{defn}

\begin{rem}\label{ex:dg-alg-is-ainf-alg}
It is often convenient to pass from a family of degree -1 maps
$\nu^{n}: (\Pi A)^{\otimes n} \to \Pi A$ to a family of degree $n -2$
maps $m^{n}: A^{\otimes n} \to A,$ and vice versa. We use the convention that
\begin{align*}
  &m^{n} = s^{-1} \nu^{n}s^{\otimes n},\\
  \intertext{and since $(s^{\otimes n})^{-1} = (-1)^{\frac{n(n-1)}{2}}(s^{-1})^{\otimes
    n}$, it follows that}
  &\nu^{n} = (-1)^{\frac{n(n-1)}{2}} s m^{n} (s^{-1})^{\otimes n}.
\end{align*}
(Proute \cite{MR2844537} uses the convention that $\nu^{n} =
-sm^{n}(s^{-1})^{\otimes n}.$) If $(A, (\nu^{n}))$ is an \Ai-algebra, then,
in low tensor degrees, the corresponding maps $m^{n}$ satisfy:
\begin{equation*}
  \label{eq:1}
  \begin{array}{lccr}
    n = 1 && m^{1} m^{1} = 0\\
    n = 2 && m^{1} m^{2} = m^{2}(m^{1} \otimes 1 + 1 \otimes m^{1})\\
n = 3 && m^{2}(1 \otimes m^{2} - m^{2} \otimes 1) = m^{1}m^{3} + m^{3}
         \circ m^{1}.
  \end{array}
\end{equation*}
Thus, $(A, m^{1})$ is a complex, $m^{2}$ satisfies the Leibniz rule
with respect to $m^{1},$ and the associator of $m^{2}$ is a boundary
in the Hom-complex $(\Hom {} {A^{\otimes 3}} A, \dhom)$ between the
complexes $(A^{\otimes 3}, \delta_{\otimes})$ and
$(A, m^{1}).$

It follows easily from the above that a dg-algebra, i.e., a complex $(A, m^{1})$ with a
compatible associative multiplication $m^{2},$ uniquely determines an
\Ai-algebra $(A, (\nu^{n}))$ with $\nu^{n} = 0$ for all $n \geq 3.$
Conversely, such an \Ai-algebra uniquely determines a dg-algebra.
\end{rem}

\begin{rems}
In this section we could replace the category of graded $k$-modules
with the category of graded objects in an arbitrary symmetric monoidal
category with coproducts, such that a finite coproduct is also a
product, and such that the coproduct behaves as expected with respect
to the tensor product. Indeed, given an object $V$ in such a category, set $\Tco V =
\bigoplus_{n \geq 1} V^{\otimes n}$, and define a comultiplication $\Tco
V \to \Tco V \otimes \Tco V$ on the component $V^{\otimes n}$ to be
the map $V^{\otimes n} \to \bigoplus_{i = 1}^{n-1}(V^{\otimes i} \otimes
V^{\otimes n-i}) \to \Tco V \otimes \Tco V,$ which has components $V^{\otimes n}
\xra{\cong} V^{\otimes i} \otimes V^{\otimes n - i}.$ Then $\Tco V$ is
a coalgebra object in the category, and satisfies the formal
properties of \ref{lem:isom_hhc_coder}, and so the definitions of
\ref{defn-A-infinity-algebra-morphism} make sense in this context.
\end{rems}

\section{Deformation theory of \Ai-algebras}
In this section we recall how the Hochschild
cochains control the infinitesimal deformation theory of an
\Ai-algebra. A goal is to give context and motivation for the
definition and use of Maurer-Cartan elements of dg-Lie algebras. The
reader uninterested in deformation theory only needs Definition
\ref{defn:MC-things}. There are no new results, and the approach follows 
\cite{MR981619, KSDefTheory, KellerDefTheory}; see also
\cite{MR1364455,MR1935035, MR1239562}. We assume that $\frac{1}{2} \in
k$.\footnote{This assumption can be removed by treating $\circ$ as a quadratic squaring map, as
  in \cite[\S 2]{MR0195995}.}

  \begin{defn} Let $l$ be a commutative $k$-algebra.
    \begin{enumerate}
    \item \emph{An $l$-family of
        \Ai-algebra structures on $A$} is an $l$-linear \Ai-algebra structure on
      $A \otimes l$. We set
\[\hhca l {A \otimes l} {A \otimes l} = \prod_{n \geq 1} \Hom {l}
  {(\Pi (A \otimes l))^{\otimes n}} {\Pi (A \otimes l)}.\] Using the isomorphism $\hhca l {A \otimes l} {A \otimes l}
  \cong \hhc A {A \otimes l},$  and denoting $l$ in string
  diagrams as $\tikz[baseline={([yshift = -.1cm]current bounding
    box.center)}]{\draw[parameter] (0,0) -- (0,.5)}$, 
we can write an $l$-family as
$\begin{tikzpicture}[inner sep =0mm]
                                                      \def\sepw{.2cm}
                                                      \def\w{.4cm}

                                                      \newBoxDiagram
                                                      [arm height =
                                                      .5cm, width =
                                                      \w, coord =
                                                      {($(\w,
                                                        0)$)},
                                                      decoration
                                                      yshift label =
                                                      .2cm, label =
                                                      {\bs \nu}]{f};
                                                      \printBox{f};
                                                      \printArm{f}{-1};
                                                      \printArm{f}{1};
                                                      \printArmSep{f};
                                                      \printOutput[/parameter
                                                      ]{f}{.4}
                                                      \printOutput{f}{-.4}
                                                    \end{tikzpicture}.$
      
\item If $\alpha: l \to l'$ is a morphism of commutative
      algebras, represented by a string diagram $\begin{tikzpicture}
    \newBoxDiagram [arm height = .5cm, width = .25cm, decoration
    yshift label = .05cm, box height = .5cm, label = {\tiny{\alpha}}]{f};
    \printBox{f}; \printOutput[/secparameter]{f}{0};\printArm[/parameter]{f}{0};
  \end{tikzpicture},$ then an $l$-family $\bs \nu$ gives rise to the
  $l'$-family
  $\begin{tikzpicture}[inner sep =0mm]
    \def\sepw{.2cm}
    \def\w{.4cm}
    
    \newBoxDiagram
    [arm height =
    .5cm, width =
    \w,
    decoration
    yshift label =
    .2cm, output
    height = 1.5cm, label =
    {\bs \nu}]{f};
    \printBox{f};
    \printArm{f}{-1};
    \printArm{f}{1};
    \printArmSep{f};
    \printOutput{f}{-.4}
    \newBoxDiagram [arm height = .5cm, width = .25cm,  coord =
    {(.4cm,
      0cm)},
    decoration yshift label = .05cm, box height = .5cm, label = {\tiny{\alpha}}]{g};
    \printBox{g}; \printOutput[/secparameter]{g}{0};\printArm[/parameter]{g}{0};
  \end{tikzpicture}.
  $
 Thus there is a functor,
      \begin{displaymath}
        \Aifam A: \Algk \to \Set
      \end{displaymath}
      \begin{displaymath}
        l \mapsto \{ \text{$l$-families of \Ai-algebra structures on $A$} \}.
      \end{displaymath}
    \end{enumerate}
  \end{defn}

   \begin{lem}[Yoneda]\label{lem:Yoneda-for-deformations}
    If the functor $\Aifam A$ is representable by a $k$-algebra $\luniv$ and an isomorphism of
    functors
  $\zeta: \Hom {\Algk} {\luniv} {-} \xra{\cong} \Aifam A,$
  then $\Spec {\luniv}$ is the moduli space of \Ai-algebra structures on $A$
  and $\muniv  = \zeta(1_{\luniv})$ is the universal family of
  \Ai-algebra structures.
\end{lem}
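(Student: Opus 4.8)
The plan is to recognize this statement as a direct instance of the (covariant) Yoneda lemma, so that essentially no computation is required; the content lies entirely in unwinding what ``moduli space'' and ``universal family'' are meant to denote. Recall the relevant form of Yoneda: for any functor $F \colon \Algk \to \Set$ and any object $\luniv$, evaluation at the identity gives a bijection between natural transformations $\Hom{\Algk}{\luniv}{-} \to F$ and elements of $F(\luniv)$, sending a natural transformation $\eta$ to $\eta_{\luniv}(1_{\luniv})$. First I would apply this with $F = \Aifam A$, which is covariant by the second part of the definition of $\Aifam A$, so that the given natural isomorphism $\zeta$ corresponds to the element $\muniv = \zeta(1_{\luniv}) \in \Aifam A (\luniv)$; this element is by definition an $\luniv$-family of \Ai-algebra structures on $A$.

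Next I would extract the universal property of $\muniv$ from the naturality of $\zeta$. For a morphism $\phi \colon \luniv \to l$ in $\Algk$, chasing $1_{\luniv}$ around the naturality square for $\zeta$ at $\phi$ yields $\zeta_l(\phi) = \Aifam A(\phi)(\muniv)$; that is, the $l$-family $\zeta_l(\phi)$ is exactly the family obtained from $\muniv$ by pushing its coefficients forward along $\phi$ (the operation described diagrammatically in the definition of $\Aifam A$). Since $\zeta$ is an isomorphism, each component $\zeta_l$ is a bijection, so every $l$-family $\bs \nu \in \Aifam A(l)$ equals $\zeta_l(\phi)$ for a \emph{unique} $\phi \in \Hom{\Algk}{\luniv}{l}$. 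Combining these two facts gives precisely the universal property: every $l$-family of \Ai-structures on $A$ arises from $\muniv$ by pushforward along a unique algebra homomorphism $\luniv \to l$, which is what it means for $\muniv$ to be the universal family.

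Finally, for the assertion about the moduli space, I would observe that the functor of points of $\Spec \luniv$ on $k$-algebras is precisely $l \mapsto \Hom{\Algk}{\luniv}{l}$, and that $\zeta$ identifies this functor with $\Aifam A$; hence the $l$-points of $\Spec \luniv$ are naturally the $l$-families of \Ai-structures on $A$, which is the sense in which $\Spec \luniv$ is the moduli space. There is no genuine obstacle here: the only point requiring care is the bookkeeping about variance, together with the precise identification of $\Aifam A(\phi)(\muniv)$ with the $l$-family obtained by applying $\phi$ to the coefficients of $\muniv$. Once that identification is in hand, the proof reduces to a single appeal to the Yoneda lemma.
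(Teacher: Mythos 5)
Your proposal is correct and matches the paper's own (very terse) justification: the paper simply notes after the lemma that for any $l$-family $\bs\nu$ there is a unique $f\colon \luniv \to l$ with $\bs\nu = \Aifam A(f)(\muniv)$, taking this universal property as the definition of moduli space and universal family. Your write-up just spells out the same Yoneda-lemma bookkeeping (naturality square at $\phi$ applied to $1_{\luniv}$, plus bijectivity of each $\zeta_l$) in more detail.
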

Indeed, for any commutative $k$-algebra $l$ and any $l$-family
  $\bs \nu \in \hhca {l} {A \otimes l} {A \otimes l}$, there exists a unique morphism
  $f: \luniv \to l$ such that $\bs \nu = \Aifam A (f)(\muniv)$ (one
  can take this as the definition of moduli space and universal family). In
  particular, the set of \Ai-algebra structures on $A$ corresponds to the
  set of $k$-morphisms $\luniv \to k.$

  If $A$ is a finitely generated
  graded projective $k$-module, and is concentrated in non-negative
  degrees, or in degrees at most $-2$, then
  $\Aifam A$ is representable. Indeed, set $L^{i} = \hhc A
  A_{-i}$ and $b = [-,-]: L^{1} \otimes L^1 \to L^2.$ By the
  assumptions on $A$, $L^1$ is a finitely generated projective
  $k$-module. Thus, writing $(-)^*$ for the $k$-dual, the natural map
  $\iota: L_1^* \otimes L_1^* \to (L_1 \otimes L_1)^*$ is an
  isomorphism. If we denote by $\sqr^*: L_2^* \to \Sym^2(L_1^*)$ the
  map $L_2^* \xra{b^*} (L_1 \otimes L_1)^* \xra{\iota^{-1}} L_1^*
  \otimes L_1^* \onto \Sym^2(L_1^*),$ then the algebra $\luniv :=
  \Sym^{\bullet}(L_1^*)/(\sqr^*(L_2^*))$ represents $\Aifam A$. If $k$
  is an algebraically closed field, then the closed points of
  $\Spec{\luniv}$ correspond to the Maurer-Cartan elements of $L^1 =
  \hhc A A_{-1}$, i.e., \Ai-algebra structures on $A$.

Regardless of whether the functor $\Aifam A$ is representable, we can
view the functor
as a generalized scheme. By Yoneda's Lemma,\footnote{A more general (and standard)
  version than the one quoted in Lemma
  \ref{lem:Yoneda-for-deformations}.} an $l$-family $\bs \nu$
corresponds to
the natural transformation $\bs \nu_*: h^l \to \Aifam A$ that sends
$\beta \in h^l(l') = \Hom
{\Algk} l {l'}$ to $\Aifam A(\beta)(\bs
\nu) \in \Aifam A(l')$. Given an \Ai-structure $\nu$ on $A$, we say
the $l$-family $\bs \nu$ contains $\nu$ if there is a natural
transformation $\epsilon^{*}: h^{k} \to h^{l}$ such that the following diagram
is commutative:
\begin{displaymath}
  \begin{tikzcd}
    h^{k} \ar[dr, "\nu_{*}"] \ar[dd, dashed, "\epsilon^{*}"']&\\[-15pt]
    &\Aifam A.\\[-15pt]
    h^{l} \ar[ur,"\bs \nu_{*}"']&[-15pt]
  \end{tikzcd}
\end{displaymath}
By Yoneda again, the transformation $\epsilon^{*}$ is determined by a
$k$-algebra morphism $\epsilon: l \to k.$ Unraveling this gives an algebraic definition of $l$-family that contains a
marked $k$-point $\nu.$
\begin{defn}\label{defn:deformation}
 Let $(A, \nu)$ be a nonunital \Ai-algebra and $\epsilon: l \to k$ a
 morphism of commutative $k$-algebras. An \emph{$(l, \epsilon)$-deformation
  of $(A, \nu)$} is an $l$-family
  $\bs \nu$ such that $\Aifam A(\epsilon)(\bs \nu) = \nu.$ In
  diagrams, this means,
  \begin{displaymath}
\begin{tikzpicture}[baseline={(current bounding box.center)}]
                      \def\w{.75cm}
                      \def\hgt{.42cm}
                      \def\highPos{0}
                      \def\relw{1}
                      \def\sepw{.1}
                      \def\smallsep{.08}
                      \def\modsep{.35}
                      \pgfkeys{lowBoxStyle/.style = {box height =
                          \hgt, output height = \hgt, width = \w,
                          coord =
                          {($(0,0)$)}, decoration yshift label = .2cm,
                          decoration yshift label = .05cm, arm height
                          = 3*\hgt, label = 1 \otimes \epsilon}}
                      \newBoxDiagram [/lowBoxStyle]{lowBox};
                      \printBox{lowBox};
%
                      \lowBox.coord at top(highcoor, \highPos);
                      \pgfkeys{highBoxStyle/.style = {width =
                          \w*\relw, box height = \hgt, arm height =
                          \hgt, output height = \hgt, coord =
                          {(highcoor)}, output height = \hgt, label =
                          {\bs \nu}}}
                      \newBoxDiagram[/highBoxStyle]{highBox};
                      \printBox{highBox}; \printArmSep{highBox};
                        \printArm{highBox}{-1};
                      \printArm{highBox}{1};
                      \printOutput{highBox}{-.4};
                      \printOutput[/parameter]{highBox}{.4};
                      \printOutput{lowBox}{0};
                    \end{tikzpicture}
                    \hspace{.2cm} = \hspace{.2cm}
                    \begin{tikzpicture}[baseline={(current bounding box.center)}]
                      \def\w{.75cm}
                      \def\hgt{.42cm}
                      \def\highPos{0}
                      \def\relw{1}
                      \def\sepw{.1}
                      \def\smallsep{.08}
                      \def\modsep{.35}
                       \pgfkeys{lowBoxStyle/.style = {box height =
                          \hgt, output height = \hgt, width = \w,
                          coord =
                          {($(0,0)$)}, decoration yshift label = .2cm,
                          decoration yshift label = .05cm, arm height
                          = 3*\hgt, label = \nu}}
                      \newBoxDiagram [/lowBoxStyle]{lowBox};
                      \print{lowBox};
                      \printArmSep{lowBox};
                      \printPeriod{lowBox};
                      \end{tikzpicture}
\end{displaymath}
We denote
by $\AugAlgk$ the category with objects pairs $(l, \epsilon)$ as
above, and
morphisms the algebra morphisms commuting with the augmentations. Set $\Aidef {(A, \nu)}: \AugAlgk
\to \Set$ to be the functor that sends $(l, \epsilon)$ to the set of $(l, \epsilon)$-deformations of $(A, \nu)$.
\end{defn}

If $\Aifam A$ is represented by $\luniv$, and
    $\alpha: \luniv \to k$ is the morphism corresponding to $\nu$,
    then one checks
    the augmented $k$-algebra $(\luniv, \alpha)$ represents
    $\Aidef {(A, \nu)}$. Regardless of the representability of
    $\Aidef {(A, \nu)},$ we can view the functor as describing the
    generalized scheme $\Aifam A$ near the $k$-point corresponding to $\nu.$ We can focus
  attention on the (generalized) infinitesimal neighborhoods of the $k$-point by restricting the
  domain of $\Aidef {(A,\nu)}$ to $\finAlgk$,  the full
  subcategory of $\AugAlgk$ with objects $(l, \epsilon),$ such that $l$
  is a finitely generated projective $k$-module and $(\ker
  \epsilon)^{N} = 0$ for some $N \geq 1$ (the last condition follows
  from the first if
  $k$ is a field,
  and $l$ is local).

\begin{defn}
An \emph{infinitesimal deformation} of a nonunital \Ai-algebra $(A, \nu)$ is
  an $(l, \epsilon)$-deformation with $(l, \epsilon)$ an object of
  $\finAlgk$. The corresponding functor is denoted $\infAidef {(A,\nu)} = \Aidef {(A,\nu)}|_{\finAlgk}: \finAlgk \to \Set.$
\end{defn}

If $\Aidef {(A, \nu)}$ is represented by an augmented $k$-algebra
$(\luniv, \epsilon),$ such that $\luniv/(\ker \epsilon)^{n}$ is in
$\finAlgk$ for all $n$ (this holds when $k$ is a field and
$\luniv$ is noetherian), then
$\infAidef {(A,\nu)}$ is pro-represented by the completion
$\ds \underleftarrow{\lim}_{n \geq 0} \, \luniv/(\ker
\epsilon)^{n}$. Indeed, the canonical morphism
of functors,
\[\ds \underrightarrow{\colim}_{n} \Hom {\Algk}
{\luniv/(\ker \epsilon)^{n}} {-} \to \Hom {\Algk} \luniv -,\] is
easily checked to be an
isomorphism on $\finAlgk$, and this is the definition of
pro-representability (see e.g., \cite[\S 2]{MR1603480}).
If $\luniv$ is Noetherian, then $\operatorname{Spf}$ of the completion
is the formal completion of $\Aidef {(A, \nu)}$ along the
$k$-point of $\nu$.

Since $\infAidef {(A, \nu)}$ preserves limits, a result of Grothendieck, \cite[Corollary to
3.1]{MR1603480}, shows that it
is pro-representable, but the result does not describe the
pro-representing object. Drinfeld showed \cite{MR3285856}, in case $A$ is concentrated in non-negative
  degrees, or in degrees at most $-2$, and degreewise
finitely generated, that the degree
zero Lie algebra cohomology of the Hochschild cochains pro-represents
$\infAidef {(A, \mu)}$. He put the answer in the following more general
context, which shows where the finiteness assumptions on $A$
enter. First note that $(\finAlgk)^{\op} = \finCoalgk,$ the category
of cocomplete cocommutative coalgebras that are finitely generated projective
$k$-modules. The ind-completion of this category is equivalent to $\AugCoalgk,$ the category of
all cocommutative cocomplete coalgebras that are projective
$k$-modules\footnote{This holds since the cocomplete coalgebras are
  closed under colimits, and every element in a cocomplete coalgebra
  is contained in a sub-coalgebra that is a finitely generated
  projective $k$-module; see e.g., \cite[Chapter 1, \S 6]{MR0344261}, \cite{MR0252485}}. It follows that
$(\AugCoalgk)^{\op}$ is equivalent to the pro-completion of $\finAlgk.$
Any functor $\finAlgk \to \Set$ that preserves limits extends uniquely
to a limit preserving functor on the pro-completion, and thus such a functor is
pro-representable exactly when the corresponding functor on coalgebras
is representable. The dual of the coalgebra is then a
pro-representing object. Kontsevich and Soibelman
develop this point of view extensively in \cite{KSDefTheory}.

The functor $\infAidef {(A, \mu)}$
extends to a functor
on the category of dg-coalgebras (whose
underlying graded coalgebra is in $\AugCoalgk$). We denote this
category by $\dgAugCoalgk$. When $k$ is a field
of characteristic zero, Quillen \cite{MR0258031}, assuming certain
boundedness conditions later removed by
Hinich \cite{HinichJPAA01}, defined a model category structure on
$\dgAugCoalgk,$ and showed there is an equivalence,
\begin{displaymath}
\xymatrix{ \Ho(\dgAugCoalgk) \ar@<-1.2ex>[rr]_{\cobar{}} &&
\Ho(\dgLieAlgk), \ar@<-1.2ex>[ll]_{\Bar {}}^{\cong}}
\end{displaymath}
between the homotopy
category of this model category and the homotopy category of dg-Lie
algebras. The equivalence is given by the commutative versions of the bar and
cobar constructions. The functor $\infAidef {(A, \mu)}$ induces a functor $\Ho(\dgAugCoalgk)^{\op} \to \Set.$ Such a
functor is
representable exactly when its representable by $\Bar L,$ for some
dg-Lie algebra $L,$ by the above equivalence. Unwinding definitions, the functor represented by
$\Bar L,$ restricted to $\finAlgk,$ is
the following (see
\cite[\S 2.7]{KellerDefTheory} for details of the unwinding).

\begin{defn}\label{defn:MC-things} Let $(L, \delta)$ be a dg-Lie algebra.
  \begin{enumerate}
  \item The \emph{Maurer-Cartan elements} are
\begin{displaymath}
\MC ( L, \delta )
        := \{ v \in L_{-1} \, | \, \delta(v) + \frac{1}{2}[ v, v] = 0 \}.
\end{displaymath}
  \item The \emph{Maurer-Cartan functor} is
    \begin{displaymath}
      \begin{gathered}
        \MCFunct {(L, \delta)} : \finAlgk \to \Set\\
        (l, \epsilon) \mapsto \MC ( L \otimes \o l, \delta \otimes 1)
        := \{ v \in (L \otimes \o l)_{-1} \, | \, (\delta \otimes
        1)(v) + \frac{1}{2}[ v, v] = 0 \},
      \end{gathered}
    \end{displaymath}
    where $\o l = \ker \epsilon$ and $L \otimes \o l$ has the induced
    bracket $[v \otimes x, v' \otimes x'] = [v, v'] \otimes xx'$.
    
\item A functor $F: \finAlgk \to \Set$ is \emph{controlled by the dg-Lie
      algebra $(L, \delta)$} if there is an equivalence
    $\MCFunct {(L,\delta)} \xra{\cong} F.$
  \end{enumerate}
\end{defn}

We assumed that $k$ was a characteristic zero field in the paragraph above, but Definition \ref{defn:MC-things} makes sense over any
commutative ring (with $1/2 \in k$). The most natural context for this
story is derived algebraic
geometry, see \cite{MR2827833, 1401.1044}. Staying at a more concrete level,
Schectmann shows \cite[Theorem 2.5]{math/9802006} that if $L$ is concentrated in strictly
positive cohomological degrees, and $L^{1}$ is finite dimensional, then the
zeroth cohomology of the bar construction of $L$ pro-represents the Maurer-Cartan
functor.

One motivation behind the Maurer-Cartan approach to deformation theory is that often there is an
apparent dg-Lie algebra controlling a given functor, for instance
$\infAidef {(A, \nu)}$. We now show that the dg-Lie algebra of Hochschild
cochains controls it (this is classical, but we give details for lack
of a reference at this level of generality). Paired with  \cite[Theorem
2.5]{math/9802006}, it recovers Drinfeld's description of the
pro-representing object of $\infAidef {(A, \mu)},$ assuming certain
finiteness conditions.

\begin{prop}\label{prop:MC-equiv-to-defs}
 Let $(A, \nu)$ be a nonunital \Ai-algebra and
  $(\hhc A A, [\nu,-])$ the Hochschld cochains. The following is an equivalence:
  \begin{align*}
    \MCFunct {(\hhc A A, [\nu, -])} \to \infAidef
    {(A,\nu)}\\
    \t \nu \mapsto \theta_{l}(\t \nu + \nu \otimes 1) = \bs \nu,
  \end{align*}
  where $\theta_{l}$ is the canonical morphism of graded Lie algebras,
  \begin{displaymath}
   \begin{gathered}
    \theta_{l}: \hhc A A \otimes l \to \hhca l {A \otimes l} {A
      \otimes l}\\
    (\nu^{n}) \otimes y \mapsto ([a_1 \otimes x_1 | \ldots | a_n
    \otimes x_n] \mapsto \nu^n[a_1|\ldots|a_n] \otimes yx_1\ldots x_n
    ),
  \end{gathered}
\end{displaymath}
with the induced bracket
on the source and the Gerstenhaber bracket on the target.
\end{prop}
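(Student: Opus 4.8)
The plan is to reduce the whole statement to the fact that the Lie algebra map $\theta_{l}$ is an \emph{isomorphism} for the rings occurring in $\finAlgk$. First I would record that if $l$ is a finitely generated projective $k$-module, then for all $M,N$ the canonical map $\Hom{}{M}{N}\otimes l \to \Hom{}{M}{N\otimes l}$ is an isomorphism (it holds for finite free $l$ and passes to direct summands). Taking $M=(\Pi A)^{\otimes n}$ and $N=\Pi A$ and using the identification $\hhca l{A\otimes l}{A\otimes l}\cong\hhc A{A\otimes l}$, this shows that $\theta_{l}\colon \hhc A A\otimes l\to \hhca l{A\otimes l}{A\otimes l}$ is bijective. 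Since it is a morphism of graded Lie algebras by hypothesis, it is an isomorphism of graded Lie algebras; this applies to every $(l,\epsilon)\in\finAlgk$, as such $l$ are finitely generated projective.

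Next I would exploit the augmentation to split $l = k\cdot 1\oplus\o{l}$, which induces a decomposition $\hhc A A\otimes l = (\hhc A A\otimes 1)\oplus(\hhc A A\otimes\o{l})$ in degree $-1$. Under $\theta_{l}^{-1}$ every $l$-family $\bs\nu$ thus corresponds to a unique $\mu=\nu'\otimes 1+\t\nu$ with $\nu'\in\hhc A A_{-1}$ and $\t\nu\in(\hhc A A\otimes\o{l})_{-1}$, and I would translate the two defining conditions of an $(l,\epsilon)$-deformation accordingly. Because $\theta_{l}$ is a degree-preserving Lie isomorphism and $\bs\nu$ has odd degree, the \Ai-structure equation $\bs\nu\circ\bs\nu=0$ is equivalent to $\tfrac12[\bs\nu,\bs\nu]=0$, hence (applying $\theta_{l}^{-1}$) to $\tfrac12[\mu,\mu]=0$. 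For the augmentation condition, naturality of $\theta$ in $l$ gives a commutative square relating $\theta_{l}$ and $\theta_{k}=\id$ along $\id\otimes\epsilon$, so that $\Aifam A(\epsilon)(\bs\nu)=(\id\otimes\epsilon)(\mu)=\nu'$; therefore $\Aifam A(\epsilon)(\bs\nu)=\nu$ holds precisely when $\nu'=\nu$.

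It then remains to expand $\tfrac12[\mu,\mu]$ for $\mu=\nu\otimes 1+\t\nu$. Using bilinearity, graded antisymmetry (which makes the two cross terms equal), and $[\nu,\nu]=0$ in $\hhc A A$ (so $[\nu\otimes 1,\nu\otimes 1]=0$), one finds
\[
  \tfrac12[\nu\otimes 1+\t\nu,\;\nu\otimes 1+\t\nu]=[\t\nu,\nu\otimes 1]+\tfrac12[\t\nu,\t\nu].
\]
The key computation is to identify the first term with the differential. Writing $\t\nu=\sum_i w_i\otimes y_i$ with $y_i\in\o{l}$ of degree $0$, the bracket rule $[v\otimes x,v'\otimes x']=[v,v']\otimes xx'$ gives $[\t\nu,\nu\otimes 1]=\sum_i[w_i,\nu]\otimes y_i$, and the sign $[w_i,\nu]=-(-1)^{|w_i||\nu|}[\nu,w_i]=[\nu,w_i]$ (both $w_i$ and $\nu$ have degree $-1$) shows this equals $([\nu,-]\otimes 1)(\t\nu)$. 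Setting $\delta=[\nu,-]$, we obtain $\tfrac12[\mu,\mu]=(\delta\otimes 1)(\t\nu)+\tfrac12[\t\nu,\t\nu]$, which is exactly the left-hand side of the Maurer-Cartan equation for $\t\nu$. Combining the three translations, the assignment $\t\nu\mapsto\theta_{l}(\nu\otimes 1+\t\nu)$ is a bijection from $\MC(\hhc A A\otimes\o{l},\delta\otimes 1)$ onto the set of $(l,\epsilon)$-deformations, and naturality in $(l,\epsilon)$ follows from the naturality of $\theta$ and of the two functors. I expect the main obstacle to be the careful bookkeeping of the Koszul signs in this final identification, together with verifying the naturality of $\theta$ on $\finAlgk$.
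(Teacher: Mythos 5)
Your proposal is correct and follows essentially the same route as the paper: both use that $\theta_{l}$ is an isomorphism because $l$ is finitely generated projective, and both reduce to the bijection $\t\nu \mapsto \t\nu + \nu\otimes 1$ between Maurer--Cartan elements of $(\hhc A A \otimes \o l, [\nu\otimes 1,-])$ and the $l$-linear \Ai-structures lying over $\nu$. The only difference is that you carry out explicitly the bracket expansion $\tfrac12[\nu\otimes 1+\t\nu,\nu\otimes 1+\t\nu]=([\nu,-]\otimes 1)(\t\nu)+\tfrac12[\t\nu,\t\nu]$, which the paper leaves as ``one checks.''
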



\begin{proof}
  Let $(l, \epsilon)$ be an object of $\finAlgk$ and set
  $\o l = \ker \epsilon.$ By definition,
$$\MCFunct {(\hhc A A, [\nu, -])}(l, \epsilon)
= \MC( \hhc A A \otimes \o l, [\nu \otimes 1, -]),$$ and one checks
the following is a bijection,
\begin{align*}
  \MC( \hhc A A \otimes \o l, [\nu \otimes 1, -]) &\xra{\cong} \MC(1
                                                    \otimes \epsilon)^{-1}(\nu) \subseteq \MC(\hhc A
                                                    A \otimes l, 0)\\
  \t \nu &\mapsto \t \nu + \nu \otimes 1.
\end{align*}
Since $l$ is a finite rank projective $k$-module, $\theta_{l}$ is an
isomorphism, and thus induces a bijection $ \MC(\hhc A A \otimes l, 0)
\xra{\cong} \MC(\hhca l {A \otimes l} {A \otimes l},0).$ This restricts to a bijection
$\MC(1 \otimes \epsilon)^{-1}(\nu) \xra{\cong} \Aifam
A(\epsilon)^{-1}(\nu) = \infAidef {(A,\nu)}(l).$
\end{proof}

One is most often interested in families and deformations modulo the
following.

\begin{defn}
  An \emph{isomorphism between $l$-families} is an $l$-linear \Ai-isomorphism.
  An \emph{equivalence of deformations} is an isomorphism of families
  that reduces to the identity on $A$.
\end{defn}

One can consider isomorphism and equivalence classes using
the following group functors. For $l$ a commutative $k$-algebra, set $H_{A}(l)= \{ \t g = (\t g^n) \in \hhca l {A \otimes l} {A \otimes
      l}_{0} \, | \, \t g^1 \text{ is an isomorphism}\}$. There is an equality $\Aut_{\Coalgk
  [l]}(\Tco{\Pi A \otimes l}) =  \Psi^{-1}(H_{A}(l))$, where $\Psi^{-1}$ is defined in \ref{lem:isom_hhc_coder}.(2); see
\cite[Proposition 2.5]{MR1989615} for a proof. Thus $H_{A}$ is a group functor
$ H_{A}: \Algk \to \Group$. Using this, set
\begin{displaymath}
  \begin{gathered}
    G_A: \AugAlgk \to \Group\\
    G_A(l,\epsilon) = \{ \t g \in H_A(l) \, | \,(1 \otimes
    \epsilon)_{*}(\t g) = 1 \}.
  \end{gathered}
\end{displaymath}
There is an action $H_A \times \Aifam A \to \Aifam A$, defined using
the isomorphisms of \ref{lem:isom_hhc_coder}, 
whose quotient functor sends $l$ to the set of isomorphism classes
  of $l$-families of \Ai-structures on $A$. If $(A, \nu)$ is an
  \Ai-structure, the action of $H_{A}$ restricts to an action $G_A \times \Aidef {(A,\nu)} \to \Aidef {(A,\nu)}$
  whose quotient functor sends $(l, \epsilon)$ to the set of equivalence classes of
        $(l,\epsilon)$-deformations of $(A,\nu)$.

\begin{cor}
Let $(A, \nu)$ be a nonunital \Ai-algebra. The following is a
bijection,
\begin{displaymath}
  \begin{gathered}
H^{1}(\hhc A A,[\nu, -]) \to {\infAidef {(A,
  \nu)}(k[t]/(t^{2}))}/{\sim},\\
\nu \mapsto \theta(\nu \otimes t + \nu \otimes 1),
\end{gathered}
\end{displaymath}
where the right side is the set of equivalence classes of $k[t]/(t^{2})$-deformations.
\end{cor}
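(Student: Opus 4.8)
The plan is to obtain the statement as the first-order, infinitesimal shadow of Proposition \ref{prop:MC-equiv-to-defs}: specialize the controlling equivalence $\MCFunct{(\hhc A A, [\nu,-])} \xra{\cong} \infAidef{(A,\nu)}$ to the augmented algebra $l = k[t]/(t^2)$ (with $\epsilon(t) = 0$, so $\o l = \ker\epsilon = kt$ and $t^2 = 0$), and then pass to equivalence classes on both sides. First I would evaluate the Maurer--Cartan functor directly. Every element of $(\hhc A A \otimes \o l)_{-1}$ has the form $\t\nu \otimes t$ with $\t\nu \in \hhc A A_{-1}$, and the bracket term $\tfrac12[\t\nu\otimes t, \t\nu\otimes t] = \tfrac12[\t\nu,\t\nu]\otimes t^2$ vanishes because $t^2 = 0$. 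Hence the Maurer--Cartan equation of Definition \ref{defn:MC-things} collapses to $[\nu,\t\nu]\otimes t = 0$, i.e.\ to the cocycle condition $[\nu,\t\nu] = 0$. Thus $\MCFunct{(\hhc A A,[\nu,-])}(k[t]/(t^2))$ is identified, via $\t\nu\otimes t \leftrightarrow \t\nu$, with the group $Z^1$ of degree $-1$ cocycles of $(\hhc A A,[\nu,-])$, and by Proposition \ref{prop:MC-equiv-to-defs} this set maps bijectively onto $\infAidef{(A,\nu)}(k[t]/(t^2))$ via $\t\nu \mapsto \theta_l(\t\nu\otimes t + \nu\otimes 1)$.

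It then remains to match the two equivalence relations. The equivalence classes of $(k[t]/(t^2),\epsilon)$-deformations are the orbits of the action of $G_A(k[t]/(t^2),\epsilon)$. Using the isomorphism $\hhca l {A\otimes l}{A\otimes l}\cong \hhc A A \otimes l$ for the finite free algebra $l$, an element of $G_A(k[t]/(t^2),\epsilon)$ (one reducing to the identity modulo $t$) has the form $\id + g\otimes t$ with $g\in \hhc A A_0$ arbitrary, so as a group $G_A(k[t]/(t^2),\epsilon)\cong (\hhc A A_0,+)$. The next step is to transport the action $G_A\times\Aifam A \to \Aifam A$ across the equivalence of Proposition \ref{prop:MC-equiv-to-defs} to the infinitesimal gauge action on Maurer--Cartan elements, and to compute it on $\t\nu\otimes t$. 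Because every product of two factors from $\o l = kt$ lands in $t^2 = 0$, all higher gauge terms and the bracket $[g\otimes t, \t\nu\otimes t]$ vanish, leaving the first-order formula $(\id + g\otimes t)\cdot(\t\nu\otimes t) = (\t\nu + [\nu,g])\otimes t$. Consequently two cocycles are gauge equivalent precisely when they differ by $[\nu,g]$ with $g\in\hhc A A_0$, i.e.\ by an element of $B^1 := \im([\nu,-]\colon \hhc A A_0 \to \hhc A A_{-1})$.

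Combining these identifications, the controlling equivalence descends to a bijection $Z^1/B^1 \xra{\cong} \infAidef{(A,\nu)}(k[t]/(t^2))/{\sim}$, and $Z^1/B^1 = H^1(\hhc A A,[\nu,-])$ by the grading convention (cohomological degree $1$ is homological degree $-1$, with the differential $[\nu,-]$ lowering homological degree by one). This is exactly the asserted map $\t\nu\mapsto\theta_l(\t\nu\otimes t + \nu\otimes 1)$.

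I expect the genuine work to be concentrated in the transport-and-compute step for the gauge action: one must unwind the definition of the action $H_A\times\Aifam A\to\Aifam A$ (built from the coalgebra automorphisms $\Psi^{-1}(H_A(l))$ of Lemma \ref{lem:isom_hhc_coder}) and verify that, under the identification of Proposition \ref{prop:MC-equiv-to-defs}, it agrees with the exponential gauge action on Maurer--Cartan elements and reduces, modulo $t^2$, to the coboundary formula above. The remaining steps --- the vanishing of the bracket term in the Maurer--Cartan equation and the structural identification $Z^1/B^1 = H^1$ --- are formal consequences of $t^2 = 0$ and the definitions.
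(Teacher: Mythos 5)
Your proposal is correct and follows essentially the same route as the paper: specialize Proposition \ref{prop:MC-equiv-to-defs} to $l = k[t]/(t^{2})$, note that $t^{2}=0$ kills the bracket term so that Maurer--Cartan elements are exactly the cocycles $Z^{1}$, and then identify $G_{A}(k[t]/(t^{2}))$ with $\hhc A A_{0}$ acting by $\t\nu \mapsto \t\nu + [\nu,\alpha]$, so that orbits correspond to cohomology classes. The paper carries out the gauge-action computation directly via the bijection $\alpha \mapsto \theta(1\otimes 1 + \alpha\otimes t)$ rather than by invoking the exponential formalism, but the content is the same.
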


\begin{proof}
Let $Z^{1}$ be the cohomological degree 1 cycles of the complex $(\hhc
A A, [\nu, -]).$ The assignment $\nu \mapsto \nu \otimes t$
is a bijection $Z^{1} \to \MCFunct {(A, \nu)}(k[t]/(t^{2})) = \MC(\hhc A A
\otimes kt, [\nu \otimes 1, -])$. Thus by \ref{prop:MC-equiv-to-defs},
the assignment $\nu \mapsto \theta(\nu \otimes t + \nu \otimes 1)$ is a
bijection $Z^{1} \xra{\cong} \infAidef {(A, \nu)}(k[t]/(t^{2})).$

We now claim that for $\nu, \nu' \in Z^{1}$, the deformations
$\theta(\nu \otimes t + \nu \otimes 1)$ and $\theta(\nu' \otimes t +
\nu \otimes 1)$ are equivalent if and only if $\theta((\nu' - \nu)
\otimes t) = \theta([\mu, \alpha] \otimes t),$ for some $\alpha \in
\hhc A A_{0}.$ The claim finishes the proof, since then $\theta(\nu \otimes t + \nu \otimes 1)$ and $\theta(\nu' \otimes t +
\nu \otimes 1)$ are equivalent if and only if $\nu' - \nu = [\mu,
\alpha]$ for some $\alpha,$ using that
$\theta$ is a bijection. This last condition says exactly that $[\nu]
= [\nu'] \in H^{1}(\hhc A A,[\nu, -]).$ To see the claim, note there is a
bijection $\xi: \hhc A A_{0} \xra{\cong} G_{A}(k[t]/(t^{2})), \alpha
\mapsto \theta(1 \otimes 1 + \alpha \otimes t),$ and the action of
$\xi(\alpha)$ on $\theta(\nu \otimes t + \nu
\otimes 1)$ is $\theta(\nu \otimes t + (\nu\circ \alpha - \alpha \circ
\nu) \otimes t + \nu \otimes 1).$
\end{proof}

\begin{rems}
Let $(L, \delta)$ be a dg-Lie algebra, and assume that $k$ contains
$\bQ.$ For any $(l, \epsilon) \in \finAlgk,$ the graded Lie algebra $L
\otimes \o l$ is nilpotent, and thus we can define its exponential,
which makes it a
group. This gives a functor
$\finAlgk \to \Group.$ This functor acts on the
Maurer-Cartan functor of $(L, \delta)$, and is usually the group
functor one hopes to quotient by (in case the dg-Lie algebra is the Hochschild
cochains, the group functor agrees with $G_{A}|_{\finAlgk}$). This is another advantage of
the Maurer-Cartan formalism (in characteristic zero): the group we
hope to quotient by is built into the Lie algebra.  This
point of view is due to Deligne, see \cite{MR972343, KSDefTheory}.
\end{rems}

\section{Strictly unital \Ai-algebras}
In this section, given a graded module $A$, we construct a dg-Lie
algebra whose Maurer-Cartan elements are the strictly unital
\Ai-structures on $A$. We first use this to recover Positselski's
construction of a functorial curved bar construction from a strictly unital
\Ai-algebra, and then use it to show that the reduced Hochschild
cochains control infinitesimal strictly unital deformations.

\subsection{Characterization of strictly unital structures}
\begin{defn}
  Let $A, B$ be graded modules with fixed elements
  $1 \in A_0, 1 \in B_{0}.$
  \begin{enumerate}
  \item An element $\nu = (\nu^n) \in \hhc A A_{-1}$ is \emph{strictly
      unital} (with respect to $1 \in A_{0}$) if
    \[\nu^2[1|a] = a = (-1)^{|a|} \nu^2 [a|1]\] and
    $\nu^n [a_1 | \ldots |a_{i}|1|a_{i+1}|\ldots|a_{n-1}] = 0$ for all
    $a, a_1, \ldots, a_{n-1} \in A,$ where $n \neq 2$ and
    $0 \leq i \leq n$. If $\nu$ is also an \Ai-algebra structure, we say
    $(A, \nu)$ is a strictly unital \Ai-algebra.
  
  \item An element $f = (f^{n}) \in \hhc A B_{0}$ is \emph{strictly
      unital} if $f^{1}[1] = [1]$ and
    $f^{n}[a_1|\ldots |a_i|1|a_{i+1}|\ldots|a_{n-1}] = 0$ for all
    $a_1, \ldots, a_{n-1} \in A,$  $n \geq 2$. If $f$ is also an \Ai-morphism, we say it is a
    strictly unital \Ai-morphism.
  \end{enumerate}
\end{defn}

For our main results we need to place a further assumption on the pair $(A, 1)$ (that is automatically
satisfied when $k$ is a field).

\begin{defn}
   A \emph{split element} of a graded module $A$ is an element that
  generates a rank one free module. A \emph{graded module with split
    element} is a pair $(A, 1)$ with $1$ a split element in $A$, and a fixed (unlabeled) splitting $A \to k$ of the
  inclusion $k \to A, 1 \mapsto 1.$ An \emph{\Ai-algebra with split
    unit} is a triple $(A,1, \nu)$, such that $(A, 1)$ is a
  graded module with split element, and $(A, \nu)$ is a strictly unital \Ai-algebra  (with respect to
  $1$). If $(A, 1)$ is a graded module with split element, we set $\o A =
A/(k \cdot 1).$ We  consider this as a submodule $\o A
\subseteq A$ via the fixed splitting of $1.$
\end{defn}

If $(A,1)$ and $(B,1)$ are modules with
split elements, then strictly unital elements $f \in \hhc {A} B_0$ are
assumed to preserve the fixed splittings.
In string diagrams,
$\tikz[baseline={([yshift = -.1cm]current bounding
  box.center)}]{\draw[thick] (0,0) -- (0,.5)}$ represents $\Pi \o A$
(previously it denoted $\Pi A$), $\tikz[baseline={([yshift = -.1cm]current bounding
  box.center)}]{\draw[bmod, line width = .5mm] (0,0) -- (0,.5)}$ represents $\Pi \o B,$ and
$\begin{tikzpicture}[baseline={([yshift = -.1cm]current bounding
    box.center)}]\draw[unit, line width = .35mm] (0,0) -- (0,.5);\end{tikzpicture}$
represents $\Pi k$.
  
\begin{defn}\label{def-msu} An \emph{\Ai-algebra with split unit} is a
  triple $(A,1,\nu)$ with $(A,1)$ a module with split element and
  $(A,\nu)$ a strictly unital \Ai-algebra with respect to
  $1$. The \emph{trivial \Ai-algebra with split unit},
  denoted $(A, 1, \msu)$, is defined by $\msu^{n} = 0$ for $n\neq 2$ and
  \begin{displaymath}
    \newcommand{\boxht}{.5cm}
    \newcommand{\hgt}{.6cm}
    \newcommand{\wdth}{.6cm}
    \msu^2 =
    \begin{tikzpicture}
      \newlength{\lengg} \setlength{\lengg}{0pt}\newBoxDiagram[width =
      \wdth/2, box height = \boxht, arm height = \hgt, output height =
      \hgt, label = s^{-1}, coord = {($(0,
        \lengg)$)}]{s} \printNoArms{s}
      \printLabelOnArm{s}{1}{\Pi}\setlength{\lengg}{\boxht + \hgt}
      \newBoxDiagram[width = \wdth, box height = \boxht, arm height =
      \hgt, output height = \hgt,output label = \Pi, label =
      \small{(\cong)}, coord = {($(0,
        \lengg)$)}]{isom} \printNoArms{isom} \printArm{isom}{1}
      \printArm[/unit]{isom}{-1}
    \end{tikzpicture} \hspace{.1cm} - \hspace{.1cm}
    \begin{tikzpicture}
      \setlength{\lengg}{0pt}\newBoxDiagram[width = \wdth/2, box
      height = \boxht, arm height = \hgt, output height = \hgt, label
      = s^{-1}, coord = {($(0,
        \lengg)$)}]{s} \printNoArms{s}
      \printLabelOnArm{s}{1}{\Pi}\setlength{\lengg}{\boxht + \hgt}
      \newBoxDiagram[width = \wdth, box height = \boxht, arm height =
      \hgt, output height = \hgt,output label = \Pi, label =
      \small{(\cong)}, coord = {($(0,
        \lengg)$)}]{isom} \printNoArms{isom} \printArm{isom}{-1}
      \printArm[/unit]{isom}{1}
    \end{tikzpicture} \hspace{.1cm} + \hspace{.1cm}
    \begin{tikzpicture}
      \setlength{\lengg}{0pt}\newBoxDiagram[width = \wdth/2, box
      height = \boxht, arm height = \hgt, output height = \hgt, label
      = s^{-1}, coord = {($(0,
        \lengg)$)}]{s} \printNoArms[/unit output]{s}
      \printLabelOnArm{s}{1}{\Pi}\setlength{\lengg}{\boxht + \hgt}
      \newBoxDiagram[width = \wdth, box height = \boxht, arm height =
      \hgt, output height = \hgt,output label = \Pi, label =
      \small{(\cong)}, coord = {($(0,
        \lengg)$)}]{isom} \printNoArms[/unit output]{isom}
      \printArm[/unit]{isom}{1} \printArm[/unit]{isom}{-1}
    \end{tikzpicture}
    \hspace{.1cm} = \hspace{.1cm}
    \begin{tikzpicture}
      \def\sepw{.2cm} \def\leftw{.4cm} \def\w{.5cm} \def\rightw{.6cm}
                                                  
      \newBoxDiagram [arm height = .5cm, width = \w, coord =
      {($(\sepw + \leftw + \w,
        0)$)}, decoration yshift label = .2cm, label = \msu^2]{f};
       \printNoArms{f};
                                                      \printArm[/unit]{f}{-1};
                                                      \printArm{f}{1};
                                                    \end{tikzpicture}
                                                    \hspace{.1cm} + \hspace{.1cm}
    \begin{tikzpicture}
      \def\sepw{.2cm} \def\leftw{.4cm} \def\w{.5cm} \def\rightw{.6cm}
      \newBoxDiagram [arm height = .5cm, width = \w, coord =
      {($(\sepw + \leftw + \w,
        0)$)}, decoration yshift label = .2cm, label = \msu^2]{f};
       \printNoArms{f};
                                                      \printArm{f}{-1};
                                                      \printArm[/unit]{f}{1};
                                                    \end{tikzpicture}
                                                                                                        \hspace{.1cm} + \hspace{.1cm}
    \begin{tikzpicture}
      \def\sepw{.2cm} \def\leftw{.4cm} \def\w{.5cm} \def\rightw{.6cm}
      \newBoxDiagram [arm height = .5cm, width = \w, coord =
      {($(\sepw + \leftw + \w,
        0)$)}, decoration yshift label = .2cm, label = \msu^2]{f};
       \printNoArms[/unit output]{f};
                                                      \printArm[/unit]{f}{-1};
                                                  \printArm[/unit]{f}{1};
    \end{tikzpicture}
    \in \hhn 2 A A_{-1},
  \end{displaymath}
  where $(\cong)$ denotes the following canonical isomorphisms, respectively: $\Pi k
  \otimes \Pi \o A \xra{\cong} \Pi(k \otimes \Pi \o A) = \Pi^{2} \o
  A$; $\Pi \o A \otimes \Pi k \xra{\cong} \Pi( \Pi \o A \otimes k) =
  \Pi^{2} \o A;$ $\Pi k \otimes \Pi k \xra{\cong} \Pi(k \otimes \Pi k)
  = \Pi^{2} k$ (see
  \ref{sect:notation}.(4) for signs). One checks (carefully,
  evaluating on elements) that
  $\msu \circ \msu = 0,$ and that $\msu$ is strictly unital, thus $(A,1,\msu)$ is an \Ai-algebra with split
  unit.  If $B$ is a graded module with fixed element $1 \in B_{0}$,
  the \emph{trivial strictly unital morphism $\gsu: A \to B$} is
  $\gsu^{1} = \Pi A \onto \Pi k \to \Pi B$ and $\gsu^n = 0$
  for $n \geq 2.$
\end{defn}

\begin{lem}
  \label{lem:desc-of-strictly-unital}
  Let $(A, 1)$ be a module with split element. Every strictly unital element in
  $\hhc A A_{-1}$ is of the form $\mu + \msu$ for a unique
  $\mu \in \hhc {\o A} {A}_{-1}.$ If $B$ is another graded module with
  a fixed element $1 \in B_{0}$, every strictly unital element in
  $\hhc A B_{0}$ is of the form $g + \gsu$ for a unique
  $g \in \hhc {\o A} B_{0}$.
\end{lem}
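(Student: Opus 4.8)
The plan is to reduce the entire statement to elementary linear algebra on the direct summands of $(\Pi A)^{\otimes n}$; no $\text{A}_\infty$-relation is used, so the argument applies to an arbitrary strictly unital element, not just to $\text{A}_\infty$-structures. The split element gives a module decomposition $A = k\cdot 1 \oplus \o A$, hence after applying $\Pi$ a decomposition $\Pi A = \Pi k \oplus \Pi \o A$. Taking $n$-fold tensor powers expresses $(\Pi A)^{\otimes n}$ as a direct sum of $2^n$ summands, one for each word in the two letters $\Pi k$ and $\Pi \o A$, and a homogeneous map $\nu^n \in \hhn n A A$ is determined by its restrictions to these summands. I would split the summands into two groups: the single summand $(\Pi \o A)^{\otimes n}$, in which no tensor factor is the unit line, and the remaining summands, in which at least one factor is $\Pi k$.

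First I would show that strict unitality completely forces $\nu$ on every summand containing at least one $\Pi k$ factor. For $n \neq 2$ the conditions $\nu^n[\cdots|1|\cdots] = 0$ say exactly that $\nu^n$ vanishes on every such summand, matching $\msu^n = 0$. For $n = 2$ the unit-containing summands are $\Pi k \otimes \Pi \o A$, $\Pi \o A \otimes \Pi k$, and $\Pi k \otimes \Pi k$, and the conditions $\nu^2[1|a] = a$ and $(-1)^{|a|}\nu^2[a|1] = a$ (together with their specialization to $a = 1$, which pins the $\Pi k \otimes \Pi k$ piece consistently) determine $\nu^2$ there. I would then check that these forced values coincide with the three terms defining $\msu^2$ in Definition \ref{def-msu}. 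Crucially, strict unitality places no constraint on the $\Pi \o A \otimes \Pi \o A$ component, so that piece remains free.

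Having pinned down the unit-containing part, the only remaining data is the collection of restrictions $\nu^n|_{(\Pi \o A)^{\otimes n}} \in \Hom{}{(\Pi \o A)^{\otimes n}}{\Pi A}$, which are unconstrained; assembling them over $n$ defines $\mu = (\mu^n) \in \hhc{\o A}{A}_{-1}$, viewed inside $\hhc A A$ by extending each $\mu^n$ by zero along the projection $(\Pi A)^{\otimes n} \twoheadrightarrow (\Pi \o A)^{\otimes n}$. Since $\msu^n = 0$ for $n \neq 2$ and $\msu^2$ has no component on $\Pi \o A \otimes \Pi \o A$, the sum $\mu + \msu$ agrees with $\nu$ on $(\Pi \o A)^{\otimes n}$ and, by the previous paragraph, on every unit-containing summand as well; hence $\nu = \mu + \msu$. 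Uniqueness is immediate: $\mu = \nu - \msu$ is forced, and the requirement $\mu \in \hhc{\o A}{A}$ just records that $\nu - \msu$ vanishes on all unit-containing summands, which is precisely what the forcing step establishes.

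The morphism statement follows the same template, with $\hhc A B_0$ in place of $\hhc A A_{-1}$ and $\gsu$ in place of $\msu$. Here the only unit-containing component not killed outright sits in tensor degree one: the decomposition $\Pi A = \Pi k \oplus \Pi \o A$ shows $f^1$ is pinned down on $\Pi k$ by $f^1[1] = [1]$, matching $\gsu^1 = (\Pi A \onto \Pi k \to \Pi B)$, while for $n \geq 2$ strict unitality forces $f^n$ to vanish on every unit-containing summand, matching $\gsu^n = 0$. The free data is again $f^n|_{(\Pi \o A)^{\otimes n}}$, defining $g \in \hhc{\o A}{B}_0$, and one concludes $f = g + \gsu$ uniquely as before. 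I expect the only genuinely delicate step to be the sign bookkeeping when matching the forced value of $\nu^2$ against the three terms of $\msu^2$, since these involve the canonical isomorphisms of \ref{sect:notation}.(4) and the factors $(-1)^{|a|}$; everything else is a routine bookkeeping of direct summands.
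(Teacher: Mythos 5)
Your proposal is correct and is essentially the paper's own argument: the paper simply sets $\mu = \nu - \msu$ and observes that strict unitality of $\nu$ together with the definition of $\msu$ forces $\mu$ to vanish on every term containing a $1$, hence $\mu \in \hhc{\o A}{A}_{-1}$; your summand-by-summand bookkeeping is just a more explicit rendering of that one observation. The sign check you flag for $\msu^2$ against $\nu^2[1|a]=a=(-1)^{|a|}\nu^2[a|1]$ is indeed the only point requiring care, and the paper likewise leaves it to the reader.
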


\begin{proof}
For a stricty unital element $\nu \in \hhc A A_{-1},$ set $\mu = \nu -
\msu \in \hhc A A_{-1}.$ By definition, $\mu$ is zero on any term
containing a $1,$ and thus $\mu \in \hhc {\o A} A_{-1}.$ The proof for
morphisms is similar (and easier).
\end{proof}

\begin{rems}
If we replace the category of $k$-modules by a symmetric monoidal
category, we can define $\msu$ using the diagrams above (where $k$ is
the unit of the category), and
use the lemma to define strictly unital elements of $\hhc A
A_{-1}$ and $\hhc A B_{0}$, when $A, B$ are objects in the category.
\end{rems}

We will use without remark that if $(A, 1)$ is a graded module
with split element, the splitting $A = \o A \oplus k$ induces a
splitting $\hhc {\o A} A = \hhc {\o A} {\o A} \oplus \hhc {\o A} k.$

\begin{defn}
  A strictly unital element $\mu + \msu \in \hhc {A} A_{-1},$ with
  $\mu = \o \mu + h \in \hhc {\o A}{\o A}_{-1} \oplus \hhc {\o A} k_{-1} = \hhc
  {\o A} A_{-1},$ is \emph{augmented} if $h = 0,$ i.e., if $\mu$ is in $\hhc
  {\o A} {\o A}.$ (In this case, if $\mu + \msu$ is an \Ai-algebra structure, the fixed
  splitting $A \to k$ is a strict \Ai-morphism, called the
  augmentation.)
\end{defn}

We note the term $h$ measuring the lack of augmentation is in
\[\ds
\hhc {\o A} k_{-1} = \prod_{n \geq 1} \Hom {} {(\Pi \o A)^{\otimes n}}
{\Pi k}_{-1} = \prod_{n \geq 1}
\operatorname{Hom} \left ( \left ( \left (\Pi \o
  A\right)^{\otimes n}\right )_{2},k \right ).\]

\begin{ex}\label{ex:reslns-aug}
Let $(A, 1)$ be a graded module with split element such that $A_{i}
= 0$ for $i < 0,$ $A_{0} = k,$ and $1 \in A_{0}$ is the unit in $k.$
Let $\mu = \o \mu + h \in \hhc {\o A} A$ be an element such that $(A,
1, \nu = \mu + \msu)$ is an \Ai-algebra with split unit.
Since $\o A = A_{\geq 1},$ it follows that $\left ( \left (\Pi \o
  A\right)^{\otimes n}\right )_{2} = 0$ for $n \geq 2.$ Thus $h^{n} =
0$ for all $n \geq 2;$ the map $h^{1}$ makes the following diagram commutative:
\begin{displaymath}
  \begin{tikzpicture}
      \matrix (m) [matrix of math nodes,row sep=3em,column sep=4em,minimum width=2em]{
     (\Pi A)_{2} & (\Pi k)_{1} \\
    A_{1} & A_{0}. \\
  };
 \path[-stealth, auto] (m-1-1) edge node {$\cong$}
 (m-2-1);
  \path[-stealth, auto]     (m-1-1)   edge node[swap] {$s^{-1}$} (m-2-1);
 \path[-stealth, auto] (m-1-2) edge node[swap] {$\cong$}
 (m-2-2);
   \path[-stealth, auto]     (m-1-2)   edge node {$s^{-1}$} (m-2-2);
 \path[-stealth, auto]     (m-1-1)  edge node {$h^1$} (m-1-2);
  \path[-stealth, auto]     (m-2-1) edge node[swap] {$(m^1)_1$} (m-2-2);
  \end{tikzpicture}
\end{displaymath}
Here $m^1$ is $s^{-1} \nu^1 s$, see \ref{ex:dg-alg-is-ainf-alg}. Note
that the image of $(m^1)_1$ is an ideal $I$ in $k = A_0$. The \Ai-algebra $(A, 1, \mu + \msu)$ is
augmented exactly when $h^1 = 0,$ i.e., $I = 0.$
\end{ex}

By Lemma \ref{lem:diagrams-for-sual} below, $[\o \mu, \msu] = 0$ for all
$\o \mu \in \hhc {\o A} {\o A}_{-1}$, and it follows that Maurer-Cartan elements of
$\hhc {\o A} {\o A}$ (i.e., nonunital \Ai-algebra structures on
$\o A$) correspond  to
augmented \Ai-algebra structures on $A$, via the map $\o \mu \mapsto
\o \mu + \msu$. The following generalizes this to all
strictly unital \Ai-algebras.

\begin{thm}\label{thm:main-thm-sunal}
  Let $(A, 1)$ be a module with split element, and $\msu \in \hhc A A_{-1}$ the trivial
  strictly unital \Ai-algebra structure defined in \ref{def-msu}. The submodule $\hhc {\o A} A$
  is a graded Lie subalgebra of $\hhc A A$, and the derivation
  $[\msu, -]$ of $\hhc A A$ preserves $\hhc {\o A} A$. The
  Maurer-Cartan elements of the resulting dg-Lie algebra
  $(\hhc {\o A} A, [\msu, -])$ correspond to the strictly unital
  \Ai-structures on $(A,1)$ via
  \begin{displaymath}
    \o \mu + h \mapsto \o \mu + h + \msu.
  \end{displaymath}
  (See \ref{defn:MC-things} for the definition of Maurer-Cartan
elements.)
\end{thm}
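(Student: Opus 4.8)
The plan is to prove all three assertions at once by realizing $\hhc{\o A}{A}$ concretely inside $\hhc A A$. Using the fixed splitting $A = \o A \oplus k$, hence $\Pi A = \Pi \o A \oplus \Pi k$, I identify $\hhc{\o A}{A}$ with the submodule of those $f = (f^n) \in \hhc A A$ whose components $f^n$ vanish on every tensor $[x_1|\cdots|x_n]$ having at least one leg equal to the unit $[1] \in \Pi k$ (extend a map out of $(\Pi\o A)^{\otimes n}$ by zero on the complement). With this description the Lie subalgebra claim is immediate: for $f,g \in \hhc{\o A}{A}$ and an input to $(g\circ f)^n = \sum_{i,j} g^i(1^{\otimes j}\otimes f^{n-i+1}\otimes 1^{\otimes i-j-1})$ having a unit leg, that leg in each summand either lies in the block fed to $f^{n-i+1}$, killing the summand since $f \in \hhc{\o A}{A}$, or is carried by an identity leg into $g^i$, killing the summand since $g \in \hhc{\o A}{A}$. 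Thus $g\circ f \in \hhc{\o A}{A}$, and symmetrically $f \circ g$, so $\hhc{\o A}{A}$ is a pre-Lie, hence graded Lie, subalgebra.

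The crux is showing $[\msu,-]$ preserves $\hhc{\o A}{A}$. Because $\msu \circ \msu = 0$ (Definition \ref{def-msu}), the pair $(A,\msu)$ is a nonunital \Ai-algebra, so $(\hhc A A,[\msu,-])$ is its dg-Lie algebra of Hochschild cochains; in particular $[\msu,-]$ is already known to be a differential of the ambient $\hhc A A$. It remains to check that $[\msu,f] = \msu\circ f - (-1)^{|\msu||f|}f\circ\msu$ vanishes on unit legs whenever $f$ does — this is the classical fact that, for a \emph{strict} unit, the normalized cochains form a subcomplex, and it is exactly where the defining property of $\msu$ enters. I would verify it by evaluating on a tensor with a distinguished unit leg in position $m$. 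Since $\msu^i = 0$ for $i\neq 2$, only $\msu\circ f = \msu^2(f^{n-1}\otimes 1) + \msu^2(1\otimes f^{n-1})$ and $f \circ \msu = \sum_{j} f^{n-1}(1^{\otimes j}\otimes \msu^2 \otimes 1^{\otimes n-2-j})$ survive. A term of $\msu\circ f$ is nonzero only when the unit sits at an end of the tensor, while a term of $f\circ\msu$ is nonzero only when the unit is one of the two legs absorbed by $\msu^2$ against a non-unit; using $\msu^2[1|a]=a$ and $\msu^2[a|1]=(-1)^{|a|}a$, every surviving term equals $\pm f^{n-1}$ applied to the tensor with the unit deleted.

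I expect the main obstacle to be the cancellation of these surviving contributions, together with its sign bookkeeping via \ref{rem:signs_for_diags}. For an interior unit the two neighbouring $\msu^2$-absorptions appearing in $f\circ\msu$ cancel against each other (and $\msu\circ f$ contributes nothing), whereas for a unit at either end the single surviving term of $\msu\circ f$ cancels the single surviving term of $f\circ\msu$ in the commutator. Tracking the Koszul signs is the only real work; granting it, $[\msu,f]\in \hhc{\o A}{A}$, so $(\hhc{\o A}{A},[\msu,-])$ is a dg-Lie subalgebra of $(\hhc A A,[\msu,-])$.

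Finally I would establish the correspondence. Given $\mu = \o\mu + h \in \hhc{\o A}{A}_{-1}$, set $\nu = \mu + \msu$. Since $\mu$ vanishes on unit legs and $\msu^n = 0$ for $n\neq 2$ with $\msu^2$ implementing the strict unit, $\nu$ is strictly unital, and conversely Lemma \ref{lem:desc-of-strictly-unital} writes every strictly unital element of $\hhc A A_{-1}$ uniquely in this form; so the map $\o\mu + h \mapsto \o\mu + h + \msu$ is a bijection onto strictly unital elements. It then suffices to match $\nu\circ\nu = 0$ with the Maurer-Cartan equation. Expanding by bilinearity and using $\msu\circ\msu = 0$ gives $\nu\circ\nu = \mu\circ\mu + (\msu\circ\mu + \mu\circ\msu)$, and since $|\msu| = |\mu| = -1$ one has $\msu\circ\mu + \mu\circ\msu = [\msu,\mu]$ and $\mu\circ\mu = \frac{1}{2}[\mu,\mu]$, whence $\nu\circ\nu = [\msu,\mu] + \frac{1}{2}[\mu,\mu]$. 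Therefore $\nu$ is a strictly unital \Ai-structure (i.e. $\nu\circ\nu = 0$) if and only if $\mu$ is a Maurer-Cartan element of $(\hhc{\o A}{A},[\msu,-])$ in the sense of Definition \ref{defn:MC-things}, which is precisely the claimed correspondence.
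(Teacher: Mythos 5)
Your proposal is correct, and its overall skeleton matches the paper's: write strictly unital elements uniquely as $\mu + \msu$ (Lemma \ref{lem:desc-of-strictly-unital}), establish the two closure properties, and expand $(\mu+\msu)\circ(\mu+\msu)$ to identify $\nu\circ\nu=0$ with $\tfrac12[\mu,\mu]+[\msu,\mu]=0$. Where you genuinely diverge is in how closure is proved. The paper decomposes $\hhc {\o A} A = \hhc {\o A}{\o A}\oplus \hhc{\o A}k$ and, in Lemma \ref{lem:diagrams-for-sual}, computes every pairwise bracket of the pieces explicitly --- in particular the exact formula $[\msu,h]=\msu^2(sh\otimes 1)+\msu^2(1\otimes sh)$, which is not merely a closure statement but an identity that gets reused later (Lemma \ref{lem:ad-for-tensor-coalgebra} and Corollary \ref{cor:sual-ai-alg-equiv-to-curved-bar}, where $[\msu,h]$ is matched with $\ad{s^{-1}h}$ to produce the curved bar construction). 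You instead treat $\hhc{\o A}A$ uniformly as the normalized cochains --- those vanishing on every tensor with a unit leg --- and prove closure under $\circ$ and under $[\msu,-]$ by the classical interior/boundary cancellation. This is a clean, unified argument and is in fact the same computation the paper performs inside its proof of Lemma \ref{lem:diagrams-for-sual}.(1)--(2) (the displayed identity there is exactly your cancellation of the two adjacent $\msu^2$-absorptions of an interior unit); your version buys brevity for this theorem at the cost of not extracting the explicit value of $[\msu,h]$ needed downstream. Two small points to make explicit when you carry out the sign check: first, in the boundary case where $f$ has a component landing in $\Pi k$ (the $h$-part), the surviving term of $\msu\circ f$ goes through the third summand $\msu^2|_{\Pi k\otimes\Pi k}$ of Definition \ref{def-msu}, and it is precisely that summand which cancels the boundary term of $f\circ\msu$ --- so the uniform statement "$[\msu,f]$ kills unit legs for all $f\in\hhc{\o A}A$" really does depend on all three pieces of $\msu^2$; second, your identities $\mu\circ\mu=\tfrac12[\mu,\mu]$ and $[\msu,\msu]=0\Rightarrow[\msu,-]^2=0$ implicitly use $1/2\in k$ (or the pre-Lie identity), exactly as the paper's own proof and Definition \ref{defn:MC-things} do, so this is not a defect relative to the source.
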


 We need the following lemma for the proof of Theorem \ref{thm:main-thm-sunal}.

\begin{lem}
  \label{lem:diagrams-for-sual} Let
  $\o \mu, \o \mu' \in \hhc {\o A} {\o A}_{-1}$ and $h, h' \in \hhc {\o A}
  {k}_{-1}$ be arbitrary elements. The following hold:
  \begin{enumerate}
  \item $[\o \mu, \msu] = 0 = [h, h'].$ \vspace{.1in}
  \item
    $[\msu, h] = \begin{tikzpicture}[baseline={(current bounding
        box.center)}] \def\w{1cm} \def\gstart{-.35} \def\relw{.6}
      \def\sepw{.1} \newBoxDiagram[width/.expand once = \w, arm height
      = 1.5cm, label = \msu^{2}]{f}; \printNoArms{f}; \f.coord at
      top(gcoor, \gstart); \newBoxDiagram[width = \w*\relw, arm height
      = .5cm, coord = {(gcoor)}, output height = .5cm, label = h]{g};
      \print[/unit output]{g}; \printArmSep{g}; \printArm{f}{.8}
      \def\smallsep{.08}
      \pgfmathsetmacro{\var}{\gstart - \relw - \sepw}
      \pgfmathsetmacro{\var}{-1+\smallsep}
      \pgfmathsetmacro{\vara}{\gstart - \relw - \sepw - \smallsep}

      \pgfmathsetmacro{\var}{\gstart + \relw + \sepw}
      %
      \pgfmathsetmacro{\var}{1-\smallsep}
      \pgfmathsetmacro{\vara}{\gstart + \relw + \sepw + \smallsep}
    \end{tikzpicture} \hspace{.1cm} + \hspace{.1cm}
    \begin{tikzpicture}[baseline={(current bounding box.center)}]
      \def\w{1cm} \def\gstart{.35} \def\relw{.6} \def\sepw{.1}
      \newBoxDiagram[width/.expand once = \w, arm height = 1.5cm,
      label = \msu^{2}]{f}; \printNoArms{f}; \f.coord at top(gcoor,
      \gstart); \newBoxDiagram[width = \w*\relw, arm height = .5cm,
      coord = {(gcoor)}, output height = .5cm, label = h]{g};
      \print[/unit output]{g}; \printArmSep{g}; \printArm{f}{-.8}
      \def\smallsep{.08}
      \pgfmathsetmacro{\var}{\gstart - \relw - \sepw}
      \pgfmathsetmacro{\var}{-1+\smallsep}
      \pgfmathsetmacro{\vara}{\gstart - \relw - \sepw - \smallsep}

      \pgfmathsetmacro{\var}{\gstart + \relw + \sepw}
      %
      \pgfmathsetmacro{\var}{1-\smallsep}
      \pgfmathsetmacro{\vara}{\gstart + \relw + \sepw + \smallsep}
    \end{tikzpicture}
    \in \hhc {\o A} {\o A}.$

  \item $\displaystyle [\o \mu, \o \mu'] = \sum_j \,
    \, \begin{tikzpicture}[baseline={(current bounding box.center)}]
      \def\w{1.3cm} \def\gstart{-.1} \def\relw{.5} \def\sepw{.1}
      \newBoxDiagram[width/.expand once = \w, arm height = 1.5cm,
      label = \o \mu]{f}; \print{f}; \f.coord at top(gcoor,
      \gstart); \newBoxDiagram[width = \w*\relw, arm height = .5cm,
      coord = {(gcoor)}, output height = .5cm, label = \o \mu']{g};
      \print{g}; \printArmSep{g};

      \def\smallsep{.08}
      \pgfmathsetmacro{\var}{\gstart - \relw - \sepw}
      \printArm{f}{\var}; \printDec[/decorate, decorate left = -1,
      decorate right = \var]{f}{j};
      \pgfmathsetmacro{\var}{-1+\smallsep}
      \pgfmathsetmacro{\vara}{\gstart - \relw - \sepw - \smallsep}
      \printArmSep[arm sep left = \var, arm sep right = \vara]{f}

      \pgfmathsetmacro{\var}{\gstart + \relw + \sepw}
      \printArm{f}{\var}
      \pgfmathsetmacro{\var}{1-\smallsep}
      \pgfmathsetmacro{\vara}{\gstart + \relw + \sepw + \smallsep}
      \printArmSep[arm sep left = \vara, arm sep right = \var]{f}
    \end{tikzpicture} \hspace{.1cm} + \hspace{.1cm} \sum_j \,
    \, \begin{tikzpicture}[baseline={(current bounding box.center)}]
      \def\w{1.3cm} \def\gstart{-.1} \def\relw{.5} \def\sepw{.1}
      \newBoxDiagram[width/.expand once = \w, arm height = 1.5cm,
      label = \o \mu']{f}; \print{f}; \f.coord at top(gcoor,
      \gstart); \newBoxDiagram[width = \w*\relw, arm height = .5cm,
      coord = {(gcoor)}, output height = .5cm, label = \o \mu]{g};
      \print{g}; \printArmSep{g};

      \def\smallsep{.08}
      \pgfmathsetmacro{\var}{\gstart - \relw - \sepw}
      \printArm{f}{\var}; \printDec[/decorate, decorate left = -1,
      decorate right = \var]{f}{j};
      \pgfmathsetmacro{\var}{-1+\smallsep}
      \pgfmathsetmacro{\vara}{\gstart - \relw - \sepw - \smallsep}
      \printArmSep[arm sep left = \var, arm sep right = \vara]{f}

      \pgfmathsetmacro{\var}{\gstart + \relw + \sepw}
      \printArm{f}{\var}
      \pgfmathsetmacro{\var}{1-\smallsep}
      \pgfmathsetmacro{\vara}{\gstart + \relw + \sepw + \smallsep}
      \printArmSep[arm sep left = \vara, arm sep right = \var]{f}
    \end{tikzpicture} \in \hhc {\o A} {\o A}.$

  \item
    $\displaystyle [\o \mu, h] = \sum_j \,
    \, \begin{tikzpicture}[baseline={(current bounding box.center)}]
      \def\w{1.3cm} \def\gstart{-.1} \def\relw{.5} \def\sepw{.1}
      \newBoxDiagram[width/.expand once = \w, arm height = 1.5cm,
      label = h]{f}; \print[/unit output]{f}; \f.coord at
      top(gcoor, \gstart); \newBoxDiagram[width = \w*\relw, arm height
      = .5cm, coord = {(gcoor)}, output height = .5cm, label = \o
      \mu]{g}; \print{g}; \printArmSep{g};

      \def\smallsep{.08}
      \pgfmathsetmacro{\var}{\gstart - \relw - \sepw}
      \printArm{f}{\var}; \printDec[/decorate, decorate left = -1,
      decorate right = \var]{f}{j};
      \pgfmathsetmacro{\var}{-1+\smallsep}
      \pgfmathsetmacro{\vara}{\gstart - \relw - \sepw - \smallsep}
      \printArmSep[arm sep left = \var, arm sep right = \vara]{f}

      \pgfmathsetmacro{\var}{\gstart + \relw + \sepw}
      \printArm{f}{\var}
      \pgfmathsetmacro{\var}{1-\smallsep}
      \pgfmathsetmacro{\vara}{\gstart + \relw + \sepw + \smallsep}
      \printArmSep[arm sep left = \vara, arm sep right = \var]{f}
    \end{tikzpicture} \in \hhc {\o A} k.$
  \end{enumerate}
\end{lem}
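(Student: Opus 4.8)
The plan is to reduce every bracket to its tensor-homogeneous components by bilinearity, expand $[x,y] = x\circ y - (-1)^{|x||y|}y\circ x$ using the Gerstenhaber product, and then evaluate on a pure tensor $[x_1|\ldots|x_N]$, sorting the resulting terms according to which entries $x_i$ equal the split unit $1$. Two facts drive everything. First, $\o\mu,\o\mu',h,h'$ all lie in $\hhc{\o A}{A}$ and so annihilate any tensor having an entry in $\Pi k$. Second, $\msu$ is concentrated in arity $2$, with $\msu^2$ vanishing on $\Pi\o A\otimes\Pi\o A$ and satisfying $\msu^2[1|a]=a$ and $\msu^2[a|1]=(-1)^{|a|}a$. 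All signs are read off from the conventions in Remark \ref{rem:signs_for_diags}.

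The four quick computations go as follows. For $[h,h']$ each composite feeds the $\Pi k$-valued output of one factor into the other, which annihilates unit entries, so $h\circ h'=h'\circ h=0$. For $[\o\mu,\o\mu']$ there are no unit interactions at all, since $\o\mu'$ outputs into $\Pi\o A$, which $\o\mu$ accepts; hence the bracket is literally $\o\mu\circ\o\mu' + \o\mu'\circ\o\mu$, and unwinding the Gerstenhaber product yields the two displayed sums over $j$. For $[\o\mu,h]=\o\mu\circ h + h\circ\o\mu$, the composite $\o\mu\circ h$ would feed the unit-valued output of $h$ into $\o\mu$ and so vanishes, leaving $[\o\mu,h]=h\circ\o\mu$, the single displayed diagram (output in $\Pi k$, hence in $\hhc{\o A}{k}$). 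For $[\msu,h]=\msu\circ h + h\circ\msu$, evaluating on a tensor of non-unit entries makes $h\circ\msu$ vanish because $\msu^2$ is then always fed two elements of $\Pi\o A$; meanwhile $\msu\circ h$ inserts $h$ into the two slots of $\msu^2$, and since $h$ outputs a unit the identities $\msu^2(\mathrm{unit}\otimes a)=a$ and $\msu^2(a\otimes\mathrm{unit})=(-1)^{|a|}a$ produce exactly the two displayed terms, both lying in $\Pi\o A$.

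The hard part, which I expect to be the main obstacle, is $[\o\mu,\msu]=\o\mu\circ\msu+\msu\circ\o\mu$. On a tensor with no unit entry both summands vanish, since $\msu^2$ is then always evaluated on two elements of $\Pi\o A$; on a tensor with two or more units at least one unit is always routed directly into $\o\mu$ and kills the term. The only surviving case is a single unit at some position $p$: here $\o\mu\circ\msu$ contributes the term in which $\msu^2$ absorbs the unit with its left neighbor and the term in which it absorbs the unit with its right neighbor, while $\msu\circ\o\mu$ contributes only when $p$ is at an end. I would check, using the sign rules of Remark \ref{rem:signs_for_diags} together with $\msu^2[1|a]=a$ and $\msu^2[a|1]=(-1)^{|a|}a$, that the interior left- and right-absorption terms cancel in pairs and that the two boundary terms of $\o\mu\circ\msu$ cancel the two terms of $\msu\circ\o\mu$. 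Keeping track of the Koszul signs introduced as the unit is transported past $\o\mu$ and its neighbors is the genuinely delicate step. Finally, running this same single-unit cancellation on a unit-bearing tensor also confirms that the brackets computed in (2) and (4) vanish on such tensors, so that they land in $\hhc{\o A}{\o A}$ and $\hhc{\o A}{k}$ as claimed.
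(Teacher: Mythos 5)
Your proposal is correct and follows essentially the same route as the paper: the identities in (1) (second half), (3), and (4) are immediate because $\o\mu,\o\mu',h,h'$ kill unit entries, while $[\o\mu,\msu]=0$ and part (2) are handled by sorting terms according to the position of the unit and verifying that the interior left/right absorptions of $\msu^2$ cancel in adjacent pairs, leaving boundary terms that cancel against $\msu\circ\o\mu$ (resp.\ produce the displayed diagrams). The sign verification you defer to Remark \ref{rem:signs_for_diags} is exactly the step the paper likewise leaves to the reader.
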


\begin{proof}
All of the equalities are automatic except for the first half of (1),
$[\o \mu, \msu ] = 0,$ and (2). To show (1), one can first check that for
all $j \geq 1,$ the following holds:
\begin{displaymath}
\begin{tikzpicture}[baseline={([yshift = -.35cm]current bounding
  box.center)}]
                                                      \def\sepw{.2cm}
                                                      \def\leftw{.4cm}
                                                      \def\w{.5cm}
                                                      \def\rightw{.6cm}
                                                   
                                                      \newBoxDiagram
                                                      [box height =
                                                      0cm, output
                                                      height = 0cm,
                                                      arm height =
                                                      1.5cm, width =
                                                      \leftw,
                                                      decoration
                                                      yshift label =
                                                      .2cm]{left};
                                                      \newBoxDiagram
                                                      [arm height =
                                                      .5cm, width =
                                                      \w, coord =
                                                      {($(\sepw +
                                                        \leftw + \w,
                                                        0)$)},
                                                      decoration
                                                      yshift label =
                                                      .2cm, label =
                                                      \msu^2]{f};
                                                      \newBoxDiagram
                                                      [box height =
                                                      0cm, output
                                                      height = 0cm,
                                                      arm height =
                                                      1.5cm, width =
                                                      \rightw, coord =
                                                      {(\leftw +
                                                        2*\sepw+2*\w +
                                                        \rightw, 0)},
                                                      decoration
                                                      yshift label =
                                                      .2cm]{right};
                                                      \print{left};
                                                      \printArmSep{left};
                                                      \printDec[/decorate]{left}{j};
                                                      \printNoArms{f};
                                                      \printArm[/unit]{f}{-1};
                                                      \printArm{f}{1};
                                                      \print{right};
                                                      \printArmSep{right};
                                                             \end{tikzpicture}
  \hspace{.2cm} + \hspace{.2cm} 
    \begin{tikzpicture}[baseline={([yshift = -.35cm]current bounding
  box.center)}]
                                                      \def\sepw{.2cm}
                                                      \def\leftw{.4cm}
                                                      \def\w{.5cm}
                                                      \def\rightw{.6cm}
                                                   
                                                      \newBoxDiagram
                                                      [box height =
                                                      0cm, output
                                                      height = 0cm,
                                                      arm height =
                                                      1.5cm, width =
                                                      \leftw,
                                                      decoration
                                                      yshift label =
                                                      .2cm]{left};
                                                      \newBoxDiagram
                                                      [arm height =
                                                      .5cm, width =
                                                      \w, coord =
                                                      {($(\sepw +
                                                        \leftw + \w,
                                                        0)$)},
                                                      decoration
                                                      yshift label =
                                                      .2cm, label =
                                                      \msu^2]{f};
                                                      \newBoxDiagram
                                                      [box height =
                                                      0cm, output
                                                      height = 0cm,
                                                      arm height =
                                                      1.5cm, width =
                                                      \rightw, coord =
                                                      {(\leftw +
                                                        2*\sepw+2*\w +
                                                        \rightw, 0)},
                                                      decoration
                                                      yshift label =
                                                      .2cm]{right};
                                                      \print{left};
                                                      \printArmSep{left};
                                                      \printDec[/decorate]{left}{j-1};
                                                      \printNoArms{f};
                                                      \printArm{f}{-1};
                                                      \printArm[/unit]{f}{1};
                                                      \print{right};
                                                      \printArmSep{right};
                                                             \end{tikzpicture}
                                                             \hspace{.1cm}
                                                             = 0.
                                                           \end{displaymath}
(To check this one can evaluate both diagrams on the element
$[a_{1}|\ldots|a_{j}|1|a_{j+1}|\ldots|a_{n}]$, using the sign
conventions of \ref{rem:signs_for_diags}.) The above implies that
\begin{displaymath}
\o \mu \circ \msu = \begin{tikzpicture}[baseline={(current bounding box.center)}]
      \def\w{1.3cm} \def\gstart{-.5} \def\relw{.5} \def\sepw{.1}
      \newBoxDiagram[width/.expand once = \w, arm height = 1.5cm,
      label = \o \mu]{f}; \printNoArms{f}; \printArm{f}{1}; \f.coord at top(gcoor,
      \gstart); \newBoxDiagram[width = \w*\relw, arm height = .5cm,
      coord = {(gcoor)}, output height = .5cm, label = \msu^{2}]{g};
      \printNoArms{g}; \printArm[/unit]{g}{-1}; \printArm{g}{1};

      \def\smallsep{.08}

      \pgfmathsetmacro{\var}{\gstart + \relw + \sepw}
      \printArm{f}{\var}
      \pgfmathsetmacro{\var}{1-\smallsep}
      \pgfmathsetmacro{\vara}{\gstart + \relw + \sepw + \smallsep}
      \printArmSep[arm sep left = \vara, arm sep right = \var]{f}
    \end{tikzpicture} +
    \begin{tikzpicture}[baseline={(current bounding box.center)}]
      \def\w{1.3cm} \def\gstart{.5} \def\relw{.5} \def\sepw{.1}
      \newBoxDiagram[width/.expand once = \w, arm height = 1.5cm,
      label = \o \mu]{f}; \printNoArms{f}; \printArm{f}{-1}; \f.coord at top(gcoor,
      \gstart); \newBoxDiagram[width = \w*\relw, arm height = .5cm,
      coord = {(gcoor)}, output height = .5cm, label = \msu^{2}]{g};
      \printNoArms{g}; \printArm{g}{-1}; \printArm[/unit]{g}{1}; \printArmSep{g};

      \def\smallsep{.08}
      \pgfmathsetmacro{\var}{\gstart - \relw - \sepw}
      \printArm{f}{\var};
      \pgfmathsetmacro{\var}{-1+\smallsep}
      \pgfmathsetmacro{\vara}{\gstart - \relw - \sepw - \smallsep}
      \printArmSep[arm sep left = \var, arm sep right = \vara]{f}

    \end{tikzpicture}
  \end{displaymath}
  and one checks this is $-\msu \circ \o \mu$ (by evaluating on
  elements as above). The proof of $(2)$ is similar.
\end{proof}

\begin{proof}[Proof of Theorem \ref{thm:main-thm-sunal}.] 
  For $\o \mu + h, \o \mu' + h' \in \hhc {\o A} A$, we have $[\o
  \mu + h, \o \mu' + h'] = [\o \mu, \o \mu'] + [\o \mu', h] + [\o \mu,
  h'] \in \hhc {\o A} {A}$, using the previous lemma. Thus $\hhc {\o
    A} A$ is a graded subalgebra of $\hhc A A$. Again using the lemma,
  we have $[\msu,
  \o \mu + h] = [\msu, h] \in \hhc {\o A} {\o A}$, and thus $\hhc {\o
    A} A$ is preserved by $[\msu, -]$.

  A strictly unital element $\o \mu + h + \msu$ in
  $\hhc A A_{-1}$ is an \Ai-algebra structure exactly when $[\o \mu + h + \msu, \o \mu + h + \msu] = [\o \mu + h, \o \mu + h]
    + 2[\msu, \o \mu + h]$ is zero, i.e., $\frac{1}{2}[\o \mu + h, \o
    \mu + h] + [\msu, \o \mu + h] = 0.$ And this is the definition of
    $\o \mu + h$ being a Maurer-Cartan element of $(\hhc {\o A} A,
    [\msu, -]).$
\end{proof}

\begin{rems}
  The dg-Lie algebra $(\hhc {\o A} A, [\msu, -])$ is an adaptation 
 of a construction of Schlessinger and Stasheff \cite[\S
  2]{SchlStash}, who use the cofree Lie coalgebra where we use the
  cofree coassociative 
  coalgebra $\Tco {\Pi \o A}$.  To match the definitions, one can
  check that the graded subalgebra
  $\hhc {\o A} {\o A}$ of $\hhc A A$ acts on the $k$-module
  $\hhc {\o A} k,$ via Lemma \ref{lem:diagrams-for-sual}.(4), and the
  resulting semi-direct product $\hhc {\o A} {\o A} \oplus \hhc {\o A} k$ is
  isomorphic as a graded Lie algebra to $\hhc {\o A} A$; one then checks the derivations agree.
\end{rems}

\subsection{Curved bar construction}

\begin{defn}\label{defn:ad-and-curved-coalg}
  Let $C$ be a graded coalgebra and $\xi \in \Hom {} C k$ a
  homogeneous linear map. Define $\ad{\xi} \in \Hom {} C C_{|\xi|}$ to
  be
  \begin{displaymath}
    \ad{\xi} := \begin{aligned}
      \left( C \xra{\Delta} C \otimes C \xra{\xi \otimes 1} k \otimes C
        \cong C \right)
      -\left( C \xra{\Delta} C \otimes C \xra{1 \otimes \xi} C \otimes k
        \cong C \right).
    \end{aligned}
  \end{displaymath}
  (One checks this is a coderivation of $C$.) A \emph{curved
    dg-coalgebra} is a triple $(C,d, \xi)$, with $C$ a graded
  coalgebra, $d: C \to C$ a coderivation of degree $-1$, and
  $\xi: C \to k$ a degree $-2$ linear map, such that
  $d^{2} = \ad{\xi}$ and $\xi d = 0.$ A dg-coalgebra is a curved
  dg-coalgebra with $h = 0$ (so $d^{2} = 0$).
\end{defn}

When $C = \Tco {\Pi A},$ we can calculate $\ad{\xi}$ using the
Gerstenhaber bracket and the trivial strictly unital \Ai-structure.

\begin{lem}\label{lem:ad-for-tensor-coalgebra}
  If $\xi \in \Hom {} {\Tco {\Pi A}} k$, then
  $\ad{\xi} = \Phi^{-1}([\msu, s \xi])$.
\end{lem}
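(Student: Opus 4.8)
The plan is to reduce everything to the first corestriction and then compute both sides. Recall from Definition \ref{defn:ad-and-curved-coalg} that $\ad{\xi}$ is a coderivation, and that $\Phi^{-1}([\msu, s\xi])$ is a coderivation by construction; both have degree $-2$, since $\xi$ has degree $-2$ and $|\msu| = |s\xi| = -1$. Because $\Phi = (\pi_1)_*$ of Lemma \ref{lem:isom_hhc_coder}.(1) is an isomorphism, two coderivations agree as soon as their $\pi_1$-components do. Since $\pi_1\Phi^{-1}([\msu, s\xi]) = [\msu, s\xi]$ by the definition of $\Phi$, it therefore suffices to prove
\[ \pi_1\,\ad{\xi} = [\msu, s\xi] \quad\text{in } \hhc A A_{-2}. \]
Here $s\xi \in \hhc A k$ is regarded as an element of $\hhc A A$ through the split inclusion $\Pi k \hookrightarrow \Pi A$, so that the Gerstenhaber bracket on the right makes sense.

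First I would compute the left-hand side directly from the comultiplication of $\Tco {\Pi A}$. On the summand $(\Pi A)^{\otimes n}$ we have $\Delta(v_1\otimes\cdots\otimes v_n) = \sum_{i=1}^{n-1}(v_1\otimes\cdots\otimes v_i)\otimes(v_{i+1}\otimes\cdots\otimes v_n)$, and after applying $\xi\otimes 1$ (resp.\ $1\otimes\xi$), the canonical isomorphism, and $\pi_1$, only the extreme cut $i = n-1$ (resp.\ $i = 1$) survives the projection onto $\Pi A$. Up to the signs produced by the isomorphisms of \ref{sect:notation}.(4), this yields the clean two-term expression
\[ \pi_1\,\ad{\xi}\,(v_1\otimes\cdots\otimes v_n) = \pm\,\xi^{n-1}(v_1\otimes\cdots\otimes v_{n-1})\,v_n \;\mp\; v_1\,\xi^{n-1}(v_2\otimes\cdots\otimes v_n). \]

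Next I would expand the bracket. Since $|\msu| = |s\xi| = -1$, we have $[\msu, s\xi] = \msu\circ s\xi + s\xi\circ\msu$. Because $\msu^n = 0$ for $n \neq 2$, both Gerstenhaber products collapse: $\msu\circ s\xi$ reduces to the two terms $\msu^2(s\xi\otimes 1)$ and $\msu^2(1\otimes s\xi)$, while $s\xi\circ\msu$ inserts the single operation $\msu^2$ into $s\xi$. I would then evaluate both on $[v_1|\cdots|v_n]$, using that $s\xi$ takes values in $\Pi k$ and that, by strict unitality, $\msu^2$ implements multiplication by the unit, so that $\msu^2([1]\otimes v) = v$ and $\msu^2(v\otimes[1]) = \pm v$. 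On inputs in which no $v_i$ is the unit, $s\xi\circ\msu$ vanishes, since $\msu^2$ is zero unless one of its entries is the unit, and $\msu\circ s\xi$ reproduces the two terms of $\pi_1\ad{\xi}$ verbatim; on inputs containing a unit strand, the extra contributions of $s\xi\circ\msu$ are precisely what is needed to correct the signs coming from $\msu\circ s\xi$, so that the total again equals $\pi_1\ad{\xi}$.

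The hard part will be entirely in the signs. One must track three separate sources simultaneously: the canonical isomorphisms $k\otimes C\cong C$ and $C\otimes k\cong C$ of \ref{sect:notation}.(4), the suspension $s$ relating $\xi$ and $s\xi$, and the Gerstenhaber and string-diagram conventions of Remark \ref{rem:signs_for_diags}. The delicate point is to verify that the cross terms of $\msu\circ s\xi$ and $s\xi\circ\msu$ reorganize correctly on unit inputs. This is best handled by evaluating on elements $[v_1|\cdots|v_n]$ and splitting according to which entries equal the unit, exactly the careful element-level verification flagged after Definition \ref{def-msu}.
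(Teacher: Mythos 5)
Your proposal is correct and follows the paper's own argument: both proofs reduce the identity to an equality of $\pi_1$-components via the coderivation isomorphism of Lemma \ref{lem:isom_hhc_coder}.(1), compute $\pi_1 \ad{\xi}$ as the two extreme cuts of the comultiplication (only $i=1$ and $i=n-1$ survive the projection), and match the result against $\msu^2(s\xi \otimes 1) + \msu^2(1 \otimes s\xi)$ using the definition of $\msu$ in \ref{def-msu} and the isomorphisms of \ref{sect:notation}.(4). The only divergence is that the paper evaluates on $(\Pi \o A)^{\otimes n}$ and quotes Lemma \ref{lem:diagrams-for-sual}.(2) for the bracket, in which setting the term $s\xi \circ \msu$ vanishes identically because $\msu^2$ restricted to $(\Pi \o A)^{\otimes 2}$ is zero; so your case analysis over inputs containing a unit strand, while not wrong, is unnecessary in the reduced setting where the lemma is actually applied (to the curved bar construction on $\Tco{\Pi \o A}$).
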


\begin{proof}
  Since $\ad{\xi}$ is a coderivation, it is equal to
  $\Phi^{-1} (\pi_{1} \ad{\xi}),$ using Lemma
  \ref{lem:isom_hhc_coder}.(1) (where $\Phi^{-1}$ is defined,
  also). Thus it is enough to show that
  $\pi_{1} \ad{\xi}|_{\Pi \overbar A^{\otimes n}} = [\msu, s
  \xi]|_{\Pi \overbar A^{\otimes n}}$ for all $n \geq 1.$ If
  $\xi = (\xi^{n}),$ then
  \begin{displaymath}
    \pi_{1} \ad{\xi}|_{\Pi \overbar A^{\otimes n}} = (\Pi \o A^{\otimes n}
    \xra{\xi^{n-1} \otimes 1} k \otimes \Pi \o A \cong \Pi \o A) - (\Pi \o A^{\otimes n}
    \xra{1 \otimes \xi^{n-1}} \Pi \o A \otimes k \cong \Pi \o A).
  \end{displaymath}
  By Lemma \ref{lem:diagrams-for-sual}.(2) we have
  $[\msu, s \xi]|_{\Pi \overbar A^{\otimes n}} = \msu^{2}(s \xi^{n-1}
  \otimes 1 + 1 \otimes s \xi^{n-1})$. Using the definition of $\msu$
  in \ref{def-msu} and the isomorphisms \ref{sect:notation}.(4), one
  checks these agree.
\end{proof}

\begin{cor}\label{cor:sual-ai-alg-equiv-to-curved-bar}
  Let $(A, 1)$ be a graded module with split element. A strictly unital element
  $\o \mu + h + \msu$ in $\hhc A A_{-1}$ is an \Ai-algebra structure
  if and only if the triple
  $(\Tco {\Pi \o A}, \Phi^{-1}(\o \mu), -s^{-1} h)$ is a curved
  dg-coalgebra ($\Phi^{-1}$ is defined in
  \ref{lem:isom_hhc_coder}). In diagrams, this is equivalent
  to:
  \begin{equation}
    \begin{gathered}
        \sum_{j} \begin{tikzpicture}[baseline={(current bounding
          box.center)}] \def\w{1cm} \def\gstart{.1} \def\relw{.4}
        \def\sepw{.1} \def\sepwidth{.1cm} \def\leftw{.3cm}
        \def\rightw{.3cm} \def\hgt{.5cm} \newBoxDiagram [box height =
        \hgt, output height = \hgt, width = \w, coord =
        {($(\sepwidth + \leftw + \w, 0)$)}, decoration yshift label =
        .2cm, decoration yshift label = .05cm, arm height = 3*\hgt,
        label = \o \mu]{f}; \print{f}; \f.coord at top(gcoor,
        \gstart); \newBoxDiagram[width = \w*\relw, box height = \hgt,
        arm height = \hgt, output height = \hgt, coord = {(gcoor)},
        output height = \hgt, label = \o \mu]{g}; \print{g};
        \printArmSep{g}; \def\smallsep{.08}
        \pgfmathsetmacro{\var}{\gstart - \relw - \sepw}
        \printArm{f}{\var}; \printDec[/decorate, decorate left = -1,
        decorate right = \var]{f}{j};
        \pgfmathsetmacro{\var}{-1+\smallsep}
        \pgfmathsetmacro{\vara}{\gstart - \relw - \sepw - \smallsep}
        \printArmSep[arm sep left = \var, arm sep right = \vara]{f}
        \pgfmathsetmacro{\var}{\gstart + \relw + \sepw}
        \printArm{f}{\var} \pgfmathsetmacro{\var}{1-\smallsep}
        \pgfmathsetmacro{\vara}{\gstart + \relw + \sepw + \smallsep}
        \printArmSep[arm sep left = \vara, arm sep right = \var]{f}
      \end{tikzpicture}       \hspace{.1cm} + \hspace{.1cm}
      \begin{tikzpicture}[baseline={(current bounding box.center)}]
        \def\w{1cm} \def\gstart{-.35} \def\relw{.6} \def\sepw{.1}
        \newBoxDiagram[width/.expand once = \w, arm height = 1.5cm,
        label = \msu^{2}]{f}; \printNoArms{f}; 
        \printDec[color = white]{f}{test}

\f.coord at
        top(gcoor, \gstart); \newBoxDiagram[width = \w*\relw, arm
        height = .5cm, coord = {(gcoor)}, output height = .5cm, label
        = h]{g}; \print[/unit output]{g}; \printArmSep{g};
        \printArm{f}{.8} \def\smallsep{.08}
        \pgfmathsetmacro{\var}{\gstart - \relw - \sepw}
        \pgfmathsetmacro{\var}{-1+\smallsep}
        \pgfmathsetmacro{\vara}{\gstart - \relw - \sepw - \smallsep}

        \pgfmathsetmacro{\var}{\gstart + \relw + \sepw}
      %
        \pgfmathsetmacro{\var}{1-\smallsep}
        \pgfmathsetmacro{\vara}{\gstart + \relw + \sepw + \smallsep}
      \end{tikzpicture}
 \hspace{.1cm} + \hspace{.1cm}
      \begin{tikzpicture}[baseline={(current bounding box.center)}]
        \def\w{1cm} \def\gstart{.35} \def\relw{.6} \def\sepw{.1}
        \newBoxDiagram[width/.expand once = \w, arm height = 1.5cm,
        label = \msu^{2}]{f};
        \printDec[color = white]{f}{test}
 \printNoArms{f}; \f.coord at
        top(gcoor, \gstart); \newBoxDiagram[width = \w*\relw, arm
        height = .5cm, coord = {(gcoor)}, output height = .5cm, label
        = h]{g}; \print[/unit output]{g}; \printArmSep{g};
        \printArm{f}{-.8} \def\smallsep{.08}
        \pgfmathsetmacro{\var}{\gstart - \relw - \sepw}
        \pgfmathsetmacro{\var}{-1+\smallsep}
        \pgfmathsetmacro{\vara}{\gstart - \relw - \sepw - \smallsep}

        \pgfmathsetmacro{\var}{\gstart + \relw + \sepw}
      %
        \pgfmathsetmacro{\var}{1-\smallsep}
        \pgfmathsetmacro{\vara}{\gstart + \relw + \sepw + \smallsep}
      \end{tikzpicture}
\quad = \quad 0\\
      \sum_{j} \begin{tikzpicture}[baseline={(current bounding
          box.center)}] \def\w{1.3cm} \def\gstart{-.1} \def\relw{.5}
        \def\sepw{.1} \newBoxDiagram[width/.expand once = \w, arm
        height = 1.5cm, label = h]{f}; \print[/unit output]{f};
        \f.coord at top(gcoor, \gstart); \newBoxDiagram[width =
        \w*\relw, arm height = .5cm, coord = {(gcoor)}, output height
        = .5cm, label = \o \mu]{g}; \print{g}; \printArmSep{g};

        \def\smallsep{.08}
        \pgfmathsetmacro{\var}{\gstart - \relw - \sepw}
        \printArm{f}{\var}; \printDec[/decorate, decorate left = -1,
        decorate right = \var]{f}{j};
        \pgfmathsetmacro{\var}{-1+\smallsep}
        \pgfmathsetmacro{\vara}{\gstart - \relw - \sepw - \smallsep}
        \printArmSep[arm sep left = \var, arm sep right = \vara]{f}

        \pgfmathsetmacro{\var}{\gstart + \relw + \sepw}
        \printArm{f}{\var}
        \pgfmathsetmacro{\var}{1-\smallsep}
        \pgfmathsetmacro{\vara}{\gstart + \relw + \sepw + \smallsep}
        \printArmSep[arm sep left = \vara, arm sep right = \var]{f}
      \end{tikzpicture} \quad = \quad 0.
    \end{gathered}\label{eqn:diagrams-for-sual-ainf-alg}
  \end{equation}
\end{cor}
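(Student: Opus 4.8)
The plan is to run everything through the Maurer--Cartan equation furnished by Theorem~\ref{thm:main-thm-sunal}, splitting it according to its target and then matching the two resulting pieces against the curved dg-coalgebra axioms of Definition~\ref{defn:ad-and-curved-coalg}. By Theorem~\ref{thm:main-thm-sunal}, the strictly unital element $\o \mu + h + \msu$ is an \Ai-algebra structure if and only if $\o \mu + h$ is a Maurer--Cartan element of $(\hhc {\o A} A, [\msu, -])$, i.e. $[\msu, \o \mu + h] + \tfrac12 [\o \mu + h, \o \mu + h] = 0$. Using Lemma~\ref{lem:diagrams-for-sual}.(1), which gives $[\o \mu, \msu] = 0$ and $[h,h] = 0$, together with graded antisymmetry (so that $[\o \mu, h] + [h, \o \mu] = 2[\o \mu, h]$, since $|\o \mu| = |h| = -1$), this Maurer--Cartan equation simplifies to
\[
\tfrac12 [\o \mu, \o \mu] + [\msu, h] + [\o \mu, h] = 0.
\]

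First I would split this identity along the direct sum decomposition $\hhc {\o A} A = \hhc {\o A} {\o A} \oplus \hhc {\o A} k$. By Lemma~\ref{lem:diagrams-for-sual}, the terms $\tfrac12 [\o \mu, \o \mu]$ and $[\msu, h]$ lie in $\hhc {\o A} {\o A}$ (parts~(3) and~(2)), while $[\o \mu, h]$ lies in $\hhc {\o A} k$ (part~(4)); hence the single equation is equivalent to the pair $\tfrac12 [\o \mu, \o \mu] + [\msu, h] = 0$ in $\hhc {\o A} {\o A}$ and $[\o \mu, h] = 0$ in $\hhc {\o A} k$. Substituting the explicit diagram expressions of Lemma~\ref{lem:diagrams-for-sual}.(2), (3), (4), writing $\tfrac12[\o \mu, \o \mu] = \o \mu \circ \o \mu$, and applying the summation convention of Remark~\ref{rem:summation-convention}, these two equations are exactly the two displays in~\eqref{eqn:diagrams-for-sual-ainf-alg}.

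It remains to recognise the same pair as the two axioms defining the curved dg-coalgebra $(\Tco {\Pi \o A}, \Phi^{-1}(\o \mu), -s^{-1}h)$. Since $\Phi^{-1}$ is a morphism of graded Lie algebras by Lemma~\ref{lem:isom_hhc_coder}.(1) and $\Phi^{-1}(\o \mu)$ has degree $-1$, we have $\Phi^{-1}(\o \mu)^2 = \tfrac12 \Phi^{-1}([\o \mu, \o \mu])$; and taking $\xi = -s^{-1}h$, so that $s\xi = -h$, Lemma~\ref{lem:ad-for-tensor-coalgebra} gives $\ad{\xi} = \Phi^{-1}([\msu, -h]) = -\Phi^{-1}([\msu, h])$. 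Applying the isomorphism $\Phi^{-1}$ to the first equation of the pair therefore yields precisely $\Phi^{-1}(\o \mu)^2 = \ad{\xi}$, the first curved dg-coalgebra axiom. For the second axiom, $\xi \Phi^{-1}(\o \mu) = 0$, I would observe that $\o \mu \circ h = 0$ because $\o \mu$ vanishes on every word containing a unit factor while $h$ outputs into the $\Pi k$ direction; hence $[\o \mu, h] = h \circ \o \mu = h\, \Phi^{-1}(\o \mu)$ by the identity $g \circ f = g \Phi^{-1}(f)$ recorded after Lemma~\ref{lem:isom_hhc_coder}, and so $[\o \mu, h] = 0$ is equivalent to $\xi \Phi^{-1}(\o \mu) = -s^{-1} h \Phi^{-1}(\o \mu) = 0$. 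The only genuine bookkeeping obstacle is tracking the suspension and sign in passing from $\xi = -s^{-1}h$ to $s\xi = -h$ when invoking Lemma~\ref{lem:ad-for-tensor-coalgebra}, and verifying that $\o \mu \circ h$ indeed vanishes; once these are settled the chain of equivalences is immediate.
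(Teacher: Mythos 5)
Your argument is correct and follows essentially the same route as the paper: reduce via Theorem \ref{thm:main-thm-sunal} to the Maurer--Cartan equation, split it along $\hhc {\o A} {\o A} \oplus \hhc {\o A} k$ using Lemma \ref{lem:diagrams-for-sual} to get the pair $\tfrac12[\o \mu, \o \mu] + [\msu, h] = 0$ and $[\o \mu, h] = 0$, and then match these with the two curved dg-coalgebra axioms via Lemmas \ref{lem:isom_hhc_coder} and \ref{lem:ad-for-tensor-coalgebra}. The only cosmetic difference is that you apply $\Phi^{-1}$ to the whole first equation and use $\Phi^{-1}(\o\mu)^2 = \tfrac12\Phi^{-1}([\o\mu,\o\mu])$ (which leans on $1/2 \in k$, a standing assumption the paper makes anyway), whereas the paper compares coderivations through $\pi_1$ and the identity $\pi_1 \o d^2 = \o\mu \circ \o\mu$; these are interchangeable.
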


\begin{proof}
  Let $\o \mu + h + \msu$ be a strictly unital element with
  $\o \mu + h \in \hhc {\o A} {\o A} \oplus \hhc {\o A} k = \hhc {\o
    A} A$. By Theorem \ref{thm:main-thm-sunal}, this is an \Ai-algebra
  structure if and only if it is a Maurer-Cartan element of
  $(\hhc {\o A} {A}, [\msu, -]).$ By Lemma
  \ref{lem:diagrams-for-sual}, this is equivalent to
$$[\msu, h] + \frac{1}{2}[\o \mu, \o
\mu] = 0 \quad \text{ and } \quad [\o \mu, h] = 0,$$ and these are
equivalent to the first and second equations of
\eqref{eqn:diagrams-for-sual-ainf-alg}, respectively.

Set $\o d = \Phi^{-1}(\o \mu)$. The triple
$(\Tco {\Pi \o A}, \o d, -s^{-1} h)$ is a curved dg-coalgebra if and
only if $\o d^2 + \ad{s^{-1}h} = 0$ and $s^{-1} h \o d = 0.$ We have
$h \o d = h \Phi^{-1}(\o \mu) = h \circ \o \mu,$ so $s^{-1} h \o d$ is zero
exactly when the second equation of
\eqref{eqn:diagrams-for-sual-ainf-alg} holds. Since $\o d^{2}$
and $\ad{s^{-1} h}$ are both coderivations of $\Tco {\Pi \o A}$, $\o
d^2 = -\ad{s^{-1}h}$ holds if and only if
$\pi_{1}\o d^{2} = -\pi_{1} \ad{s^{-1} h}$ holds, by
\ref{lem:isom_hhc_coder}.(1). We have
$\pi_{1}\o d^{2} = \o \mu \circ \o \mu$, and
$\pi_{1} \ad{s^{-1} h} = [\msu, h]$ by Lemma
\ref{lem:ad-for-tensor-coalgebra}. Thus $\o d^{2} + \ad{s^{-1}h} = 0$ holds if
and only if $\o \mu \circ \o \mu + [\msu, h] = 0$ holds, and this is exactly
the first equation of
\eqref{eqn:diagrams-for-sual-ainf-alg}.
\end{proof}

\begin{defn}
  If $(A, 1, \o \mu + h + \msu)$ is an \Ai-algebra with split unit,
  the \emph{curved bar construction},
  denoted $\Bar {\o A}$, is the curved dg-coalgebra
  $(\Tco {\Pi \o A}, \Phi^{-1}(\o \mu), -s^{-1} h)$.
\end{defn}

\begin{rems}
  Note that $\Bar {\o A}$ is a dg-coalgebra if and only if
  $h = 0$ if and only if $(A, 1, \mu)$ is augmented.
\end{rems}

\begin{ex}
 Let $(A,1)$ be a graded module with split element as in Example
 \ref{ex:reslns-aug}, and let $\nu = \o \mu + h + \msu$ be a strictly
 unital \Ai-algebra structure on $(A,1)$. Set $\o d = \Phi^{-1}(\o
 \mu) \in \Coder(\Tco{\Pi \o A}, \Tco{\Pi \o A}).$ By Example
 \ref{ex:reslns-aug}, $h^n = 0$ for $n \geq 2$, thus $h \circ \o \mu =
 h^1 \o \mu^1 = 0$ and $\msu^2 \circ h$ is concentrated in tensor
 degree two. It follows from Corollary
 \ref{cor:sual-ai-alg-equiv-to-curved-bar} that $\o
 d^2[a_1|\ldots|a_n] = 0$ for $n \neq 2$ and
\begin{displaymath}
  \o d^{2}[a_{1}|a_{2}] =
    \begin{cases}
      \phantom{-}0 & |a_{1}| \neq 1 \text{ and } |a_{2}| \neq 1\\
      \phantom{-}m^{1}(a_{1})a_{2} & |a_{1}| = 1 \text{ and } |a_{2}| \neq 1\\
 -m^{1}(a_{2})a_{1} & |a_{1}| \neq 1 \text{ and } |a_{2}| = 1\\
 \phantom{-}m^{1}(a_{1})a_{2} - m^{1}(a_{2})a_{1} & |a_{1}| = 1 \text{ and } |a_{2}| = 1.
\end{cases}
\end{displaymath}
The \Ai-algebra $(A, 1, \nu)$ is augmented exactly when $h^{1} = 0,$
which is equivalent to $(m^{1})_{1} = 0.$ Thus we see directly in this
case that $\o d^{2} = 0$ if and only if $(A, 1, \nu)$ is augmented.
 \end{ex}

The smallest nontrivial case of the above is the following.

\begin{ex}\label{ex:curved-bar-constr-of-koszul-cx}
  Let $(A, 1, \mu)$ be the Koszul complex on $f \in k,$  so $\mu^{n} = 0$ for $n \geq 3$, $\mu^{2} = \msu$ and
  $\mu^{1} = (k \cdot [e] \xra{f} k \cdot [1]) \in \hhc {\o A} k_{-1}$.
  Thus $\o \mu = 0$ and $h = \mu^{1}$, so $A$ is augmented if
  and only if $f = 0.$ We also have:
\begin{displaymath} \Tco {\Pi \o A}\hspace{.2cm} = \hspace{.2cm}
  \begin{tikzpicture}[font=\fontsize{7}{22.4}\selectfont]
    \node[matrix] (my matrix) at (0,0) {
      \node {\ldots}; & \node{0};  & \node{$k[e]^{\otimes n}$};  &
      \node{0};  & \node{\ldots};  & \node{0};  & \node{$k[e|e]$};  &
      \node{0};  & \node{$k[e]$};  & \node{0}; 0  & \node{0};\\
      \node {}; & \node{$2n+1$};  & \node{$2n$};  & \node{$2n-1$};  &
      \node{};  & \node{5};  & \node{4};  & \node{3};  & \node{2};  &
      \node{1};  & \node{0};  & \node{};\\};
  \end{tikzpicture}
\end{displaymath}
and $h_{1}([e]) = -f$ and $h_{n} = 0$ for $n \geq 2$. If we set $T =
[e] \in \Tco {\Pi \o A}_{2},$ then $\Tco {\o A} = k[T],$ the divided powers coalgebra on the
1-dimensional free module generated by $T$. The $k$-dual is the symmetric
algebra $k[T^{*}]$, with curvature $-f T^{*}
\in k[T^{*}]_{-2}$.
\end{ex}

We now show the curved bar construction is functorial.

\begin{defn}
  A \emph{morphism of curved dg-coalgebras, $(C, d_{C}, h_{C}) \to
  (D, d_{D}, h_{D}),$} is a pair $(\gamma, \alpha),$ with
  $\gamma: C \to D$ a graded coalgebra morphism and
  $\alpha : {C} \to k$ a degree $-1$ linear map, such that the
  following equations hold,
  \begin{align*}
    d_D \gamma = \gamma d_{C} + \gamma \ad{\alpha} &\in \Hom {} C D\\
    h_D \gamma - \alpha^2 = \alpha d_{C} + h_{C} &\in \Hom {} C k,
  \end{align*}
  where $\ad \alpha$ is defined in \ref{defn:ad-and-curved-coalg}, and
  $\alpha^2 = (C \xra{\Delta} C \otimes C \xra{\alpha \otimes \alpha}
  k \otimes k \cong k).$
\end{defn}

\begin{cor}\label{cor-strictly-unital-morphisms-curved-morphisms}
  Let $(A, 1, \o \mu_{A}+ h_{A} + \msu)$ and $(B, 1, \o \mu_B + h_{B}
  + \msu)$ be
  \Ai-algebras with split units. A strictly unital element
  $g + \gsu \in \hhc A B_0$, with
$$g = \o g + a \in \hhc {\o A} {\o B} \oplus \hhc {\o A} k = \hhc {\o
  A} B,$$ is a morphism of \Ai-algebras if and only if
$$(\Psi^{-1}(\o g), -s^{-1}a): \Bar {\o A} \to \Bar {\o B}$$ is a
morphism of the corresponding curved dg-coalgebras, where
$\Psi^{-1}$ is defined in \ref{lem:isom_hhc_coder}.(2). In diagrams this
is equivalent to:
\begin{equation}
  \label{eq:cor-for-strictly-unital-morphism}
\def\wida{.33cm} 
    \def\widb{.30cm}
 \def\widk{.33cm}
 \def\dotsw{.32cm}
 \def\sepw{.1cm}
\def\hgt{.52cm}
\def\wdth{.55cm}
\begin{gathered}
\begin{tikzpicture}
        \newBoxDiagram[width = \wida, box height = \hgt, arm height
        = \hgt, coord = {(0,0)}, output height = \hgt, label =
        {\o g}]{g1};
        \print[/bmod]{g1};
        \printArmSep{g1};
        \printDec[color = white]{g1}{test}
        \node at
        ($(\wida + \sepw + \dotsw, \hgt +
        \hgt/2)$) {\ldots};
 \newBoxDiagram[width = \widk, box height
        = \hgt, arm height = \hgt, output height = \hgt, coord =
        {($(\wida+\sepw+2*\dotsw+\widk,
          0)$)}, label = {\o g}]{gk};
 \print[/bmod]{gk}; 
\printArmSep{gk};
        \setlength{\var}{\wida/2+\sepw/2+\dotsw+\widk/2}
        \setlength{\vara}{-\hgt - \hgt}
 \newBoxDiagram[width = \var,
        box height = \hgt, arm height = \hgt, output height = \hgt,
        coord = {(\var, \vara)}, label = {\o \mu_{B}}]{mut}
        \printNoArms[/bmod]{mut}
      \end{tikzpicture}
\hspace{.2cm} - \hspace{.2cm}
 \sum_{j}\hspace{.2cm}
\begin{tikzpicture}
      \def\w{1cm} \def\gstart{-.1} \def\relw{.4} \def\sepw{.1}
      \newBoxDiagram[width/.expand once = \w, arm height = 3*\hgt, box
      height = \hgt, coord = {(0,0)},
      label = {\o g}]{f}; \print[/bmod]{f};

 \f.coord at top(gcoor, \gstart);
      \newBoxDiagram[width = \w*\relw, arm height = \hgt, coord =
      {(gcoor)}, output height = \hgt, label = {\o \mu_{A}}]{g};
      \print{g}; \printArmSep{g};
      \def\smallsep{.08}
      \pgfmathsetmacro{\var}{\gstart - \relw - \sepw}
      \printArm{f}{\var}; \printDec[/decorate, decorate left = -1,
      decorate right = \var]{f}{j};
      \pgfmathsetmacro{\var}{-1+\smallsep}
      \pgfmathsetmacro{\vara}{\gstart - \relw - \sepw - \smallsep}
      \printArmSep[arm sep left = \var, arm sep right = \vara]{f}
      \pgfmathsetmacro{\var}{\gstart + \relw + \sepw}
      \printArm{f}{\var} \pgfmathsetmacro{\var}{1-\smallsep}
      \pgfmathsetmacro{\vara}{\gstart + \relw + \sepw + \smallsep}
      \printArmSep[arm sep left = \vara, arm sep right = \var]{f}
    \end{tikzpicture}
\hspace{.2cm} +  \hspace{.2cm}
\begin{tikzpicture}[baseline={([yshift = .2cm]current bounding
  box.center)}]

\setlength{\lengg}{0pt}

      \newBoxDiagram[width = \wdth, box height = \hgt, arm height =
      \hgt, output height = \hgt, label =
      \msu^2, coord = {(0,
       0)}]{isom} \printNoArms[/bmod]{isom}
      \printArm[/unit]{isom}{1};

\newlength{\whighbox}
\setlength{\whighbox}{\wdth - \sepw - \sepw}
\isom.coord at top(hghcoor, -1)
\newBoxDiagram[width/.expand once = \whighbox, box height = \hgt, arm height =
      \hgt, output height = \hgt, label = {\o g}, coord = {(hghcoor)}]{gmap}
       \printNoArms[/bmod]{gmap}
\printArmSep{gmap};
\printArm{gmap}{-1};
\printArm{gmap}{1};

\setlength{\whighbox}{\wdth - \sepw - \sepw}
\isom.coord at top(hghcoor, 1)
\newBoxDiagram[width/.expand once = \whighbox, box height = \hgt, arm height =
      \hgt, output height = \hgt, label = {a}, coord = {(hghcoor)}]{amap}
       \printBox{amap}
\printArmSep{amap};
\printArm{amap}{-1};
\printArm{amap}{1};
    \end{tikzpicture} 
\hspace{.2cm} + \hspace{.2cm}
\begin{tikzpicture}[baseline={([yshift = .2cm]current bounding
  box.center)}]
\setlength{\lengg}{\hgt + \hgt}
      \newBoxDiagram[width = \wdth, box height = \hgt, arm height =
      \hgt, output height = \hgt, label =
      \msu^2, coord = {(0,
        0)}]{isom} \printNoArms[/bmod]{isom}
      \printArm[/unit]{isom}{-1};

\setlength{\whighbox}{\wdth - \sepw - \sepw}
\isom.coord at top(hghcoor, -1)
\newBoxDiagram[width/.expand once = \whighbox, box height = \hgt, arm height =
      \hgt, output height = \hgt, label = a, coord = {(hghcoor)}]{amap}
       \printBox{amap}
\printArmSep{amap};
\printArm{amap}{-1};
\printArm{amap}{1};

\setlength{\whighbox}{\wdth - \sepw - \sepw}
\isom.coord at top(hghcoor, 1)
\newBoxDiagram[width/.expand once = \whighbox, box height = \hgt, arm height =
      \hgt, output height = \hgt, label = {\o g}, coord = {(hghcoor)}]{gmap}
       \printNoArms[/bmod]{gmap};
\printArmSep{gmap};
\printArm{gmap}{-1};
\printArm{gmap}{1};
    \end{tikzpicture}  \hspace{.2cm} = \hspace{.2cm} 0\\
    %
    %
    %
    %
   \begin{tikzpicture}
        \newBoxDiagram[width = \wida, box height = \hgt, arm height
        = \hgt, coord = {(0,0)}, output height = \hgt, label =
        {\o g}]{g1};
        \print[/bmod]{g1};
        \printArmSep{g1};
        \printDec[color = white]{g1}{test}

        \node at
        ($(\wida + \sepw + \dotsw, \hgt +
        \hgt/2)$) {\ldots};
 \newBoxDiagram[width = \widk, box height
        = \hgt, arm height = \hgt, output height = \hgt, coord =
        {($(\wida+\sepw+2*\dotsw+\widk,
          0)$)}, label = {\o g}]{gk};
 \print[/bmod]{gk}; 
\printArmSep{gk};
        \setlength{\var}{\wida/2+\sepw/2+\dotsw+\widk/2}
        \setlength{\vara}{-\hgt - \hgt}
 \newBoxDiagram[width = \var,
        box height = \hgt, arm height = \hgt, output height = \hgt,
        coord = {(\var, \vara)}, label = {h_{B}}]{mut}
        \printNoArms[/unit output]{mut}
      \end{tikzpicture}
\hspace{.2cm} - \hspace{.2cm}
 \sum_{j}\hspace{.2cm}
\begin{tikzpicture}
      \def\w{1cm} \def\gstart{-.1} \def\relw{.4} \def\sepw{.1}
      \newBoxDiagram[width/.expand once = \w, arm height = 3*\hgt, box
      height = \hgt, coord = {(0,0)},
      label = {a}]{f}; \print[/unit output]{f};

 \f.coord at top(gcoor, \gstart);
      \newBoxDiagram[width = \w*\relw, arm height = \hgt, coord =
      {(gcoor)}, output height = \hgt, label = {\o \mu_{A}}]{g};
      \print{g}; \printArmSep{g};
      \def\smallsep{.08}
      \pgfmathsetmacro{\var}{\gstart - \relw - \sepw}
      \printArm{f}{\var}; \printDec[/decorate, decorate left = -1,
      decorate right = \var]{f}{j};
      \pgfmathsetmacro{\var}{-1+\smallsep}
      \pgfmathsetmacro{\vara}{\gstart - \relw - \sepw - \smallsep}
      \printArmSep[arm sep left = \var, arm sep right = \vara]{f}
      \pgfmathsetmacro{\var}{\gstart + \relw + \sepw}
      \printArm{f}{\var} \pgfmathsetmacro{\var}{1-\smallsep}
      \pgfmathsetmacro{\vara}{\gstart + \relw + \sepw + \smallsep}
      \printArmSep[arm sep left = \vara, arm sep right = \var]{f}
    \end{tikzpicture}
\hspace{.2cm} -  \hspace{.2cm}
\begin{tikzpicture}[baseline={([yshift = .2cm]current bounding
  box.center)}]
\setlength{\lengg}{\hgt + \hgt}
      \newBoxDiagram[width = \wdth, box height = \hgt, arm height =
      2*\hgt, output height = 2*\hgt, label =
      h_{A}, coord = {($(0,
        \lengg)$)}]{hmap} 
      \print[/unit output]{hmap}
      \printArmSep{hmap}
\end{tikzpicture}
\hspace{.2cm} + \hspace{.2cm}
\begin{tikzpicture}[baseline={([yshift = .2cm]current bounding
  box.center)}]

\setlength{\lengg}{\hgt + \hgt}
      \newBoxDiagram[width = \wdth, box height = \hgt, arm height =
      \hgt, output height = \hgt, label =
      \msu^2, coord = {($(0,
        \lengg)$)}]{isom} \printNoArms[/unit output]{isom} \printArm[/unit]{isom}{1}
      \printArm[/unit]{isom}{-1};

\setlength{\whighbox}{\wdth - \sepw - \sepw}
\isom.coord at top(hghcoor, -1)
\newBoxDiagram[width/.expand once = \whighbox, box height = \hgt, arm height =
      \hgt, output height = \hgt, label = a, coord = {(hghcoor)}]{amap}
       \printBox{amap}
\printArmSep{amap};
\printArm{amap}{-1};
\printArm{amap}{1};

\setlength{\whighbox}{\wdth - \sepw - \sepw}
\isom.coord at top(hghcoor, 1)
\newBoxDiagram[width/.expand once = \whighbox, box height = \hgt, arm height =
      \hgt, output height = \hgt, label = a, coord = {(hghcoor)}]{gmap}
       \printBox{gmap}
\printArmSep{gmap};
\printArm{gmap}{-1};
\printArm{gmap}{1};
    \end{tikzpicture} \hspace{.2cm} = \hspace{.2cm} 0.
    \end{gathered}
\end{equation}
\end{cor}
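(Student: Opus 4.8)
The plan is to mirror the proof of Corollary \ref{cor:sual-ai-alg-equiv-to-curved-bar}: I would show that both the \Ai-morphism condition and the curved-dg-coalgebra-morphism condition are equivalent to the two displayed equations \eqref{eq:cor-for-strictly-unital-morphism}, which then serve as the bridge between them. The underlying principle is that, because $\Tco {\Pi \o B}$ is cofree, a map built from a coalgebra morphism is pinned down by its corestriction $\pi_1$, just as in Lemma \ref{lem:isom_hhc_coder} and Corollary \ref{cor:morphisms-commuting-with-coderivations}.

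For the \Ai-morphism side: by Definition \ref{defn-A-infinity-algebra-morphism}.(2), $g + \gsu$ is an \Ai-morphism exactly when $\nu_B * (g+\gsu) = (g+\gsu)\circ\nu_A$. I would expand both sides using $g = \o g + a$, $\nu_A = \o \mu_A + h_A + \msu$, $\nu_B = \o \mu_B + h_B + \msu$, and project onto the two summands of $\hhc {\o A} B = \hhc {\o A}{\o B}\oplus \hhc {\o A}k$. The key simplifications, all provable by evaluating on elements as in Lemma \ref{lem:diagrams-for-sual}, are: $\gsu$ restricts to zero on $\Tco {\Pi \o A}$ (its linear term factors through the projection $A\onto k$, which kills $\o A$); $\msu^2$ vanishes on $\Pi \o A\otimes\Pi \o A$; and inserting a $k$-valued map into an $\o A$-slot of $\o g$ or $a$ gives zero. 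What survives in the $\hhc {\o A}{\o B}$-component is $\o \mu_B * \o g - \o g\circ\o \mu_A$ together with the two cross terms $\msu^2(\o g\otimes a)$ and $\msu^2(a\otimes\o g)$ coming from $\msu * (g+\gsu)$, which is the first equation of \eqref{eq:cor-for-strictly-unital-morphism}; the $\hhc {\o A}k$-component yields the second equation, with the term $\msu^2(a\otimes a)$ coming from the $\Pi k\otimes\Pi k$-part of $\msu^2$ and the curvature $h_A$ arising from the portion of $(g+\gsu)\circ\nu_A$ landing in the unit direction of $B$.

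For the curved-coalgebra side: set $\gamma = \Psi^{-1}(\o g)$, $\alpha = -s^{-1}a$, and $C = \Bar {\o A}$ with $(d_C,h_C) = (\Phi^{-1}(\o \mu_A),\, -s^{-1}h_A)$, and similarly for $D = \Bar {\o B}$. The first defining equation $d_D\gamma = \gamma d_C + \gamma\,\ad{\alpha}$ is an identity of $(\gamma,\gamma)$-coderivations into the cofree coalgebra $\Tco {\Pi \o B}$ (each term is such, since $\gamma$ is a coalgebra map and $d_C, d_D, \ad{\alpha}$ are coderivations), hence, by the universal property underlying Lemma \ref{lem:isom_hhc_coder}.(1), it holds if and only if its $\pi_1$-corestriction does. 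Using $\pi_1 d_D = \o \mu_B$, $\pi_1\gamma = \o g$, Definition \ref{rem:defining-start-and-gerst-produc}, the identity $\o g\circ f = \o g\,\Phi^{-1}(f)$, and Lemma \ref{lem:ad-for-tensor-coalgebra} (which gives $\ad{\alpha} = \Phi^{-1}([\msu, s\alpha])$), the three terms become $\o \mu_B * \o g$, $\o g\circ\o \mu_A$, and $\o g\circ[\msu, s\alpha]$, the last of which equals the two cross terms above. The second defining equation already lands in $k$ and is read off directly, with $\alpha^2$ matching $\msu^2(a\otimes a)$ and $h_C$ supplying $-h_A$. Thus both conditions reduce to \eqref{eq:cor-for-strictly-unital-morphism}. (Alternatively, one can first use Corollary \ref{cor:morphisms-commuting-with-coderivations} to recast the \Ai-morphism condition as commutation of $\Psi^{-1}(g+\gsu)$ with the full bar differentials on $\Tco {\Pi A}$ and $\Tco {\Pi B}$, but the component analysis relating the full constructions to the curved ones on $\o A, \o B$ is essentially the same work.)

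The main obstacle is the sign and unit bookkeeping in identifying the cross terms — in particular, verifying that the Gerstenhaber expression $\o g\circ[\msu, s\alpha]$ produced on the coalgebra side agrees with the two diagrams $\msu^2(\o g\otimes a)$, $\msu^2(a\otimes\o g)$ produced on the morphism side. The nontrivial point is that the "interior" insertions, where $a$ scales a non-extremal input of $\o g$, cancel in pairs precisely because $\msu^2$ carries opposite signs on $\Pi k\otimes\Pi \o A$ and $\Pi \o A\otimes\Pi k$ (see Definition \ref{def-msu}), leaving only the complementary-block terms. I would carry this out by evaluating on $[a_1|\ldots|a_n]$ exactly as in the proofs of Lemmas \ref{lem:diagrams-for-sual} and \ref{lem:ad-for-tensor-coalgebra}, and track the $-h_A$ contribution in the $\hhc {\o A}k$-component the same way.
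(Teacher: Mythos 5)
Your proposal takes essentially the same route as the paper's proof: both the \Ai-morphism condition and the curved-coalgebra-morphism condition are reduced to the pair of displayed equations \eqref{eq:cor-for-strictly-unital-morphism}, using corestriction along $\pi_1$ (Corollary \ref{cor:morphisms-commuting-with-coderivations} together with Lemma \ref{lem:isom_hhc_coder}) on the coalgebra side and Lemma \ref{lem:ad-for-tensor-coalgebra} to identify $\o g \, \ad{s^{-1}a}$ with the two $\msu^2$ cross terms. Your cancellation analysis of the interior insertions (the telescoping forced by the opposite signs of $\msu^2$ on $\Pi k \otimes \Pi \o A$ and $\Pi \o A \otimes \Pi k$) is exactly the computation the paper compresses into ``one calculates,'' and your matching of the remaining terms agrees with the paper's equation \eqref{eq:in-proof-for-ainf-morphism}.

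The one step you elide is the passage from the defining identity $\nu_B * (g + \gsu) = (g+\gsu) \circ \nu_A$, which lives in $\hhc A B$, to an identity in $\hhc {\o A} B$ that can then be split along $\hhc {\o A}{\o B} \oplus \hhc {\o A} k$. Before you may ``project onto the two summands'' you must verify that the equation holds automatically on bar words containing the unit $1$; this is precisely the claim in the paper's Lemma \ref{lem:strictly-unital-morphisms}, proved there by a six-case evaluation using strict unitality of both $g + \gsu$ and $\nu_B$. The verification is routine but not free --- it is where the hypothesis that $g + \gsu$ itself (and not merely $g$) is strictly unital actually enters --- so it should appear explicitly in a written-up version. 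With that step supplied, your plan coincides with the paper's argument.
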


We need the following lemma for the proof. For later use, we assume that $B$
has a strict, but not necessarily split, unit; e.g., $B = k/I$ for
some ideal $I$.
\begin{lem}\label{lem:strictly-unital-morphisms}
  Let $(A, 1, \o \mu_{A} + h_{A} + \msu)$ be an \Ai-algebra with split unit
  and $(B, \nu_B)$ an \Ai-algebra with strict unit $1 \in B_{0}$. A
  strictly unital element $g + \gsu$, with $g \in \hhc {\o A} B_{0}$
  is an \Ai-morphism if and only if $\nu_B * g$ and $g \circ \o \mu_A
  + \gsu \circ h_{A}$ are equal,
where $*$ is defined in \ref{rem:defining-start-and-gerst-produc} and
$\circ$ is the Gerstenhaber product.
\end{lem}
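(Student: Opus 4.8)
The plan is to view both sides of the defining equation of an \Ai-morphism as maps $\Tco{\Pi A} \to \Pi B$ and to reduce that equation to reduced inputs. Write $G = g + \gsu \in \hhc A B_{0}$ and $\nu_A = \o \mu_{A} + h_{A} + \msu$. By \ref{defn-A-infinity-algebra-morphism}.(2), $G$ is an \Ai-morphism exactly when $\nu_B * G = G \circ \nu_A$ in $\hhc A B_{-1}$. Since $A = \o A \oplus k\cdot 1$, an element of $\hhc A B$ is determined by its values on tensors $[a_1|\cdots|a_n]$ whose entries each lie in $\o A$ or equal $1$; I will analyse the reduced tensors (no entry equal to $1$) and the unit-containing tensors separately, showing that the former give precisely the asserted identity while the latter are automatically satisfied.

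First I would compute both sides on reduced inputs. For $\nu_B * G = \nu_B \Psi^{-1}(G)$, the explicit formula for $\Psi^{-1}$ in \ref{lem:isom_hhc_coder}.(2) shows that on $(\Pi \o A)^{\otimes n}$ each component $G^{i}$ agrees with $g^{i}$, because the unit contribution $\gsu$ enters only through $\gsu^{1}$, which vanishes on $\Pi \o A$. Hence $\Psi^{-1}(G)$ and $\Psi^{-1}(g)$ agree on reduced inputs and $(\nu_B * G)|_{\o A} = \nu_B * g$. For the right-hand side I expand $G \circ \nu_A = G \circ \o \mu_{A} + G \circ h_{A} + G \circ \msu$ and evaluate on reduced inputs: the term $G \circ \o \mu_{A}$ contributes $g \circ \o \mu_{A}$, since $\o \mu_{A}$ and the surrounding entries all lie in $\o A$; the term $G \circ \msu$ vanishes, because each of the three diagrams defining $\msu^{2}$ in \ref{def-msu} feeds the unit line, so $\msu^{2}$ is zero on $\Pi \o A \otimes \Pi \o A$; and in $G \circ h_{A}$ the output of $h_{A}$ lies on $\Pi k$, so strict unitality of $G$ annihilates every outer component $G^{i}$ with $i \geq 2$ and only $\gsu^{1} h_{A} = \gsu \circ h_{A}$ survives. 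Thus the reduced part of $\nu_B * G = G \circ \nu_A$ is exactly $\nu_B * g = g \circ \o \mu_{A} + \gsu \circ h_{A}$.

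It then remains to show that $\nu_B * G$ and $G \circ \nu_A$ agree on every unit-containing tensor, so that such inputs impose no extra condition. The mechanism is that a distinguished entry equal to $1$ is handled by strict unitality in exactly one surviving way. On the side $G \circ \nu_A$, an inner $\nu_A^{m}$ covering the unit contributes nothing unless $m = 2$ (using $\nu_A^{1}[1] = 0$ and that $\nu_A^{m}$ kills unit-containing tensors for $m \neq 2$), while a $\nu_A^{m}$ not covering the unit passes it into an outer $G^{i}$ with $i \geq 2$, which then vanishes; so only the absorptions $\nu_A^{2}[1|a] = a = \pm\, \nu_A^{2}[a|1]$ remain. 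On the side $\nu_B * G = \nu_B \Psi^{-1}(G)$, the block of $\Psi^{-1}(G)$ containing the unit survives only as the singleton $G^{1}[1] = [1_{B}]$, after which strict unitality of $\nu_B$ absorbs $1_{B}$ with a neighbour. I would verify that these two families of surviving terms coincide by evaluating on elements with the sign rules of \ref{rem:signs_for_diags}, exactly as in the verification that $\msu \circ \msu = 0$ in \ref{def-msu}.

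Granting this, the difference $\nu_B * G - G \circ \nu_A$ vanishes on every unit-containing tensor unconditionally, hence it is zero if and only if its reduced part is, i.e. if and only if $\nu_B * g = g \circ \o \mu_{A} + \gsu \circ h_{A}$; this is the desired equivalence. The main obstacle is the last step: the combinatorial matching and sign bookkeeping of the absorption terms on unit-containing inputs, where strict unitality of all three of $\nu_A$, $\nu_B$, and $g + \gsu$ is genuinely used. Everything else is a direct expansion through the formulas of \ref{lem:isom_hhc_coder} and the definitions of $\msu$ and $\gsu$ in \ref{def-msu}.
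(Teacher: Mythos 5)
Your proposal is correct and follows essentially the same route as the paper's proof: split $\Tco{\Pi A}$ into reduced and unit-containing tensors, observe that on reduced inputs $\gsu$, $\msu$, $g\circ h_A$, and $\gsu\circ\o\mu_A$ all drop out to leave exactly the asserted identity, and show the defining equation holds automatically on unit-containing tensors. The final step you defer ("I would verify these surviving terms coincide") is precisely the paper's six-case evaluation of both sides on $[a_1|\ldots|1|\ldots|a_n]$, so nothing essential is missing.
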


\begin{proof}
  By definition,
  $g + \gsu$ is an \Ai-morphism exactly when
  \begin{equation}\label{eqn-ainf-morphism}
    \nu_B * (g + \gsu) = (g + \gsu) \circ (\o \mu_{A} + h_{A} + \msu).
  \end{equation}
  We claim this equation always holds for elements of $\Tco{\Pi A}
  \setminus \Tco{\Pi \o A}.$ Assuming the claim, we now note that the
  above equation holds on elements of $\Tco {\Pi \o A}$ if and only if
  $\nu_{B}* g = (g + \gsu) \circ (\o \mu_{A} + h_{A}),$ since $\gsu$ and $\msu$ are
  zero on $\Tco {\Pi \o A}.$ Also, clearly $g \circ h_{A} = 0$ and
  $\gsu \circ \o \mu_{A} = 0.$ Thus \eqref{eqn-ainf-morphism} holds on
  $\Tco {\Pi \o A}$ if and only if $\nu_B * g = g \circ \o \mu_A
  + \gsu \circ h_{A}.$

We are left to prove the claim, i.e., that \eqref{eqn-ainf-morphism}
holds on any element of the form
$\bs a = [a_1|\ldots|a_{l-1}|1|a_{l+1}|\ldots|a_{n}].$  We first compute the
left side. Since $g + \gsu$
is strictly unital, we have
\begin{displaymath}
\nu_B * (g + \gsu)(\bs a) =
      \nu_{B}\left (\Psi^{-1}(g+ \gsu)[a_1|\ldots|a_{l-1}] \otimes [1]
      \otimes \Psi^{-1}(g+\gsu)[a_{l+1}|\ldots|a_{n}]\right ).
  \end{displaymath}
Using that $\nu_{B}$ is strictly unital, we have: if $l = 1 = n$, the result is $0$; if $l = 1 < 2 = n$ and
 $a_{2} = 1,$ or if $l = 2 = n$ and $a_{1} = 1,$ the
 result is $[1];$ if $l = 1 < n,$ the result is
 $g^{n-1}[a_2|\ldots|a_{n}];$ if $1 < l = n,$ the result is
  $(-1)^{|a_{1}| + \ldots + |a_{n-1}| + n -
    1}g^{n-1}[a_1|\ldots|a_{n-1}];$ all other cases are zero. One now
  checks the same six cases on the right side.
\end{proof}

\begin{proof}[Proof of \ref{cor-strictly-unital-morphisms-curved-morphisms}]
  By the previous lemma, $g + \gsu$ is an \Ai-morphism if and only if
  $(\o \mu_B +  h_{B} + \msu) * g = g \circ \o \mu_A + \gsu \circ h_{A}.$
  Substituting $g = \o g + a$, and using the equalities $\o \mu_{B}*g =
  \o \mu_{B} * \o g$ and $h_{B}*g = h_{B} * \o g,$ this is
  equivalent to:
  \begin{equation}\label{eq:in-proof-for-ainf-morphism}
    \o \mu_B * \o g + h_B * \o g + \msu*(\o g + a) - \o g \circ \o \mu_{A} - a
    \circ \o \mu_{A} - \gsu \circ h_{A} = 0.
  \end{equation}
  We can match each term of \eqref{eq:in-proof-for-ainf-morphism}
  with a diagram in \eqref{eq:cor-for-strictly-unital-morphism}: $\o
  \mu_{B} * \o g$ is the first diagram and $- \o g
  \circ \o \mu_{A}$ is the second diagram, both in the first line; $h_B * \o g, -a \circ \o \mu_{A}$ and $-\gsu \circ h_{A}$
  are the first, second, and third diagrams of the
  second line; $\msu*(\o g +
  a)$ is the sum of the third and fourth diagrams of the first line,
  and the fourth diagram on the second line. It follows that $g +
  \gsu$ is an \Ai-morphism if and only if the equations
  \eqref{eq:cor-for-strictly-unital-morphism} hold.

  We now claim the equations
  \eqref{eq:cor-for-strictly-unital-morphism} hold if and only if
  $(\Psi^{-1}(\o g), -s^{-1}a)$ is a morphism of curved dg-coalgebras,
  i.e.,
  \begin{align*}
    \Phi^{-1}(\o \mu_{B}) \Psi^{-1}(\o g) = \Psi^{-1}(\o g) \Phi^{-1}(\o
    \mu_{A}) - \psi^{-1}(\o g) \ad {s^{-1}a}\\
    -s^{-1} h_B \Psi^{-1}(\o g) - (-s^{-1}a)^2 = -s^{-1} a \Phi^{-1}(\o
    \mu_{B}) - s^{-1}h_{A}.
  \end{align*}
  Using \ref{cor:morphisms-commuting-with-coderivations} to reduce the
  first equation, and applying $-s$ to the second, we have
  \begin{align*}
    \o \mu_{B} \Psi^{-1}(\o g) = \o g \Phi^{-1}(\o
    \mu_{A}) - \o g \ad {s^{-1}a}\\
    h_B \Psi^{-1}(\o g) + s(s^{-1}a)^2 = a \Phi^{-1}(\o
    \mu_{B}) + h_{A}.
  \end{align*}
  Using \ref{lem:ad-for-tensor-coalgebra}, one calculates that
   $\o g \ad {s^{-1}a}$ is the third and fourth terms of the first equation,
   and one checks $s(s^{-1}a)^2$ is the last diagram in the second equation.
 The other terms are easily matched to their counterparts in the
  equations \eqref{eq:cor-for-strictly-unital-morphism}, 
  which completes the proof.
\end{proof}

\subsection{Strictly unital deformation theory}
We will use without comment that if $(A,1)$ is a graded $k$-module
with split element and $l$ is a $k$-algebra, then
$(A \otimes l, 1 \otimes 1)$ is a graded $l$-module with split
element, and $\o{A \otimes l} = \o A \otimes l$.

\begin{defn}
  A \emph{strictly unital
    $(l, \epsilon)$-deformation of an \Ai-algebra with split unit $(A,
    1, \mu)$}, where $(l, \epsilon)$ is an augmented algebra, is an
  $l$-linear \Ai-algebra with split unit of the form
  $(A\otimes l, 1 \otimes 1, \bs \mu)$, such that
  $(A \otimes l, \bs \mu)$ is a nonunital $(l, \epsilon)$-deformation
  of $(A, \mu)$. We denote the resulting functor
  \begin{displaymath}
    \begin{gathered}
      \suinfAidef {(A,1,\mu)}: \finAlgk \to \Set.
    \end{gathered}
  \end{displaymath}
  If $(A, 1, \mu)$ is an augmented \Ai-algebra, an
  \emph{augmented $(l, \epsilon)$ deformation} is a strictly unital
  deformation $(A \otimes l, 1 \otimes 1, \bs \mu)$ that is augmented;
  the corresponding functor is denoted $\auginfAidef {(A,1, \mu)}$.
\end{defn}

We denote by $\msul l \in \hhca l {A \otimes l} {A \otimes l}$ the
$l$-linear trivial strictly unital algebra structure (see
\ref{def-msu} for the definition). It follows from
\ref{lem:desc-of-strictly-unital} that strictly unital elements of
$\hhca l {A \otimes l} {A \otimes l}$ are of the form
$\bs {\mu} + \msul l$, where
$$ \bs {\mu} \in \hhca l {\o{A \otimes l}} {A \otimes l} =
\hhca l {\o A \otimes l} {A \otimes l} \cong \hhc {\o A} {A \otimes
  l}.$$ Using the decomposition
$\hhc {\o A} {A \otimes l} = \hhc {\o A} {A \otimes \o l} \oplus \hhc
{\o A} A,$ the strictly unital element $\bs \mu$ is a deformation of
an \Ai-algebra structure with split unit $(A, 1, \mu + \msu)$ if and
only if $\bs \mu = \t{\bs \mu} + \theta_{l}(\mu \otimes 1)$ for some
$\t{\bs \mu} \in \hhc {\o A} {A \otimes \o l}.$

Using a different decomposition, we can write
$$\bs \mu =
\o{\bs {\mu}} + \bs h \in \hhc {\o A} {\o A \otimes l} \oplus \hhc {\o
  A} l = \hhc {\o A} {A \otimes l}.$$ The element $\bs \mu + \msul l$
is augmented if and only if $\bs h = 0.$ If $(A, 1, \o \mu + \msu)$ is
an augmented \Ai-algebra (so $(\o A, \o \mu)$ is a nonunital
\Ai-algebra) it now follows easily that there is a natural equivalence
of functors,
\begin{align*}
  \infAidef {(\overbar A, \overbar \mu)} &\to \auginfAidef {(A, 1, \overbar \mu + \msu)}\\
  \o{\bs \mu} &\mapsto \o{\bs \mu} + \msul l.
\end{align*}
Thus by \ref{prop:MC-equiv-to-defs}, the dg-Lie algebra
$(\hhc {\o A} {\o A}, [\o \mu, -])$ controls the infinitesimal
augmented deformations of the augmented \Ai-algebra
$(A, 1, \overbar \mu + \msu)$.

\begin{defn}
  The \emph{reduced Hochschild cochains} of an \Ai-algebra
  with split unit $(A, 1, \mu + \msu)$ is the dg-Lie algebra
  $(\hhc {\o A} A, [\mu + \msu, -])$ (it follows from
  \ref{lem:diagrams-for-sual} that $[\mu + \msu, -]$ preserves the
  subalgebra $\hhc {\o A} A$ of $\hhc A A$).
\end{defn}

\begin{cor}\label{cor:mc-repn-of-sual-and-aug-deforms}
  Let $(A,1, \mu + \msu)$ be \Ai-algebra with split unit. The reduced Hochschild cochains control the infinitesimal strictly
  unital deformation functor via the natural transformation
  \begin{displaymath}
    \begin{array}{ccc}
      \MCFunct {(\hhc {\overbar A} {A}, [\mu + \msu, -])} &\to& \suinfAidef{(A,1,\mu+\msu)}\\
      \t \mu & \mapsto & \theta_{l}(\t \mu +\mu \otimes 1 + \msu \otimes 1)
                         = \bs \mu + \msul l.
    \end{array}
  \end{displaymath}
\end{cor}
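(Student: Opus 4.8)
The plan is to follow the template of the proof of Proposition \ref{prop:MC-equiv-to-defs}, transporting everything across the Lie-algebra isomorphism $\theta_{l}$ while carrying the trivial term $\msu$ along. I would fix an object $(l, \epsilon)$ of $\finAlgk$ and set $\o l = \ker \epsilon$; since the augmentation splits, $l = k \oplus \o l$ as $k$-modules and $\o l$ is again finitely generated projective, so $\theta_{l}\colon \hhc {\o A} A \otimes l \xra{\cong} \hhc {\o A} {A \otimes l}$ is an isomorphism of graded Lie algebras (the restriction of the $\theta_{l}$ of Proposition \ref{prop:MC-equiv-to-defs} to maps vanishing on unit inputs). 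First I would unwind both sides. By definition the source is $\MC(\hhc {\o A} A \otimes \o l, [(\mu + \msu)\otimes 1, -])$. For the target, Theorem \ref{thm:main-thm-sunal} applied to $(A \otimes l, 1 \otimes 1)$ identifies the $l$-linear strictly unital \Ai-structures on $A \otimes l$ with the Maurer-Cartan elements of $(\hhc {\o A} {A \otimes l}, [\msul l, -])$, and the deformation condition singles out those reducing to $\mu$ under $1 \otimes \epsilon$; thus $\suinfAidef {(A,1,\mu+\msu)}(l,\epsilon)$ is identified with $\{\bs \mu \in \MC(\hhc {\o A}{A \otimes l}, [\msul l, -]) : (1 \otimes \epsilon)_*\bs \mu = \mu\}$.

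The heart of the argument is a shift bijection inside $\hhc {\o A} A \otimes l$, equipped with the Gerstenhaber bracket and zero differential. I would send $\t \mu \in \hhc {\o A} A \otimes \o l$ to $\mu' := \mu \otimes 1 + \t \mu$, the general element of $\hhc {\o A} A \otimes l$ reducing to $\mu$. Expanding the Maurer-Cartan expression of $\mu'$ in $(\hhc {\o A} A \otimes l, [\msu \otimes 1, -])$ and using that $\mu$ is itself a Maurer-Cartan element of $(\hhc {\o A} A, [\msu, -])$ — equivalently $[\mu + \msu, \mu + \msu] = 0$, which holds by Theorem \ref{thm:main-thm-sunal} since $\mu + \msu$ is an \Ai-structure — the constant term cancels and one is left with
\begin{displaymath}
[(\mu + \msu)\otimes 1, \t \mu] + \tfrac{1}{2}[\t \mu, \t \mu] = 0,
\end{displaymath}
the Maurer-Cartan equation defining the source. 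Here I must check that the brackets stay inside $\hhc {\o A} A \otimes \o l$: this uses that $[\mu + \msu, -]$ preserves $\hhc {\o A} A$ (so the reduced Hochschild cochains are defined) and that $\o l$ is an ideal of $l$.

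Applying the Lie isomorphism $\theta_{l}$ then transports this bijection to the target. The point is that $\theta_{l}(\msu \otimes 1) = \msul l$, so $\theta_{l}$ carries the strictly unital structure $\mu' + \msu \otimes 1$ to $\bs \mu + \msul l$ with $\bs \mu = \theta_{l}(\mu')$; since $\theta_{l}$ preserves Maurer-Cartan elements and commutes with the reduction $1 \otimes \epsilon$, the image is precisely a strictly unital $(l,\epsilon)$-deformation, and conversely every such deformation arises this way via $\theta_{l}^{-1}$. Composing the two steps yields the stated assignment $\t \mu \mapsto \theta_{l}(\t \mu + \mu \otimes 1 + \msu \otimes 1) = \bs \mu + \msul l$. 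Naturality in $(l,\epsilon)$ is then immediate, as $\theta_{l}$, the shift by $\mu \otimes 1 + \msu \otimes 1$, and base change all commute with augmentation-preserving algebra maps $l \to l'$.

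I expect the main obstacle to be the bookkeeping around the non-augmented term $\msu$. Unlike in Proposition \ref{prop:MC-equiv-to-defs}, the base point $\mu + \msu$ is not an element of the deformation Lie algebra $\hhc {\o A} A$ (the summand $\msu$ lives in the full $\hhc A A$), so I must verify carefully that the identity $\theta_{l}(\msu \otimes 1) = \msul l$ holds on the nose and that every bracket computation respects the subalgebra $\hhc {\o A} A$, invoking Theorem \ref{thm:main-thm-sunal} once over $k$ and once over $l$. Once these compatibilities are in place, the cancellation of the constant Maurer-Cartan term and the transport across $\theta_{l}$ are formal.
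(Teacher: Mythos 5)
Your proposal is correct and follows essentially the same route as the paper's proof: the shift bijection $\t\mu \mapsto \t\mu + \mu\otimes 1$ onto $\MC(1\otimes\epsilon)^{-1}(\mu)$ inside $(\hhc{\o A}{A}\otimes l, [\msu\otimes 1,-])$, transport across the Lie isomorphism $\theta_l$ using $\theta_l(\msu\otimes 1)=\msul{l}$, and identification of the target with strictly unital structures via Theorem \ref{thm:main-thm-sunal}. You merely make explicit the Maurer--Cartan expansion and the subalgebra/ideal checks that the paper leaves to the reader.
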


  \begin{proof}
    Let $(l, \epsilon)$ be an object of $\finAlgk$ and set
    $\o l = \ker \epsilon.$ By definition,
$$\MCFunct {(\hhc {\overbar A} A, [\mu + \msu, -])}(l, \epsilon)
= \MC( \hhc {\overbar A} A \otimes \o l, [\mu \otimes 1 + \msu \otimes
1, -]),$$ and the following is seen to be a bijection,\\
\scalebox{.85}{\parbox{.5\linewidth}{
  \begin{align*}
  \MC( \hhc {\overbar A} A \otimes \o l, [\mu \otimes 1 + \msu \otimes
  1, -]) &\xra{\cong} \MC(1
           \otimes \epsilon)^{-1}(\mu) \subseteq \MC(\hhc {\overbar A}
           A \otimes l, [\msu \otimes 1, -])\\
  \t \mu &\mapsto \t \mu + \mu \otimes 1.
\end{align*}}}\\
One checks that $\theta_{l}(\msu \otimes 1) = \msul l,$ and thus
$\theta_{l}$ is a morphism of dg-Lie algebras $(\hhc {\overbar A}
  A \otimes l, [\msu \otimes 1, -]) \to (\hhca l {\overbar{A \otimes
      l}} {A \otimes l},[\msul l, -])$. Since $l$ is a finitely
generated projective $k$-module, $\theta_{l}$ is an isomorphism,
and thus induces a bijection between MC elements.
The target is the set of \Ai-algebra structures on
$A \otimes l$ such that $1 \otimes 1$ is a split unit by Theorem
\ref{thm:main-thm-sunal}. Finally, one checks the bijection restricts
to a bijection
$\MC(1 \otimes \epsilon)^{-1}(\mu) \xra{\cong} \Aifam
A(\epsilon)^{-1}(\mu) = \infAidef {(A,\mu)}(l).$
\end{proof}

\begin{rems}
The reduced Hochschild cochains are quasi-isomorphic to the standard
Hochschild complex, see \cite[Theorem 4.4]{MR1989615}, but not as
dg-Lie algebras. Indeed, the functors they control, infinitesimal
strictly unital deformations and infinitesimal nonunital deformations,
are different.
\end{rems}

\section{Representations of \Ai-algebras}
In this section we treat strictly unital
\Ai-modules. In particular, we give a proof of
Positselski's result that strictly unital modules over a strictly
unital \Ai-algebra correspond to cofree curved dg-comodules over the
curved bar construction.

\subsection{Representations of nonunital \Ai-algebras}
If $(M, \delta_{M})$ is a complex of modules, $\Hom {} M M$ is
  a dg algebra with multiplication equal to composition and
  differential $\dhom = [\delta_{M}, -].$ We denote by $(\End M,
  \muend)$ the corresponding
  \Ai-algebra, see \ref{ex:dg-alg-is-ainf-alg}. 
\begin{defn}
  A \emph{representation of a nonunital \Ai-algebra $(A, \mu)$ on
 a complex $(M, \delta_{M})$} is an \Ai-morphism
  $p = (p^{n}) \in \hhc {A} {\End M}_{0}$ from $(A, \mu)$ to $(\End M,
  \muend).$
\end{defn}

\begin{defn}
  Let $M, N$ be graded modules. The \emph{adjoint} of an element
  $p^{n}$ in $\hhcn {A} {\Hom {} M N}_{k}$ is $\lambda^{n+1} =
  \ev(s^{-1}p^{n} \otimes 1): (\Pi A)^{\otimes n} \otimes M \to N$, where $\ev(f \otimes m ) = f(m)$. Thus
  $\lambda^{n+1}$ is the image of $p^{n}$ under the following isomorphisms:
\begin{align*}
\hhcn A {\Hom {} M N}_{k} & \cong \Pi \Hom {} {(\Pi A)^{\otimes n}} {\Hom
                         {} M N}_{k}\\
  &\cong \Pi \Hom {} {(\Pi A)^{\otimes n} \otimes M} N_{k} = \Hom {} {(\Pi A)^{\otimes n} \otimes M} N_{k-1}, 
\end{align*}
where the first isomorphism is from \ref{sect:notation}.(4).
In string diagrams, $\tikz[baseline={([yshift = -.1cm]current bounding
  box.center)}]{\draw[thick] (0,0) -- (0,.5)}$ denotes $\Pi A$, $\tikz[baseline={([yshift = -.1cm]current bounding
    box.center)}]{\draw[module] (0,0) -- (0,.5)}$ denotes $M$, $\tikz[baseline={([yshift = -.1cm]current bounding
  box.center)}]{\draw[secmodule] (0,0) -- (0,.5)}$ denotes $N$, and $\tikz[baseline={([yshift = -.1cm]current bounding
  box.center)}]{\draw[hom output] (0,0) -- (0,.5)}$ represents
$\Pi \Hom {} M N.$ We then have:
\begin{displaymath}
\def\wida{.33cm} 
    \def\widb{.30cm}
 \def\widk{.33cm}
 \def\dotsw{.32cm}
 \def\sepw{.1cm}
\def\sep{.1}
\def\hgt{.52cm}
 \def\wdth{1cm}
      \def\modsep{.35}
\begin{tikzpicture}
\setlength{\lengg}{\hgt + \hgt}
      \newBoxDiagram[width = \wdth, box height = \hgt, arm height =
      3*\hgt, output height = 3*\hgt, label =
      \lambda^{n+1}, coord = {($(0,
        \lengg)$)}]{g} 
      \print[/modulemap]{g}

      \pgfmathsetmacro{\rightsolidarm}{1 - \modsep}
      \printArm{hmap}{\rightsolidarm} 
\pgfmathsetmacro{\vara}{-1 +
        2*\sep} 
\pgfmathsetmacro{\varb}{\rightsolidarm - 2*\sep}
      \printArmSep[arm sep left = \vara, arm sep right =
      \varb]{g}
      \printArm{g}{\rightsolidarm}
      \printDec[/decorate, decorate right = \rightsolidarm]{g}{n}
\end{tikzpicture}
\hspace{.3cm} = \hspace{.3cm}
\begin{tikzpicture}

\setlength{\lengg}{0pt}
\newBoxDiagram[width =
      \wdth, box height = \hgt, arm height = 5*\hgt, output height =
      \hgt, label = \ev, coord = {($(0,
        \lengg)$)}]{eval}
 \printNoArms[/secmodule]{eval}
\printRightArm[/module]{eval}{1}

\eval.coord at top(coor, -.6)
      \newBoxDiagram[width = \wida, box height = \hgt, arm height =
      \hgt, output height = \hgt,output label = \Pi^{-1}, label =
      {s^{-1}}, coord = {(coor)}]{s}
 \printNoArms[/hom output]{s} 

\setlength{\whighbox}{\wdth - \sepw - \sepw}
\s.coord at top(hghcoor, 0)
\newBoxDiagram[width/.expand once = .82*\wdth, box height = \hgt, arm height =
      \hgt, output height = \hgt, label = {p^{n}}, coord = {(hghcoor)}]{p}
       \print[/hom output]{p};
       \printArmSep{p}
       \printPeriod{eval}
       \printDec[/decorate]{p}{n}
    \end{tikzpicture}
\end{displaymath}
\end{defn}

\begin{lemma}\label{lem:represen-iff-diagrams}
An element $p = (p^{n}) \in \hhc {A} {\End M}_{0}$ is a representation
of $(A, \mu = (\mu^{n}))$ on $(M,
\delta_{M})$ if and
only if the adjoint family $(\lambda^{n+1}),$ 
with $\lambda^{1} = \delta_{M}$, satisfies:
\begin{displaymath} \sum_{i = 2}^{n+1} \sum_{j=0}^{i-2} \, \,
\begin{tikzpicture}[baseline={(current bounding
            box.center)}]
          \def\w{1.4cm};
          \def\gstart{-.2}; \def\relw{.45}
          \def\sepw{.1} \def\sepwidth{.1cm} \def\leftw{.3cm}
          \def\rightw{.3cm} \def\hgt{.5cm}
          \newBoxDiagram [box height =
          \hgt, output height = \hgt, width = \w, coord =
          {($(\sepwidth + \leftw + \w,
            0)$)}, decoration yshift label = .2cm, decoration yshift label
          = .05cm, arm height = 3*\hgt, label = \lambda^{i}]{f};
          \print[/module]{f};
      
          \f.coord at top(gcoor, \gstart); \newBoxDiagram[width =
          \w*\relw, box height = \hgt, arm height = \hgt, output
          height = \hgt, coord = {(gcoor)}, output height = \hgt,
          label = \mu^{n-i+2}]{g}; \print{g}; \printArmSep{g};
          \def\smallsep{.08}\def\modsep{.35}
          \pgfmathsetmacro{\var}{\gstart - \relw - \sepw}
          \printArm{f}{\var}; \printDec[/decorate, decorate left = -1,
          decorate right = \var]{f}{j};
          \pgfmathsetmacro{\var}{-1+\smallsep}
          \pgfmathsetmacro{\vara}{\gstart - \relw - \sepw - \smallsep}
          \printArmSep[arm sep left = \var, arm sep right = \vara]{f}
          \pgfmathsetmacro{\var}{\gstart + \relw + \sepw}
          \printArm{f}{\var}
          \pgfmathsetmacro{\vara}{1-\modsep+\smallsep}
          \printDec[/decorate, decorate left = \var, decorate right = \vara]{f}{i
          - j - 2}
\pgfmathsetmacro{\var}{1-\modsep}
          \pgfmathsetmacro{\vara}{\gstart + \relw + \sepw + \smallsep}
          \pgfmathsetmacro{\varb}{\var+\smallsep} \printArmSep[arm sep
          left = \vara, arm sep right = \var]{f} \printArm{f}{\varb}
        \end{tikzpicture}
\hspace{.2cm} + \hspace{.2cm}
\sum_{i = 1}^{n+1}\, \, \begin{tikzpicture}[baseline={(current bounding
            box.center)}] \def\w{1.3cm}; \def\gstart{.5};
          \def\relw{.52} \def\sepw{.1} \def\sepwidth{.1cm}
          \def\leftw{.3cm} \def\rightw{.3cm} \def\hgt{.5cm}
          \newBoxDiagram [box height = \hgt, output height = \hgt,
          width = \w, coord = {($(\sepwidth + \leftw + \w,
            0)$)}, decoration yshift label = .2cm, decoration yshift
          label = .05cm, arm height = 3*\hgt, label = {\lambda^{i}}]{f};
          \printNoArms[/module]{f}; \printArm{f}{-1};
      
          \def\smallsep{.08}\def\modsep{.35} \f.coord at top(gcoor,
          \gstart); 
\newBoxDiagram[width = \w*\relw, box height =
          \hgt, arm height = \hgt, output height = \hgt, coord =
          {(gcoor)}, output height = \hgt, decoration yshift
          label = .05cm, label = {\lambda^{n-i+2}}]{g};
          \print[/module]{g}; \pgfmathsetmacro{\var}{1-\modsep}
          \printArm{g}{\var} 
\printDec[/decorate, decorate right = \var]{g}{n-i+1}
\printArmSep[arm sep right = .2]{g};
          \pgfmathsetmacro{\var}{\gstart - \relw - \sepw}
          \printArm{f}{\var};
          \printDec[/decorate, decorate right = \var]{f}{i-1}

\pgfmathsetmacro{\var}{-1+2*\smallsep}
          \pgfmathsetmacro{\vara}{\gstart - \relw - \sepw -
            2*\smallsep} \printArmSep[arm sep left = \var, arm sep
          right = \vara]{f}
        \end{tikzpicture} = 0.
\end{displaymath}
\end{lemma}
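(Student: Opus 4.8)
The plan is to unwind the definition: by Definition~\ref{defn-A-infinity-algebra-morphism}.(2), $p$ is a representation exactly when it is an \Ai-morphism $(A, \mu) \to (\End M, \muend)$, i.e.\ $\muend * p = p \circ \mu$ in $\hhc A {\End M}_{-1}$, with $*$ and $\circ$ as in \ref{rem:defining-start-and-gerst-produc}. The entire content of the lemma is to rewrite this single identity in terms of the adjoint family $(\lambda^{n+1})$. Since the adjunction $\hhcn A {\Hom {} M M}_{k} \xra{\cong} \Hom {} {(\Pi A)^{\otimes n} \otimes M} M_{k-1}$ displayed just before the lemma is an isomorphism, applying it to both sides of $\muend * p = p \circ \mu$ yields an equivalent equation, and I would check that this equivalent equation is precisely the displayed one.

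First I would record the shape of $\muend$. Since $(\End M, \muend)$ is the \Ai-algebra associated to the dg-algebra $(\Hom {} M M, \dhom, m^{2})$ with $m^{2}$ the composition, Remark~\ref{ex:dg-alg-is-ainf-alg} gives $\muend^{k} = 0$ for $k \geq 3$, together with $\muend^{1} = s\, m^{1} s^{-1}$, $m^{1} = \dhom$, and $\muend^{2} = -s\, m^{2} (s^{-1})^{\otimes 2}$. Hence, by the formula for $*$ recorded in \ref{rem:summation-convention}, the $n$th tensor component of the left side collapses to $(\muend * p)^{n} = \muend^{1} p^{n} + \sum_{i_1 + i_2 = n} \muend^{2}(p^{i_1} \otimes p^{i_2})$, while the Gerstenhaber product gives $(p \circ \mu)^{n} = \sum_{i=1}^{n} \sum_{j=0}^{i-1} p^{i}(1^{\otimes j} \otimes \mu^{n-i+1} \otimes 1^{\otimes i-j-1})$.

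Next I would take the adjoint of each term, precomposing with $s^{-1}(-) \otimes 1_{M}$ and postcomposing with $\ev$, and match. Three identifications carry the argument. First, $(p \circ \mu)^{n}$ adjoints, after the reindexing $i \mapsto i+1$, to the first displayed sum, with $\mu^{n-i+2}$ inserted into one of the $i-1$ algebra slots of $\lambda^{i}$. Second, each composition term $\muend^{2}(p^{i_1} \otimes p^{i_2})$ adjoints to the nested map $\lambda^{i_1+1}(1^{\otimes i_1} \otimes \lambda^{i_2+1})$, because $s^{-1}\muend^{2}(- \otimes -)$ is, up to a Koszul sign, the composite of the endomorphisms $s^{-1}p^{i_1}$ and $s^{-1}p^{i_2}$, and $\ev$ turns this composite into the nested evaluation; these supply exactly the terms $2 \leq i \leq n$ of the second displayed sum. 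Third, the differential term $\muend^{1} p^{n} = s\, \dhom(s^{-1}p^{n})$ adjoints to $\delta_{M} \circ \lambda^{n+1}$ minus a signed copy of $\lambda^{n+1}(1^{\otimes n} \otimes \delta_{M})$, which are exactly the $i = 1$ and $i = n+1$ terms of the second sum once we set $\lambda^{1} = \delta_{M}$. Assembling these, and using that the adjunction is an isomorphism so each step is reversible, gives the claimed equivalence.

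The step I expect to be the main obstacle is the sign bookkeeping. The desuspensions inside $\ev(s^{-1}(-) \otimes 1_{M})$, the factor $(s^{-1})^{\otimes 2}$ in $\muend^{2}$ (which by \ref{sect:notation}.(3) contributes a Koszul sign depending on $|p^{i_1}|$), and the graded commutator defining $\dhom$ must all be tracked so that the signs on the nested-$\lambda$ and $\delta_{M}$ terms come out uniformly with those in the displayed equation. I would manage this using the string-diagram conventions of \ref{rem:signs_for_diags}: bending the $\End M$ output line around to act on the module line $M$ realizes the adjunction and produces exactly these signs.
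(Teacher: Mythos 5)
Your proposal is correct and follows exactly the route of the paper's own (much terser) proof: rewrite the representation condition as $p \circ \mu - \muend * p = 0$, pass to each tensor degree, and apply the adjunction isomorphism $\hhcn A {\End M}_{0} \cong \Hom {} {(\Pi A)^{\otimes n} \otimes M}{M}_{-1}$, with $p \circ \mu$ producing the first diagram and $-\muend * p$ the second. Your explicit decomposition $(\muend * p)^n = \muend^1 p^n + \sum_{i_1+i_2=n}\muend^2(p^{i_1}\otimes p^{i_2})$, and the identification of the $i=1$ and $i=n+1$ terms with the two halves of $\muend^1 p^n = \dhom$ applied to $s^{-1}p^n$, supplies precisely the details the paper compresses into ``one checks.''
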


\begin{proof}
By the definition of \Ai-morphism, $p$ is a
representation if and only if $p \circ \mu_{A} - \muend * p = 0.$ This
equation holds if and only if it holds in every tensor degree. Applying the isomorphism
$\hhcn A {\End M}_{0} \cong \Hom {} {(\Pi A)^{\otimes n} \otimes M} M_{-1}$,
one checks the equation in tensor degree $n$ is equivalent to the diagrams above, with $p \circ
\mu_{A}$ corresponding to the left diagram and $-\muend * p$ to the
right diagram.
\end{proof}

To define a morphism of representations, we need to add a counit
to the tensor coalgebra (else we would have to fix a morphism of
complexes, and talk about morphisms of
representations over that fixed morphism of complexes). Set
\begin{align*}
\Tcou {\Pi A} &= k \times \Tco {\Pi A} = \bigoplus_{n \geq 0} (\Pi
  A)^{\otimes n}\\
  \hhcu A B &= \Hom {} {\Tcou {\Pi A}} {\Pi B} \cong \hhc A B \oplus\Pi
B.
\end{align*}
Using this isomorphism, given a representation $p$ on a complex $(M, \delta_{M})$, we set
$p_{M} = p + \delta_{M} \in \hhcu A {\End M}_{0}$. Conversely, we can view
 representations as elements $p_{M} \in \hhcu A {\End M}_{0}$
such that $p^{0}_{M} \in \End M$ is a differential and $p^{\geq 1}_{M}$ is an
\Ai-morphism from $A$ to the endomorphism \Ai-algebra of the complex
$(M, p^{0}_{M}).$ 

\begin{defn}
Let $M, N, P$ be graded modules. We consider the action
\begin{gather*}
  \star:  \hhcu A {\Hom {} N P}_{k} \otimes \hhcu A {\Hom {} M N}_{l} \to \hhcu A {\Hom
  {} M P}_{k + l -1 }\\
  \alpha \otimes \beta = (\alpha^n) \otimes (\beta^n) \mapsto \left (\gamma  \sum_{j =
  0}^{n} \alpha^j\otimes \beta^{n-j}\right) = \alpha \star \beta,
\end{gather*}
where $\gamma = s c (s^{-1} \otimes s^{-1}),$ with $c$ the
composition map. If $a^{n+1}$ and $b^{n+1}$ are the adjoints of
$\alpha^{n}$ and $\beta^{n}$, and $\tikz[baseline={([yshift = -.1cm]current bounding
  box.center)}]{\draw[thirdmodule] (0,0) -- (0,.5)}$ represents
$P$, then the adjoint of $(\alpha \star \beta)^n$ is
\begin{displaymath}
          \def\w{1.4cm}
          \def\gstart{.4} \def\relw{.5}
          \def\sepw{.1} \def\sepwidth{.1cm} \def\leftw{.3cm}
          \def\rightw{.3cm} \def\hgt{.55cm}
\def\smallsep{.08} \def\modsep{.35}
 (-1)^{|a|-1}\sum_{i = 1}^{n+1} \, \,
\begin{tikzpicture}[baseline={(current bounding
            box.center)}]
          \newBoxDiagram [box height = \hgt, output height = \hgt,
          width = \w, coord = {($(\sepwidth + \leftw + \w,
            0)$)}, decoration yshift label = .2cm, decoration yshift
          label = .05cm, arm height = 3*\hgt, label = {a^{i}}]{g};
          \printNoArms[/secmodulemap]{g}; \printArm{g}{-1};
      
         \g.coord at top(gcoor,
          \gstart); 
\newBoxDiagram[width = \w*\relw, box height =
          \hgt, arm height = \hgt, output height = \hgt, coord =
          {(gcoor)}, output height = \hgt, decoration yshift
          label = .05cm, label = {b^{n-i+2}}]{m};
          \print[/modulemap]{m}; \pgfmathsetmacro{\var}{1-\modsep}
          \printArm{m}{\var} 
\printDec[/decorate, decorate right = \var]{m}{n-i+1}
\printArmSep[arm sep right = .2]{m};
          \pgfmathsetmacro{\var}{\gstart - \relw - \sepw}
          \printArm{g}{\var};
          \printDec[/decorate, decorate right = \var]{g}{i-1}

\pgfmathsetmacro{\var}{-1+2*\smallsep}
          \pgfmathsetmacro{\vara}{\gstart - \relw - \sepw -
            2*\smallsep} \printArmSep[arm sep left = \var, arm sep
          right = \vara]{g}
          \printPeriod{g}
        \end{tikzpicture}
\end{displaymath}
\end{defn}

\begin{defn}
\emph{A morphism of representations $(M, p_{M}) \to (N, p_{N})$ of a nonunital
\Ai-algebra $(A, \mu)$} is
  an element $f \in \hhcu {A} {\Hom {} M N}_{1}$ such that
  $p_{N} \star f + (f^{\geq 1})\circ \mu +  f \star p_{M} = 0.$ The
  composition with a second morphism, $\t f \in \hhcu {A} {\Hom {} N
    P}_{1}$ is $\t f \star f \in \hhcu A {\Hom {} M N}_{1}.$
\end{defn}

\begin{lemma}\label{lem:morphism-represen-iff-diagrams}
Let $(M, p_{M})$ and $(N, p_{N})$ be representations of a nonunital
\Ai-algebra $(A, \mu)$, with adjoints $\lambda_{M}$ and $\lambda_{N},$ respectively. An element $f = (f^{n}) \in \hhcu {A} {\Hom {}
  M N}_{1}$ is a morphism of representations
 if and
only if the adjoint family $(g^{n+1})$ satisfies the equations
\begin{displaymath}
          \def\w{1.2cm}
          \def\gstart{.5} \def\relw{.5}
          \def\sepw{.1} \def\sepwidth{.1cm} \def\leftw{.3cm}
          \def\rightw{.3cm} \def\hgt{.55cm}
\def\smallsep{.08} \def\modsep{.35}
 \sum_{i = 1}^{n+1} \, \,
\begin{tikzpicture}[baseline={(current bounding
            box.center)}]

          \newBoxDiagram [box height =
          \hgt, output height = \hgt, width = \w, coord =
          {($(\sepwidth + \leftw + \w,
            0)$)}, decoration yshift label = .2cm, decoration yshift label
          = .05cm, arm height = 3*\hgt, label = {\lambda_{N}^{i}}]{f};
          \printNoArms[/secmodule]{f};
          \printArm{f}{-1};
     
           \f.coord at top(gcoor,
          \gstart);
 \newBoxDiagram[width = \w*\relw, box height =
          \hgt, arm height = \hgt, output height = \hgt, coord =
          {(gcoor)}, output height = \hgt, decoration yshift
          label = .05cm, label = {g^{n-i+2}}]{g};

          \print[/modulemap]{g}; 
\pgfmathsetmacro{\var}{1-\modsep}
          \printArm{g}{\var} 
\printDec[/decorate, decorate right = \var]{g}{n-i+1}
\printArmSep[arm sep right = .2]{g};
          \pgfmathsetmacro{\var}{\gstart - \relw - \sepw}
          \printArm{f}{\var}; 
\printDec[/decorate, decorate right = \var]{f}{i-1}
\pgfmathsetmacro{\var}{-1+2*\smallsep}
          \pgfmathsetmacro{\vara}{\gstart - \relw - \sepw -
            2*\smallsep} 
\printArmSep[arm sep left = \var, arm sep
          right = \vara]{f}
        \end{tikzpicture} 
\hspace{.2cm} - \hspace{.2cm}
\sum_{i = 2}^{n+1} \sum_{j=0}^{i-2} \, \,
\begin{tikzpicture}[baseline={(current bounding
            box.center)}]
          \def\w{1.45cm};
          \def\gstart{-.2}; \def\relw{.45}
          \def\sepw{.1} \def\sepwidth{.1cm} \def\leftw{.3cm}
          \def\rightw{.3cm} \def\hgt{.55cm}
          \newBoxDiagram [box height =
          \hgt, output height = \hgt, width = \w, coord =
          {($(\sepwidth + \leftw + \w,
            0)$)}, decoration yshift label = .2cm, decoration yshift label
          = .05cm, arm height = 3*\hgt, label = g^{i}]{f};
          \print[/modulemap]{f};
      
          \f.coord at top(gcoor, \gstart); \newBoxDiagram[width =
          \w*\relw, box height = \hgt, arm height = \hgt, output
          height = \hgt, coord = {(gcoor)}, output height = \hgt,
          label = \mu^{n-i+2}]{g}; \print{g}; \printArmSep{g};
          \def\smallsep{.08}\def\modsep{.35}
          \pgfmathsetmacro{\var}{\gstart - \relw - \sepw}
          \printArm{f}{\var}; \printDec[/decorate, decorate left = -1,
          decorate right = \var]{f}{j};
          \pgfmathsetmacro{\var}{-1+\smallsep}
          \pgfmathsetmacro{\vara}{\gstart - \relw - \sepw - \smallsep}
          \printArmSep[arm sep left = \var, arm sep right = \vara]{f}
          \pgfmathsetmacro{\var}{\gstart + \relw + \sepw}
          \printArm{f}{\var}
          \pgfmathsetmacro{\vara}{1-\modsep+\smallsep}
          \printDec[/decorate, decorate left = \var, decorate right = \vara]{f}{i
          - j - 2}
\pgfmathsetmacro{\var}{1-\modsep}
          \pgfmathsetmacro{\vara}{\gstart + \relw + \sepw + \smallsep}
          \pgfmathsetmacro{\varb}{\var+\smallsep} \printArmSep[arm sep
          left = \vara, arm sep right = \var]{f} \printArm{f}{\varb}
        \end{tikzpicture}
\hspace{.2cm} - \hspace{.2cm}
\sum_{i = 1}^{n+1}\, \, \begin{tikzpicture}[baseline={(current bounding
            box.center)}]\def\gstart{.3}\def\relw{.6}
          \newBoxDiagram [box height = \hgt, output height = \hgt,
          width = \w, coord = {($(\sepwidth + \leftw + \w,
            0)$)}, decoration yshift label = .2cm, decoration yshift
          label = .05cm, arm height = 3*\hgt, label = {g^{i}}]{g};
          \printNoArms[/modulemap]{g}; \printArm{g}{-1};
      
         \g.coord at top(gcoor,
          \gstart); 
\newBoxDiagram[width = \w*\relw, box height =
          \hgt, arm height = \hgt, output height = \hgt, coord =
          {(gcoor)}, output height = \hgt, decoration yshift
          label = .05cm, label = {\lambda^{n-i+2}_{M}}]{m};
          \print[/module]{m}; \pgfmathsetmacro{\var}{1-\modsep}
          \printArm{m}{\var} 
\printDec[/decorate, decorate right = \var]{m}{n-i+1}
\printArmSep[arm sep right = .2]{m};
          \pgfmathsetmacro{\var}{\gstart - \relw - \sepw}
          \printArm{g}{\var};
          \printDec[/decorate, decorate right = \var]{g}{i-1}

\pgfmathsetmacro{\var}{-1+2*\smallsep}
          \pgfmathsetmacro{\vara}{\gstart - \relw - \sepw -
            2*\smallsep} \printArmSep[arm sep left = \var, arm sep
          right = \vara]{g}
        \end{tikzpicture} = 0.
\end{displaymath}
\end{lemma}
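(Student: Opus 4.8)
The plan is to mimic the proof of Lemma~\ref{lem:represen-iff-diagrams}, replacing the Gerstenhaber-product and $*$-action bookkeeping there with the adjoint description of the $\star$-action recorded just above. By definition, $f$ is a morphism of representations exactly when
\[
  p_{N} \star f + (f^{\geq 1}) \circ \mu + f \star p_{M} = 0
\]
in $\hhcu{A}{\Hom{}MN}_{0}$ (all three summands have degree $0$, since $|f| = 1$, $|p_{M}| = |p_{N}| = 0$ and $|\mu| = -1$). This equality holds if and only if it holds in each tensor degree $n \geq 0$; applying the adjunction isomorphisms of \ref{sect:notation}.(4) componentwise identifies the tensor-degree-$n$ part of a cochain in $\hhcu{A}{\Hom{}MN}$ with its adjoint, a map $(\Pi A)^{\otimes n} \otimes M \to N$. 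It therefore suffices to compute the adjoint of each of the three summands in tensor degree $n$ and match it to the corresponding diagram.

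First I would translate $(f^{\geq 1}) \circ \mu$. Its adjoint is computed exactly as the term $p \circ \mu_{A}$ in Lemma~\ref{lem:represen-iff-diagrams}: one inserts $\mu^{n-i+2}$ into the $A$-inputs of the adjoint $g^{i}$ and sums over the insertion position $j$, producing the second displayed diagram (the double sum over $2 \leq i \leq n+1$, $0 \leq j \leq i-2$). The remaining two summands are governed by the adjoint-of-$\star$ formula from the definition: the adjoint of $(\alpha \star \beta)^{n}$ is $(-1)^{|a|-1}\sum_{i}(\cdots)$, with $a, b$ the adjoints of $\alpha, \beta$ and the diagram having $a^{i}$ on top of $b^{n-i+2}$. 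Taking $\alpha = p_{N}$, $\beta = f$ gives $a = \lambda_{N}$ (of degree $-1$) and $b = g$, so the sign is $(-1)^{-2} = +1$ and we obtain the first diagram; taking $\alpha = f$, $\beta = p_{M}$ gives $a = g$ (of degree $0$) and $b = \lambda_{M}$, so the sign is $(-1)^{-1} = -1$ and we obtain the third diagram with a minus. Assembling the three contributions yields precisely the displayed system with signs $+, -, -$.

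The main obstacle is sign bookkeeping. One must verify that the sign $(-1)^{|a|-1}$ in the $\star$-adjoint formula, the signs built into the adjunction isomorphisms \ref{sect:notation}.(4), and the diagram conventions of \ref{rem:signs_for_diags} combine to give exactly the signs stated. Two points deserve care: the degree-$1$ nature of $f$ (hence degree-$0$ adjoint $g$) is what flips the sign of the $f \star p_{M}$ term relative to the degree-$0$ case of Lemma~\ref{lem:represen-iff-diagrams}; and the $(f^{\geq 1}) \circ \mu$ term must be checked to carry a minus here, whereas $p \circ \mu_{A}$ contributed with a plus in Lemma~\ref{lem:represen-iff-diagrams}, the discrepancy again being traceable to the degree of $f$ and to the fact that all three terms now sit on the same side of the defining equation. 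As in Lemma~\ref{lem:represen-iff-diagrams}, the $n = 0$ components ($g^{1}$, $\lambda_{M}^{1} = \delta_{M}$, $\lambda_{N}^{1} = \delta_{N}$) fit uniformly into the general formulas, so no separate low-degree argument is needed.
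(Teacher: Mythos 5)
Your overall strategy is the same as the paper's: the paper's entire proof is the one-sentence assertion that each displayed term is the adjoint of $(-1)$ times the corresponding term of $p_N\star f + (f^{\geq 1})\circ\mu + f\star p_M = 0$, and you carry out the same term-by-term matching, correctly observing that the adjoint is a linear isomorphism in each tensor degree so the equation may be checked after passing to adjoints. Your treatment of the two $\star$-terms is fine: the sign $(-1)^{|a|-1}$ from the adjoint-of-$\star$ formula gives $+1$ for $p_N\star f$ (since $|\lambda_N|=-1$) and $-1$ for $f\star p_M$ (since $|g|=0$), matching the first and third displayed diagrams.

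The gap is the middle term. You compute the adjoint of $(f^{\geq 1})\circ\mu$ ``exactly as the term $p\circ\mu_A$ in Lemma \ref{lem:represen-iff-diagrams}''; that computation, namely $\ev(s^{-1}(f^i\circ u)\otimes 1_M) = \ev(s^{-1}f^i\otimes 1_M)\circ(u\otimes 1_M)$ with no Koszul sign because $1_M$ has degree $0$, produces the second diagram with a \emph{plus} sign, independent of the degree of $f$. You then assert that this term ``must be checked to carry a minus,'' attributing the flip to the degree of $f$ and to all three terms sitting on one side of the defining equation; neither mechanism produces a sign: the adjoint commutes with precomposition by $1^{\otimes j}\otimes\mu\otimes 1^{\otimes \cdot}$ without any sign regardless of $|f|$, and in Lemma \ref{lem:represen-iff-diagrams} the two terms likewise sit on one side of $p\circ\mu_A - \muend * p = 0$ yet $p\circ\mu_A$ contributes with a plus there. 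So your own bookkeeping yields $[\mathrm{I}]+[\mathrm{II}]-[\mathrm{III}]=0$ rather than the stated $[\mathrm{I}]-[\mathrm{II}]-[\mathrm{III}]=0$, and the discrepancy is asserted away rather than resolved. To close this you must locate the actual source of the relative minus sign on the second diagram; the reliable route is to compare with the comodule characterization in the proposition that follows, where the displayed system arises literally as $\lambda_N\,\psi^{-1}(g) - g\,\phi^{-1}(\lambda_M) = 0$ and the $+,-,-$ pattern is forced, and then to trace how that expression relates to $p_N\star f + (f^{\geq 1})\circ\mu + f\star p_M$. As written, the sign of the middle term is not established.
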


Indeed, each of the three terms above is the adjoint of (-1) times the corresponding
term in the definition of morphism.

\subsection{Representations of strictly unital \Ai-algebras}


\begin{defn}
Let $(A,1, \mu)$ be a strictly unital \Ai-algebra. A \emph{strictly
  unital representation} on a complex $(M, \delta_{M})$ is a strictly unital
\Ai-morphism $p \in \hhc A {\End M}_{0}$. A \emph{morphism of strictly unital
representations $(M, p_{M})\to (N, p_{N})$} is a morphism of
representations $f$ such that $f \in \hhcu {\o A}
{\Hom {}M N}_{1},$ i.e.,
$$f^{n}([a_{1}|\ldots|a_{i-1}|1|a_{i+1}|\ldots|a_{n}]) = 0 \quad \text{for all $n
\geq 1$}.$$
\end{defn}

In string diagrams, $\tikz[baseline={([yshift = -.1cm]current bounding
  box.center)}]{\draw[thick] (0,0) -- (0,.5)}$ will now denote $\Pi \o A$
(previously it denoted $\Pi A$), while  $\tikz[baseline={([yshift = -.1cm]current bounding
    box.center)}]{\draw[module] (0,0) -- (0,.5)}$ continues to represent $M$, $\tikz[baseline={([yshift = -.1cm]current bounding
  box.center)}]{\draw[secmodule] (0,0) -- (0,.5)}$ represents $N$, and $\tikz[baseline={([yshift = -.1cm]current bounding
    box.center)}]{\draw[unit] (0,0) -- (0,.5)}$ represents $\Pi k.$

\begin{lem}\label{lem:diagrams-for-s-ual-reps}
Let $(A, 1, \o \mu + h + \msu)$ be an \Ai-algebra with split unit.
\begin{enumerate}
\item A strictly unital element $p = \o p +
\gsu \in \hhc A {\End M}_{0}$, with $\o p \in \hhcu {\o A} {\End M}_{0}$, is a representation if and only if the
adjoint family $\o \lambda = (\o \lambda^{n+1})$ of $\o p$, where $\o \lambda^{1} = \delta_{M}$, satisfies
\begin{displaymath}
            \def\w{1.4cm}
          \def\gstart{-.2} \def\relw{.45}
          \def\sepw{.1} \def\sepwidth{.1cm} \def\leftw{.3cm}
          \def\rightw{.3cm} \def\hgt{.5cm}
 \sum_{i = 2}^{n+1} \sum_{j=0}^{i-2} \, \,
\begin{tikzpicture}[baseline={(current bounding
            box.center)}]
          \newBoxDiagram [box height =
          \hgt, output height = \hgt, width = \w, coord =
          {($(\sepwidth + \leftw + \w,
            0)$)}, decoration yshift label = .2cm, decoration yshift label
          = .05cm, arm height = 3*\hgt, label = {\o \lambda}^{i}]{f};
          \print[/module]{f};
      
          \f.coord at top(gcoor, \gstart); 
\newBoxDiagram[width = 
          \w*\relw, box height = \hgt, arm height = \hgt, output
          height = \hgt, coord = {(gcoor)}, output height = \hgt,
          label = {\o \mu}^{n-i+2}]{g}; \print{g}; \printArmSep{g};
          \def\smallsep{.08}\def\modsep{.35}
          \pgfmathsetmacro{\var}{\gstart - \relw - \sepw}
          \printArm{f}{\var}; \printDec[/decorate, decorate left = -1,
          decorate right = \var]{f}{j};
          \pgfmathsetmacro{\var}{-1+\smallsep}
          \pgfmathsetmacro{\vara}{\gstart - \relw - \sepw - \smallsep}
          \printArmSep[arm sep left = \var, arm sep right = \vara]{f}
          \pgfmathsetmacro{\var}{\gstart + \relw + \sepw}
          \printArm{f}{\var}
          \pgfmathsetmacro{\vara}{1-\modsep+\smallsep}
          \printDec[/decorate, decorate left = \var, decorate right = \vara]{f}{i
          - j - 2}
\pgfmathsetmacro{\var}{1-\modsep}
          \pgfmathsetmacro{\vara}{\gstart + \relw + \sepw + \smallsep}
          \pgfmathsetmacro{\varb}{\var+\smallsep} \printArmSep[arm sep
          left = \vara, arm sep right = \var]{f} \printArm{f}{\varb}
        \end{tikzpicture}
\hspace{.2cm} + \hspace{.2cm}
\sum_{i = 1}^{n+1}\, \, \begin{tikzpicture}[baseline={(current bounding
            box.center)}]
  \def\w{1.1cm}; \def\gstart{.35};
           \def\relw{.6}
          \newBoxDiagram [box height = \hgt, output height = \hgt,
          width = \w, coord = {($(\sepwidth + \leftw + \w,
            0)$)}, decoration yshift label = .2cm, decoration yshift
          label = .05cm, arm height = 3*\hgt, label = {\o \lambda^{i}}]{f};
          \printNoArms[/module]{f}; \printArm{f}{-1};
      
          \def\smallsep{.08}\def\modsep{.35} \f.coord at top(gcoor,
          \gstart); 
\newBoxDiagram[width = \w*\relw, box height =
          \hgt, arm height = \hgt, output height = \hgt, coord =
          {(gcoor)}, output height = \hgt,decoration yshift
          label = .05cm, label = {\o \lambda^{n-i+2}}]{g};
          \print[/module]{g}; \pgfmathsetmacro{\var}{1-\modsep}
          \printArm{g}{\var} 
\printDec[/decorate, decorate right = \var]{g}{n-i+1}
\printArmSep[arm sep right = .2]{g};
          \pgfmathsetmacro{\var}{\gstart - \relw - \sepw}
          \printArm{f}{\var};
          \printDec[/decorate, decorate right = \var]{f}{i-1}

\pgfmathsetmacro{\var}{-1+2*\smallsep}
          \pgfmathsetmacro{\vara}{\gstart - \relw - \sepw -
            2*\smallsep} \printArmSep[arm sep left = \var, arm sep
          right = \vara]{f}
        \end{tikzpicture}
\hspace{.1cm} + \hspace{.1cm}
\begin{tikzpicture}

  \def\gstart{-.3}
\def\wida{.33cm} 
    \def\widb{.30cm}
 \def\widk{.33cm}
 \def\dotsw{.32cm}
 \def\sepw{.1cm}
 \def\wdth{.75cm}
\setlength{\lengg}{0pt}

      \newBoxDiagram[width = \wdth, box height = \hgt, arm height =
      3*\hgt, output height = \hgt, label =
      \small{(\cong)}, coord = {($(0,
        0)$)}]{isom}
      \printNoArms[/module]{isom}
      \printRightArm[/module]{isom}{1}

\setlength{\whighbox}{\wdth - \sepw}
\isom.coord at top(hghcoor,\gstart)
\newBoxDiagram[width/.expand once = \whighbox, box height = \hgt, arm height =
      \hgt, output height = \hgt, label = {s^{-1}h^{n}}, output label
      = \Pi^{-1}, coord = {(hghcoor)}]{gmap}
       \print[/unit output]{gmap}
\printArmSep{gmap};
\printArm{gmap}{-1};
\printArm{gmap}{1};
\printDec[color = white]{gmap}{test}
    \end{tikzpicture} 
\hspace{.1cm} = \hspace{.1cm} 0.
\end{displaymath}
\item An
  element $f \in \hhcu {\o A} {\Hom {} M N}_{1}$ is a morphism of
  strictly unital
  representations $(M,\o p_{M}) \to (N, \o p_{N})$ if and only if the
  following holds, where $g, \o \lambda_{M}, \o \lambda_{N}$ are the adjoint
  families of $f, \o p_{M}, \o p_{N}:$
\begin{displaymath}
          \def\w{1.2cm}
          \def\gstart{.5} \def\relw{.5}
          \def\sepw{.1} \def\sepwidth{.1cm} \def\leftw{.3cm}
          \def\rightw{.3cm} \def\hgt{.55cm}
\def\smallsep{.08} \def\modsep{.35}
 \sum_{i = 1}^{n+1} \, \,
\begin{tikzpicture}[baseline={(current bounding
            box.center)}]

          \newBoxDiagram [box height =
          \hgt, output height = \hgt, width = \w, coord =
          {($(\sepwidth + \leftw + \w,
            0)$)}, decoration yshift label = .2cm, decoration yshift label
          = .05cm, arm height = 3*\hgt, label = {\o \lambda_{N}^{i}}]{f};
          \printNoArms[/secmodule]{f};
          \printArm{f}{-1};
     
           \f.coord at top(gcoor,
          \gstart);
 \newBoxDiagram[width = \w*\relw, box height =
          \hgt, arm height = \hgt, output height = \hgt, coord =
          {(gcoor)}, output height = \hgt, decoration yshift
          label = .05cm, label = {g^{n-i+2}}]{g};

          \print[/modulemap]{g}; 
\pgfmathsetmacro{\var}{1-\modsep}
          \printArm{g}{\var} 
\printDec[/decorate, decorate right = \var]{g}{n-i+1}
\printArmSep[arm sep right = .2]{g};
          \pgfmathsetmacro{\var}{\gstart - \relw - \sepw}
          \printArm{f}{\var}; 
\printDec[/decorate, decorate right = \var]{f}{i-1}
\pgfmathsetmacro{\var}{-1+2*\smallsep}
          \pgfmathsetmacro{\vara}{\gstart - \relw - \sepw -
            2*\smallsep} 
\printArmSep[arm sep left = \var, arm sep
          right = \vara]{f}
        \end{tikzpicture} 
\hspace{.2cm} - \hspace{.2cm}
\sum_{i = 2}^{n+1} \sum_{j=0}^{i-2} \, \,
\begin{tikzpicture}[baseline={(current bounding
            box.center)}]
          \def\w{1.45cm};
          \def\gstart{-.2}; \def\relw{.45}
          \def\sepw{.1} \def\sepwidth{.1cm} \def\leftw{.3cm}
          \def\rightw{.3cm} \def\hgt{.55cm}
          \newBoxDiagram [box height =
          \hgt, output height = \hgt, width = \w, coord =
          {($(\sepwidth + \leftw + \w,
            0)$)}, decoration yshift label = .2cm, decoration yshift label
          = .05cm, arm height = 3*\hgt, label = g^{i}]{f};
          \print[/modulemap]{f};
      
          \f.coord at top(gcoor, \gstart); \newBoxDiagram[width =
          \w*\relw, box height = \hgt, arm height = \hgt, output
          height = \hgt, coord = {(gcoor)}, output height = \hgt,
          label = \o \mu^{n-i+2}]{g}; \print{g}; \printArmSep{g};
          \def\smallsep{.08}\def\modsep{.35}
          \pgfmathsetmacro{\var}{\gstart - \relw - \sepw}
          \printArm{f}{\var}; \printDec[/decorate, decorate left = -1,
          decorate right = \var]{f}{j};
          \pgfmathsetmacro{\var}{-1+\smallsep}
          \pgfmathsetmacro{\vara}{\gstart - \relw - \sepw - \smallsep}
          \printArmSep[arm sep left = \var, arm sep right = \vara]{f}
          \pgfmathsetmacro{\var}{\gstart + \relw + \sepw}
          \printArm{f}{\var}
          \pgfmathsetmacro{\vara}{1-\modsep+\smallsep}
          \printDec[/decorate, decorate left = \var, decorate right = \vara]{f}{i
          - j - 2}
\pgfmathsetmacro{\var}{1-\modsep}
          \pgfmathsetmacro{\vara}{\gstart + \relw + \sepw + \smallsep}
          \pgfmathsetmacro{\varb}{\var+\smallsep} \printArmSep[arm sep
          left = \vara, arm sep right = \var]{f} \printArm{f}{\varb}
        \end{tikzpicture}
\hspace{.2cm} - \hspace{.2cm}
\sum_{i = 1}^{n+1}\, \, \begin{tikzpicture}[baseline={(current bounding
            box.center)}]\def\gstart{.3}\def\relw{.6}
          \newBoxDiagram [box height = \hgt, output height = \hgt,
          width = \w, coord = {($(\sepwidth + \leftw + \w,
            0)$)}, decoration yshift label = .2cm, decoration yshift
          label = .05cm, arm height = 3*\hgt, label = {g^{i}}]{g};
          \printNoArms[/modulemap]{g}; \printArm{g}{-1};
      
         \g.coord at top(gcoor,
          \gstart); 
\newBoxDiagram[width = \w*\relw, box height =
          \hgt, arm height = \hgt, output height = \hgt, coord =
          {(gcoor)}, output height = \hgt, decoration yshift
          label = .05cm, label = {\o \lambda^{n-i+2}_{M}}]{m};
          \print[/module]{m}; \pgfmathsetmacro{\var}{1-\modsep}
          \printArm{m}{\var} 
\printDec[/decorate, decorate right = \var]{m}{n-i+1}
\printArmSep[arm sep right = .2]{m};
          \pgfmathsetmacro{\var}{\gstart - \relw - \sepw}
          \printArm{g}{\var};
          \printDec[/decorate, decorate right = \var]{g}{i-1}

\pgfmathsetmacro{\var}{-1+2*\smallsep}
          \pgfmathsetmacro{\vara}{\gstart - \relw - \sepw -
            2*\smallsep} \printArmSep[arm sep left = \var, arm sep
          right = \vara]{g}
        \end{tikzpicture} = 0.
\end{displaymath}
\end{enumerate}
\end{lem}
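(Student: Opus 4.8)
The plan is to reduce both parts to the nonunital characterizations already proved---Lemma~\ref{lem:represen-iff-diagrams} for representations and Lemma~\ref{lem:morphism-represen-iff-diagrams} for their morphisms---and then to apply the ``strictly unital reduction'' used in Lemma~\ref{lem:strictly-unital-morphisms}: one shows the defining equation holds automatically on every tensor that contains the unit $[1]$, so that it need only be tested on $\Tco{\Pi \o A}$, where the decomposition $\mu = \o \mu + h + \msu$ collapses each term to the displayed diagrams.

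For part (1) the decisive point is that a representation is by definition an \Ai-morphism $p\colon (A,\mu) \to (\End M, \muend)$, and $(\End M, \muend)$ is strictly unital with strict unit $\id_M$, being a dg-algebra. Thus Lemma~\ref{lem:strictly-unital-morphisms} applies with $B = \End M$: writing $p = \o p + \gsu$ (so that $\o p \in \hhcu{\o A}{\End M}_0$ records the reduced components of $p$ together with the counit $\delta_M = \o\lambda^1$), the element $p$ is a representation if and only if
\[
  \muend * \o p = \o p \circ \o \mu + \gsu \circ h .
\]
I would then pass to adjoints. The adjoint dictionary built in the proof of Lemma~\ref{lem:represen-iff-diagrams} already identifies the adjoint of $\o p \circ \o \mu$ with the first displayed diagram (an $\o\lambda^i$ absorbing $\o\mu^{n-i+2}$) and of $\muend * \o p$ with the second (an $\o\lambda^i$ absorbing $\o\lambda^{n-i+2}$, the boundary terms $\o\lambda^1 = \delta_M$ coming from the differential $\muend^1$). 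The only new computation is the adjoint of $\gsu \circ h$: since $\gsu^1$ is the projection $\Pi A \onto \Pi k$ followed by $[1] \mapsto [\id_M]$, and $\gsu^{\ge 2} = 0$, one has $(\gsu \circ h)^n = \gsu^1 h^n$, whose adjoint sends $[\bs a] \otimes m \mapsto (s^{-1} h^n[\bs a])\, m$; this is exactly the third diagram. Rearranging signs gives the stated equation.

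For part (2) there is no ready-made morphism analogue of Lemma~\ref{lem:strictly-unital-morphisms}, so I would argue directly from Lemma~\ref{lem:morphism-represen-iff-diagrams}, which says $f$ is a morphism of representations if and only if the three full diagrams, built from $\lambda_N$, $\mu$, $\lambda_M$ and $g$, vanish in every tensor degree. Since $f \in \hhcu{\o A}{\Hom{}MN}_1$ and both representations are strictly unital, I would first check, by evaluating on $[a_1|\dots|a_{l-1}|1|a_{l+1}|\dots|a_n]$ as in Lemma~\ref{lem:strictly-unital-morphisms}, that these equations hold automatically on any tensor containing a unit, reducing the test to $\Tco{\Pi \o A}$. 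On reduced inputs the summands $h$ and $\msu$ of $\mu$ contribute nothing to $f^{\ge 1} \circ \mu$: the term $f^{\ge 1} \circ h$ dies because $h$ outputs into $\Pi k$ and $f$ annihilates units, while $f^{\ge 1} \circ \msu$ dies because $\msu^2$ is zero on two reduced inputs (each of its three summands in Definition~\ref{def-msu} involves a unit). Likewise $\lambda_M, \lambda_N$ restrict to their reduced families $\o\lambda_M, \o\lambda_N$, so the three full diagrams become the three reduced diagrams, giving the equivalence.

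The main obstacle in both parts is the unit bookkeeping in the automatic-vanishing step: one must evaluate the full structure equations on tensors $[a_1|\dots|a_{l-1}|1|a_{l+1}|\dots|a_n]$ and verify, case by case and with the sign conventions of Remark~\ref{rem:signs_for_diags}, that the unit-absorbing terms cancel in pairs---the same mechanism that forces $[\o\mu,\msu] = 0$ in Lemma~\ref{lem:diagrams-for-sual}. Tracking the Koszul signs from the desuspensions $s^{-1}$ in the adjoint isomorphisms, and keeping straight which slot the inserted unit occupies, is where the genuine care lies; matching the surviving terms to the displayed diagrams is then routine.
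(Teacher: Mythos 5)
Your proposal is correct and follows essentially the same route as the paper: part (1) is exactly the paper's argument (apply Lemma \ref{lem:strictly-unital-morphisms} with $B = \End M$, which has strict unit $\id_{M}$, and match the adjoints of $\o p \circ \o \mu$, $\muend * \o p$, and $\gsu \circ h$ with the three displayed diagrams, the last collapsing to $\gsu^{1}h^{n}$, i.e., the $s^{-1}h^{n}$ term). For part (2), your direct evaluation of the full morphism equation on tensors containing a unit is the same computation the paper packages as the identity $\gsu \star f - f^{\geq 1}\circ \msu + f \star \gsu = 0$; after that step both arguments reduce the equation to $\o p_{N} \star f + f^{\geq 1} \circ \o \mu + f \star \o p_{M} = 0$ and match adjoints.
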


\begin{proof}
By Lemma
\ref{lem:strictly-unital-morphisms}, $\o p + \gsu$ is an \Ai-morphism if and
only if $\o p \circ \o \mu - \mu_{\End {}} * \o p  + \gsu \circ
h_{A}= 0$. One checks that the adjoints of the three terms of this
equation agree with the three families of displayed diagrams, and this
proves part 1.

For part 2, the element $f$ is a morphism of representation if and only if
$p_{N} \star f + f^{\geq 1} \circ \mu + f \star p_{M} = 0.$ Using the
decompositions $\mu = \o \mu + h + \msu$, $p_{M} = \o p_{M} + \gsu,$ and $p_{N} = \o p_{N} +
\gsu,$ together with the equality $\gsu \star f - f^{\geq 1}
\circ \msu + f \star \gsu = 0$, we see $f$ is a morphism if and
only if $\o p_{N} \star f + f^{\geq 1}
\circ \o \mu + f \star \o p_{M} = 0.$
Each of the three terms of this equation is the adjoint of $(-1)$ times the
corresponding term in the displayed equation.
\end{proof}

\begin{ex}\label{ex:strictly-unital-modules-over-Koszul-complex}
 Let $(A, 1, \mu)$ be the Koszul complex on $f \in k,$ see
 \ref{ex:curved-bar-constr-of-koszul-cx}, and $M$ a graded
 module. Let $\o p_{M} \in \hhcu {\o A} {\End M}_{0}$ be an arbitrary element
 with adjoint family $(\o \lambda^{n})$ and set $\sigma^{n}: M \xra{\cong}
 k[e]^{\otimes n} \otimes M \xra{\overbar \lambda^{n}} M,$ a degree $2n -1$
 endomorphism of $M$. Since $\o \mu = 0,$ we see that $\o p_{M}$ is a
 representation if and only if $\sigma^{1} \sigma^0 + \sigma^0
 \sigma^{1} = -f \cdot 1_{M}$ and $\sum_{i = 0}^n \sigma^{n-i} \sigma^i
 = 0$ for $n \geq 2.$ Such a system of maps was first considered by
 Shamash \cite{MR0241411}, who assumed that $M$ was the $k$-free
 resolution of a $k/(f)$-module, and has since been important in
 the construction of free resolutions in commutative algebra; see e.g., \cite[\S 3.1]{MR1648664}
 and the references contained there.
\end{ex}

\subsection{Comodules}
If $(A, \mu)$ is a nonunital \Ai-algebra, set $\Baru A$ to be the
coaugmented bar construction $(\Tcou
{\Pi A}, d)$ (here $d|_{\Tco {\Pi A}} = \Phi^{-1}(\mu)$ and $d(1) = 0$).

\begin{defn}
  Let $C$ be a graded coalgebra. The \emph{cofree $C$-comodule on a
    graded module $M$} has underlying graded module $C \otimes M$ and
  comultiplication $\Delta_C \otimes 1$. If $d$ is a graded
  coderivation of $C$ and $P$ is a graded
  $C$-comodule, a \emph{coderivation} of $P$ (with respect to $d$) is
  a homogeneous map $d_P: P \to P,$ with $|d_P| = |d|,$ that satisfies
  $(d \otimes 1 + 1 \otimes d_{P})\Delta_{P} = \Delta_P d_{P}.$ We
  denote by $\Coder^{d}(P, P)$ the set of coderivations of $P$. If
  $(C, d)$ is a dg-coalgebra, a dg-comodule is a pair $(P, d_{P})$ with
  $P$ a graded comodule and $d_{P}$ an element of $\Coder^{d}(P, P)$ such that
  $d_{P}^{2} = 0.$ A morphism of dg-comodules is a morphism of
  comodules that commutes with the given coderivations. A
  dg-comodule is 
  \emph{cofree} if the underlying comodule is cofree.
\end{defn}

Cofree comodules satisfy the linear analogue of
  \ref{lem:isom_hhc_coder}.
\begin{lemma}
  \label{rem:univ-props-cofree-comods}
 Let $(C, \epsilon)$ be a graded coalgebra with counit
  $\epsilon: C \to k$, and $M$ a graded
  module. The following hold.
  \begin{enumerate}
  \item  For any degree $n$ coderivation $d$ of $C$, the following is an isomorphism,
    \[ \phi: \operatorname{Coder}^{d}(C \otimes M, C \otimes M)
      \xra{\cong} \Homd {} {n} {C \otimes M} {M}\]
    \begin{displaymath}
      d_{C \otimes M} \mapsto (\epsilon \otimes 1) d_M,
    \end{displaymath}
    with
    $\phi^{-1}(m) = d \otimes 1 + (1 \otimes m)(\Delta_C \otimes 1).$ A
    coderivation $\phi^{-1}(m)$ is a differential, i.e., squares to
    zero, if and only if $m \phi^{-1}(m) = 0.$
  \item For any graded $C$-comodule $P$, the following is an isomorphism,
    \[ \psi:\Hom C P {C \otimes M} \xra{\cong} \Hom {} P M,\]
    \begin{displaymath}
      \beta \mapsto (\epsilon \otimes 1) \beta,
    \end{displaymath}
 where
    $\Hom C {-} {-}$ denotes morphisms of graded $C$-comodules. The
    inverse is given by $\psi^{-1}(\alpha) =  (1 \otimes \alpha) \Delta_P$. A morphism
 $\psi^{-1}(g)$ commutes with coderivations
    $d_{P}$ of $P$ and $\phi^{-1}(m)$ of $C \otimes
    M$ if and only if $g d_{P} = m_{N} \psi^{-1}(g).$
  \end{enumerate}
\end{lemma}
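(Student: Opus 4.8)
The plan is to prove both statements as the comodule-theoretic analogues of Lemma \ref{lem:isom_hhc_coder}, with the counit $\epsilon$ playing the role that the corestriction $\pi_1$ played there: every coderivation of, and every comodule morphism into, a cofree comodule is reconstructed from its composite with $\epsilon \otimes 1$. Since part (1) will reduce to part (2), I would prove (2) first. Throughout I will freely use the counit axioms $(\epsilon \otimes 1)\Delta_C = 1 = (1 \otimes \epsilon)\Delta_C$ and the comodule counit and coassociativity axioms for $P$, together with the standard fact that $\epsilon d = 0$ for any coderivation $d$ of $C$ (which follows by applying $1 \otimes \epsilon$ to the coderivation identity $\Delta_C d = (d \otimes 1 + 1 \otimes d)\Delta_C$ and then the counit axiom).

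For part (2): first, $\psi^{-1}(\alpha) = (1 \otimes \alpha)\Delta_P$ is a morphism of comodules, which is exactly the comodule coassociativity $(\Delta_C \otimes 1)\Delta_P = (1 \otimes \Delta_P)\Delta_P$. Next, $\psi\psi^{-1} = \id$ is immediate from the comodule counit axiom $(\epsilon \otimes 1)\Delta_P = 1$. For $\psi^{-1}\psi = \id$, given a comodule morphism $\beta$ I would apply $1 \otimes \epsilon \otimes 1$ to the defining identity $(\Delta_C \otimes 1)\beta = (1 \otimes \beta)\Delta_P$; the left side collapses to $\beta$ by the counit axiom and the right side becomes $\psi^{-1}(\psi(\beta))$. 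Finally, for the commuting clause, set $D = \phi^{-1}(m)\psi^{-1}(g) - \psi^{-1}(g)d_P$; expanding $\Delta_{C\otimes M}D$ using that $\psi^{-1}(g)$ is a comodule morphism and that $d_P$ and $\phi^{-1}(m)$ are coderivations, the two $d \otimes \psi^{-1}(g)$ contributions cancel and one finds $\Delta_{C\otimes M}D = (1 \otimes D)\Delta_P$, so $D$ is again a comodule morphism. Hence, by the injectivity of $\psi$ just established, $D = 0$ if and only if $(\epsilon\otimes 1)D = 0$; and $(\epsilon \otimes 1)D = m\,\psi^{-1}(g) - g\,d_P$ since $(\epsilon\otimes 1)\phi^{-1}(m) = m$ and $(\epsilon \otimes 1)\psi^{-1}(g) = g$. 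This is the asserted equivalence $g\,d_P = m\,\psi^{-1}(g)$, the linear analogue of Corollary \ref{cor:morphisms-commuting-with-coderivations}.

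For part (1): write $\phi^{-1}(m) = d \otimes 1 + \psi^{-1}(m)$, where $\psi^{-1}(m) = (1 \otimes m)(\Delta_C \otimes 1)$. The summand $d \otimes 1$ is a $d$-coderivation (a one-line check from the coderivation identity for $d$), and $\psi^{-1}(m)$ is a comodule morphism by part (2), hence a coderivation with respect to $0$; their sum is therefore a $d$-coderivation, so $\phi^{-1}$ is well defined. The identity $\phi\phi^{-1} = \id$ follows from the counit axiom together with $\epsilon d = 0$. For $\phi^{-1}\phi = \id$, given a $d$-coderivation $d_{C\otimes M}$, the difference $e := d_{C\otimes M} - d\otimes 1$ is a $0$-coderivation, i.e.\ a comodule endomorphism of $C \otimes M$; applying part (2) with $P = C \otimes M$ gives $e = \psi^{-1}(\psi(e)) = \psi^{-1}(m)$ with $m = (\epsilon\otimes 1)d_{C\otimes M}$, whence $d_{C\otimes M} = \phi^{-1}(m)$. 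For the differential clause, I would compute $(\epsilon \otimes 1)\bigl(\phi^{-1}(m)^2\bigr) = m\,\phi^{-1}(m)$; since $\phi^{-1}(m)^2$ is a coderivation with respect to $d^2 = 0$ (the relevant case, e.g.\ for $\Baru A$), injectivity of the corestriction forces $\phi^{-1}(m)^2 = 0$ exactly when $m\,\phi^{-1}(m) = 0$.

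The main obstacle is not conceptual but bookkeeping: keeping the Koszul signs straight when interchanging $\epsilon$, $d$, $m$, $g$, and $\Delta$ across tensor factors, and in particular verifying that the $d \otimes \psi^{-1}(g)$ terms in $\Delta_{C\otimes M}D$ cancel. This cancellation is what makes $D$ a comodule morphism and thus lets injectivity finish the argument; it is the precise place where one uses that the morphism $\psi^{-1}(g)$ has degree zero, so that no sign obstructs the cancellation, exactly as in Corollary \ref{cor:morphisms-commuting-with-coderivations}. I would also record the fact $\epsilon d = 0$ explicitly, since it is what annihilates the spurious $d \otimes 1$ contributions both in $\phi\phi^{-1}$ and in the analysis of $e$.
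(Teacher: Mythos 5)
The paper does not actually prove this lemma---it is stated bare, as ``the linear analogue of \ref{lem:isom_hhc_coder}''---so there is no argument of the author's to compare against. Judged on its own, your proof is correct and complete, and it runs on exactly the principle the paper clearly intends: every coderivation of, and every comodule map into, the cofree comodule $C\otimes M$ is determined by its corestriction $(\epsilon\otimes 1)(-)$, just as coderivations of $\Tco{\Pi A}$ are determined by $\pi_1$ in Lemma \ref{lem:isom_hhc_coder} and Corollary \ref{cor:morphisms-commuting-with-coderivations}. Your ordering (part (2) first; then part (1) via the decomposition $\phi^{-1}(m)=d\otimes 1+\psi^{-1}(m)$ and the observation that the difference of two $d$-coderivations is a comodule endomorphism) is the efficient route, and your verification that $D=\phi^{-1}(m)\psi^{-1}(g)-\psi^{-1}(g)d_P$ is again a comodule morphism---the two $d\otimes\psi^{-1}(g)$ terms cancelling precisely because $\psi^{-1}(g)$ has degree zero---is the right linear analogue of Corollary \ref{cor:morphisms-commuting-with-coderivations}.

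Two small points are worth making explicit rather than parenthetical. First, the final clause of part (1) as literally stated is false for a general coderivation $d$: $\phi^{-1}(m)^2$ is a coderivation with respect to $d^2$, and applying your part (1) to the coderivation $d^2$ of $C$ gives $\phi^{-1}(m)^2=d^2\otimes 1+(1\otimes m\phi^{-1}(m))(\Delta_C\otimes 1)$, so $m\phi^{-1}(m)=0$ only forces $\phi^{-1}(m)^2=d^2\otimes 1$, which is nonzero in the curved case. Your restriction to $d^2=0$ is exactly what the uncurved proposition following the lemma needs; for the curved theorem in the next subsection the paper instead uses injectivity of $D\mapsto(\epsilon\otimes 1)D$ on coderivations with respect to $d^2=\ad{-s^{-1}h}$, and your proof of the isomorphism in part (1) supplies that too, since it is valid for an arbitrary coderivation of $C$. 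Second, your derivation of $\epsilon d=0$ as written only yields $(1\otimes\epsilon d)\Delta_C=0$; one further application of $\epsilon$ and the counit axiom finishes it. Both are cosmetic; the argument is sound.
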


Note the above properties emphasize the need to adjoin a counit to $\Bar
A$.

\begin{prop}
  Let $(A, \mu)$ be a nonunital \Ai-algebra with counital bar construction
  $\Baru A$, and $M, N$ graded modules.

  \begin{enumerate}
  \item An element $p_{M} \in \hhcu A {\End M}_{0}$ is a
    representation of $(A, \mu)$ if and only if, for $\lambda_{M}$ the
    adjoint family, the pair $(\Baru A \otimes M,
    \phi^{-1}(\lambda_{M}))$ is a dg-$\Baru A$ comodule.

    \item An element $f \in \hhcu A {\Hom {} M N}_{1}$ is a
      morphism of representations $(M, p_{M}) \to
      (N,p_{N})$ if and only if $\psi^{-1}(g): (\Baru
      A \otimes M,\phi^{-1}(\lambda_{M})) \to (\Baru
      A \otimes N,\phi^{-1}(\lambda_{N}))$ is a morphism of dg-$\Baru
      A$ comodules, where $g, \lambda_{M}, \lambda_{N}$ are the adjoint families
      of $f, p_{M}, p_{N}.$
  \end{enumerate}
\end{prop}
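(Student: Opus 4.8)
The plan is to prove both statements by reducing each ``if and only if'' to a single bilinear identity via the universal properties of cofree comodules in Lemma~\ref{rem:univ-props-cofree-comods}, and then matching that identity term-by-term with the diagrammatic characterizations of representations (Lemma~\ref{lem:represen-iff-diagrams}) and of morphisms of representations (Lemma~\ref{lem:morphism-represen-iff-diagrams}). Write $\Delta$ for the coproduct of $\Baru A$ and $d$ for its coderivation, so that $d|_{\Tco{\Pi A}} = \Phi^{-1}(\mu)$ and $d(1) = 0$. For part (1), $\lambda_M$ is the degree $-1$ map $\Baru A \otimes M \to M$ adjoint to $p_M$, so Lemma~\ref{rem:univ-props-cofree-comods}.(1) gives that $\phi^{-1}(\lambda_M)$ is a coderivation of the cofree comodule $\Baru A \otimes M$ over $d$; hence $(\Baru A \otimes M, \phi^{-1}(\lambda_M))$ is a dg-comodule precisely when $\phi^{-1}(\lambda_M)^2 = 0$, and by the last assertion of that lemma this holds if and only if $\lambda_M\,\phi^{-1}(\lambda_M) = 0$.

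It therefore suffices to identify $\lambda_M\,\phi^{-1}(\lambda_M)$ with the adjoint of the left-hand side of the representation equation in Lemma~\ref{lem:represen-iff-diagrams}. Expanding $\phi^{-1}(\lambda_M) = d \otimes 1 + (1 \otimes \lambda_M)(\Delta \otimes 1)$ gives
\[
\lambda_M\,\phi^{-1}(\lambda_M) = \lambda_M (d \otimes 1) + \lambda_M (1 \otimes \lambda_M)(\Delta \otimes 1).
\]
In the first summand, the explicit formula for $\Phi^{-1}(\mu)$ in Lemma~\ref{lem:isom_hhc_coder}.(1) inserts $\mu$ at every position of the tensor word before $\lambda_M$ is applied, producing exactly the double sum $\sum_{i,j}\lambda^{i}(1^{\otimes j}\otimes \mu^{n-i+2}\otimes 1^{\otimes i-j-2}\otimes 1_M)$ (the sum over $i$ starts at $2$ because $\lambda^1$ has no algebra inputs). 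In the second summand, the deconcatenation coproduct of the counital coalgebra $\Baru A$ splits the inputs in all ways $i = 0, \ldots, n$; composing the two copies of $\lambda_M$ and reindexing yields the single sum $\sum_{i}\lambda^{i}(1^{\otimes i-1}\otimes \lambda^{n-i+2})$, where the $i = n$ term uses the unit of $\Baru A$ and records $\lambda^1 = \delta_M$ acting on $M$. These are precisely the two families of diagrams in Lemma~\ref{lem:represen-iff-diagrams}, so $\lambda_M\,\phi^{-1}(\lambda_M) = 0$ is equivalent to $p_M$ being a representation.

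For part (2), Lemma~\ref{rem:univ-props-cofree-comods}.(2), applied with source comodule $P = \Baru A \otimes M$ carrying the coderivation $\phi^{-1}(\lambda_M)$ and target cofree comodule $\Baru A \otimes N$ carrying $\phi^{-1}(\lambda_N)$, shows that $\psi^{-1}(g)$ is a morphism of dg-comodules if and only if $g\,\phi^{-1}(\lambda_M) = \lambda_N\,\psi^{-1}(g)$. Expanding the left side as $g(d \otimes 1) + g(1 \otimes \lambda_M)(\Delta \otimes 1)$ and the right side as $\lambda_N (1 \otimes g)(\Delta \otimes 1)$, the same bookkeeping identifies the three resulting sums with, respectively, the $\mu$-insertion term, the $g$-then-$\lambda_M$ term, and the $\lambda_N$-then-$g$ term of the morphism equation in Lemma~\ref{lem:morphism-represen-iff-diagrams}. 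Rearranging the equality onto one side thus reproduces that equation, so $\psi^{-1}(g)$ is a morphism of dg-comodules if and only if $f$ is a morphism of representations.

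The main obstacle is the bookkeeping underlying the two matching claims: one must verify that the degree $-1$ shift on $\Baru A \otimes M$ together with the sign conventions of~\ref{rem:signs_for_diags} make the adjunction $p \leftrightarrow \lambda$ carry $\phi^{-1}(\lambda_M)^2$ and the intertwining identity to exactly the stated sums of diagrams, \emph{with matching signs} rather than merely matching shapes. The counit of $\Baru A$ is essential here, since it is the extreme ($i = n$, respectively $i = 0$) terms of the coproduct that reproduce the $\lambda^1 = \delta_M$ contributions; this is the linear analogue of adjoining a unit in passing from $\hhc A B$ to $\hhcu A B$, and is exactly why the construction must use $\Baru A$ rather than $\Bar A$.
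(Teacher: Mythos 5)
Your proposal is correct and follows the same route as the paper: both reduce part (1) to $\lambda_M\,\phi^{-1}(\lambda_M)=0$ via Lemma \ref{rem:univ-props-cofree-comods}.(1) and part (2) to $g\,\phi^{-1}(\lambda_M)=\lambda_N\,\psi^{-1}(g)$ via Lemma \ref{rem:univ-props-cofree-comods}.(2), and then match the expanded terms against the diagrammatic equations of Lemmas \ref{lem:represen-iff-diagrams} and \ref{lem:morphism-represen-iff-diagrams}. Your write-up simply makes explicit the term-by-term bookkeeping (and the role of the counit in producing the $\lambda^1=\delta_M$ contributions) that the paper leaves to the reader.
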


\begin{proof}
The pair $(\Baru A \otimes M,
    \phi^{-1}(\lambda_{M}))$ is a dg $\Baru A$-comodule if and only
    if $(\phi^{-1}(\lambda_{M}))^{2} = 0.$ By
    \ref{rem:univ-props-cofree-comods}.(1), this is equivalent to the equation
    $\lambda_{M} \phi^{-1}(\lambda_{M}) = 0,$ and by the definition of
    $\phi^{-1},$ we see this is equivalent
    to the equations of \ref{lem:represen-iff-diagrams}.

Analogously, $\psi^{-1}(g)$ is a morphism of dg-comodules if and only
if it commutes with the coderivations $\phi^{-1}(\lambda_{M})$ and
$\phi^{-1}(\lambda_{N})$. By \ref{rem:univ-props-cofree-comods}.(2) this
is equivalent to $g \phi^{-1}(\lambda_{M}) = \lambda_{N} \psi^{-1}(g)$, and from
the definitions of $\phi^{-1}$ and $\psi^{-1},$ this in
 equivalent to the equations of \ref{lem:morphism-represen-iff-diagrams}.
\end{proof}

\begin{cor}
Let $(A, \mu)$ be a nonunital \Ai-algebra. There is a functor from the
category of
representations of $A$ to the category of dg $\Baru A$ comodules, that sends $(M,
p_{M})$ to $(\Baru A \otimes M, \phi^{-1}(\lambda_{M}))$. This is fully faithful with
image the full subcategory of cofree dg comodules.
\end{cor}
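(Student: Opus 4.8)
The plan is to assemble the functor directly from the two universal-property statements of the Proposition above, and then to extract full faithfulness and the description of the image from the fact that the maps $\phi$ and $\psi$ of \ref{rem:univ-props-cofree-comods} are genuine \emph{isomorphisms}, not merely correspondences. First I would define the functor $F$. On objects, send a representation $(M, p_M)$, with adjoint family $\lambda_M$, to $(\Baru A \otimes M, \phi^{-1}(\lambda_M))$; by part (1) of the Proposition this is a dg-$\Baru A$-comodule, and it is cofree by construction. On morphisms, send $f\colon (M,p_M)\to(N,p_N)$, with adjoint family $g$, to $\psi^{-1}(g)$; part (2) of the Proposition guarantees this is a morphism of dg-comodules. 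In particular every object in the image is a cofree dg-comodule.

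Next I would check that $F$ is a functor. That identities are preserved is immediate: the identity endomorphism of $(M,p_M)$ has adjoint family concentrated in tensor degree zero, where it is $\id_M$, and $\psi^{-1}$ of this is $\id_{\Baru A \otimes M}$ since $\psi(\id) = (\epsilon \otimes 1)$. The substantive point is compatibility with composition. For composable $f,\tilde f$ the composite of representation morphisms is $\tilde f \star f$, and I must verify $\psi^{-1}(\text{adjoint of }\tilde f \star f) = \psi^{-1}(\tilde g)\,\psi^{-1}(g)$. Because $\psi$ is a bijection it suffices to compare adjoints, i.e.\ to apply $(\epsilon \otimes 1)$ to $\psi^{-1}(\tilde g)\,\psi^{-1}(g)$ and match the result with the adjoint of $\tilde f \star f$; unwinding $\psi^{-1}(\alpha) = (1 \otimes \alpha)\Delta_P$ and the deconcatenation coproduct on $\Baru A$ reduces this to the summation over splittings that \emph{defines} $\star$, with the sign factor $\gamma = sc(s^{-1}\otimes s^{-1})$ present for exactly this reason. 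This coproduct bookkeeping is the main obstacle, though it is formal once the comultiplication is expanded.

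Finally I would deduce full faithfulness and compute the image. The adjunction isomorphisms preceding \ref{lem:represen-iff-diagrams} make $f \mapsto g$ a bijection from all of $\hhcu A {\Hom {} M N}_{1}$ onto maps $\Baru A \otimes M \to N$ of the appropriate degree, and $\psi$ is a bijection onto comodule morphisms $\Baru A \otimes M \to \Baru A \otimes N$; hence $f \mapsto \psi^{-1}(g)$ is a bijection of ambient sets. By part (2) of the Proposition it restricts to a bijection between morphisms of representations and morphisms of dg-comodules, so $F$ is fully faithful. For the image, given any cofree dg-comodule $(\Baru A \otimes M, d_P)$, set $\lambda_M = (\epsilon \otimes 1) d_P = \phi(d_P)$; then $d_P = \phi^{-1}(\lambda_M)$ and, by \ref{rem:univ-props-cofree-comods}.(1), $d_P^2 = 0$ is equivalent to $\lambda_M\,\phi^{-1}(\lambda_M) = 0$, which by part (1) of the Proposition says that the element $p_M$ adjoint to $\lambda_M$ satisfies the equations of \ref{lem:represen-iff-diagrams}, i.e.\ is a representation. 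Thus $F(M,p_M) = (\Baru A \otimes M, d_P)$, so $F$ surjects onto the objects of the full subcategory of cofree dg-comodules; combined with full faithfulness, $F$ is an equivalence onto that subcategory.
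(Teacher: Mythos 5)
Your proposal is correct and follows exactly the route the paper intends: the corollary is stated without proof as an immediate consequence of the preceding Proposition together with the isomorphisms $\phi$ and $\psi$ of Lemma \ref{rem:univ-props-cofree-comods}, and you have simply written out the implicit argument (functoriality via the compatibility of $\star$ with the deconcatenation coproduct, full faithfulness from bijectivity of the adjunction and of $\psi$, and essential surjectivity onto cofree dg-comodules from bijectivity of $\phi$). No gaps.
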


We now assume that $A$ has a split unit, and construct the analogue of
the above for strictly unital representations of $A$. 

\begin{defn}
A \emph{curved dg-comodule} over a curved dg-coalgebra $(C, d, \xi)$
is a pair $(P, d_{P}),$ with $P$ a graded $C$ comodule and $d_{P} \in \Coder^{d}(P, P)_{-1},$ that satisfies
\[d_P^{2} = \left ( P \xra{\Delta} C \otimes P \xra{\xi \otimes 1} k
    \otimes P \cong P \right ) =: L_{\xi}.\]
A \emph{morphism of curved dg-comodules $(P, d_{P}) \to (N, d_{N})$}
is a degree zero morphism of graded $C$-comodules $f: P \to N$ that satisfies $f d_{P} = d_{N} f.$
\end{defn}

If $(A, 1, \o \mu + h + \msu)$ is an \Ai-algebra with split unit, we
denote by $\Baru {\o A}$ the counital curved bar construction $(\Tcou {\Pi \o A}, \Phi^{-1}(\o \mu),
-s^{-1}h),$ where $\Phi^{-1}(\o \mu)$ and $-s^{-1}h$ are extended by zero from $\Tco {\Pi \o A}$
to $\Tcou {\Pi  \o A}.$

\begin{thm}
Let $(A, 1, \o \mu + h + \msu)$ be an \Ai-algebra with split unit and
counital curved bar construction $\Baru {\o A}.$
Let $M, N$ be graded modules.
\begin{enumerate}
\item A strictly unital element $p = \o p + \gsu$, with $\o p \in
  \hhcu {\o A} {\End M}_{0}$, is a representation if and only if, for
  $\o \lambda$ the adjoint family of $\o p$, the pair
  $(\Baru {\o A} \otimes M, \phi^{-1}(\o \lambda))$ is a curved dg-$\Baru
  {\o A}$ comodule.
 
    \item  An element $f \in \hhcu {\o A} {\Hom {} M N}_{1}$ is a
      morphism of strictly unital representations $(M, \o p_{M}) \to
      (N,\o p_{N})$ if and only if
      \begin{displaymath}
\psi^{-1}(g): (\Baru
      {\o A} \otimes M,\phi^{-1}(\o \lambda_{M})) \to (\Baru
      {\o A} \otimes N,\phi^{-1}(\o \lambda_{N}))
    \end{displaymath}
    is a morphism of curved dg-$\Baru
      {\o A}$ comodules, where $g, \o \lambda_{M},\o \lambda_{N}$ are the adjoint families of
      $f, \o p_{M}, \o p_{N}.$
\end{enumerate}
\end{thm}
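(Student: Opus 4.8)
The plan is to mirror the proof of the preceding (uncurved) Proposition, replacing the squaring-to-zero condition by the curvature equation and invoking the strictly unital diagram Lemma~\ref{lem:diagrams-for-s-ual-reps} in place of Lemmas~\ref{lem:represen-iff-diagrams} and~\ref{lem:morphism-represen-iff-diagrams}. Throughout I write $\xi = -s^{-1}h$ for the curvature of $\Baru{\o A}$, recalling from Corollary~\ref{cor:sual-ai-alg-equiv-to-curved-bar} that $(\Tcou{\Pi\o A}, \Phi^{-1}(\o\mu), \xi)$ is indeed a curved dg-coalgebra, so that $(\Phi^{-1}(\o\mu))^2 = \ad{\xi}$.

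For part (1), I would first note that by Lemma~\ref{rem:univ-props-cofree-comods}.(1), applied to the coderivation $\Phi^{-1}(\o\mu)$, the degree $-1$ coderivation of the cofree comodule $\Baru{\o A}\otimes M$ whose adjoint is $\o\lambda$ is precisely $\phi^{-1}(\o\lambda)$; hence $(\Baru{\o A}\otimes M, \phi^{-1}(\o\lambda))$ is a curved dg-$\Baru{\o A}$-comodule exactly when $(\phi^{-1}(\o\lambda))^2 = L_{\xi}$. The one genuinely new ingredient is the curved analogue of the squaring criterion of Lemma~\ref{rem:univ-props-cofree-comods}.(1): both $(\phi^{-1}(\o\lambda))^2$ and $L_{\xi}$ are coderivations of $\Baru{\o A}\otimes M$ with respect to $(\Phi^{-1}(\o\mu))^2 = \ad{\xi}$ — the former because the square of a $d$-coderivation is a $d^2$-coderivation (the two cross terms cancel by the Koszul sign, as $\Phi^{-1}(\o\mu)$ and $\phi^{-1}(\o\lambda)$ are both odd), the latter by a direct check from the comodule axiom. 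Since $\phi$ is an isomorphism on $\ad{\xi}$-coderivations, the equality $(\phi^{-1}(\o\lambda))^2 = L_{\xi}$ holds if and only if the two sides have the same image under $\phi = (\epsilon\otimes 1)(-)$, i.e. if and only if $\o\lambda\,\phi^{-1}(\o\lambda) = (\epsilon\otimes 1)L_{\xi} = \xi\otimes 1_M$. Unwinding $\phi^{-1}(\o\lambda) = \Phi^{-1}(\o\mu)\otimes 1 + (1\otimes\o\lambda)(\Delta\otimes 1)$ then reproduces the displayed equation of Lemma~\ref{lem:diagrams-for-s-ual-reps}.(1): the two coalgebra terms give its first two families of diagrams, while the curvature term $\xi\otimes 1_M = -s^{-1}h\otimes 1_M$ matches the third ($s^{-1}h^n$) family. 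The hard part will be exactly here — verifying that $L_{\xi}$ really is an $\ad{\xi}$-coderivation and computing $(\epsilon\otimes 1)L_{\xi}$ with the correct sign, via the counit relation, so that it lines up with the $h$-term of Lemma~\ref{lem:diagrams-for-s-ual-reps}.(1) rather than cancelling against it; this sign bookkeeping is handled exactly as in the proof of Corollary~\ref{cor:sual-ai-alg-equiv-to-curved-bar}.

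For part (2), no new curvature term appears, since a morphism of curved dg-comodules is defined by the same equation $f d_P = d_N f$ as in the uncurved case; the argument is then formally identical to that of the uncurved Proposition, using the strictly unital lemma. Concretely, $\psi^{-1}(g)$ is a morphism of curved dg-$\Baru{\o A}$-comodules if and only if it commutes with $\phi^{-1}(\o\lambda_M)$ and $\phi^{-1}(\o\lambda_N)$; by Lemma~\ref{rem:univ-props-cofree-comods}.(2) this is equivalent to $g\,\phi^{-1}(\o\lambda_M) = \o\lambda_N\,\psi^{-1}(g)$, and unwinding the definitions of $\phi^{-1}$ and $\psi^{-1}$ converts this into the three displayed families of diagrams of Lemma~\ref{lem:diagrams-for-s-ual-reps}.(2). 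The only point to confirm is that the hypothesis $f\in\hhcu{\o A}{\Hom{}MN}_1$ is precisely what permits working over $\Tcou{\Pi\o A}$ rather than $\Tcou{\Pi A}$, which is already built into the formulation of Lemma~\ref{lem:diagrams-for-s-ual-reps}.(2).
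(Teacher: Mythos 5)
Your proposal is correct and follows essentially the same route as the paper: reduce the curvature equation $(\phi^{-1}(\o\lambda))^{2}=L_{-s^{-1}h}$ to its image under $\phi=(\epsilon\otimes 1)(-)$ by observing that both sides are coderivations with respect to $\o d^{2}=\ad{-s^{-1}h}$, then match the resulting identity (and, for part (2), the identity $\o\lambda_{N}\psi^{-1}(g)=g\,\phi^{-1}(\o\lambda_{M})$ from Lemma~\ref{rem:univ-props-cofree-comods}.(2)) term by term with the diagrams of Lemma~\ref{lem:diagrams-for-s-ual-reps}. The points you flag as delicate — that $L_{\xi}$ is an $\ad{\xi}$-coderivation and that $(\epsilon\otimes 1)L_{\xi}$ lands on the $s^{-1}h$-term with the right sign — are exactly the checks the paper leaves to the reader, so nothing is missing.
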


\begin{proof}
The pair $(\Baru {\o A} \otimes M, \phi^{-1}(\o \lambda))$ is a curved dg $\Baru
  {\o A}$-comodule if and only if $\phi^{-1}(\o \lambda)^{2} = L_{-s^{-1}
    h}.$ Since $\phi^{-1}(\o \lambda)$ is a coderivation with respect to $\o
  d = \Phi^{-1}(\o \mu)$, $\phi^{-1}(\o \lambda)^{2}$ is a coderivation with
  respect to $\o d^{2} = \ad {-s^{-1}h},$ and one checks $L_{-s^{-1}h}$
  is also a coderivation with respect to $\ad {-s^{-1}h}$. Thus by
  \ref{rem:univ-props-cofree-comods}.(1), $\phi^{-1}(\o \lambda)^{2} + L_{s^{-1}
    h} = 0$ exactly when $\o \lambda\phi^{-1}(\o \lambda) + (\epsilon \otimes
  1)L_{s^{-1}h} = 0.$ The adjoint of $\o \lambda\phi^{-1}(\o \lambda) = \o \lambda(\o d
  \otimes 1 + (1 \otimes \o \lambda)(\Delta \otimes 1))$ is equal to the
  first two terms of the equation of
  \ref{lem:diagrams-for-s-ual-reps}.(1), while the adjoint of $(\epsilon \otimes
  1)L_{s^{-1}h} = (\Baru {\o A} \otimes M \xra{s^{-1}h \otimes 1} k
  \otimes M \cong M)$ is the third term  of
  \ref{lem:diagrams-for-s-ual-reps}.(1). Now by  \ref{rem:univ-props-cofree-comods}.(2), $\psi^{-1}(g)$ is a
  morphism of curved dg comodules if and only if $\o \lambda_{N}
  \psi^{-1}(g) - g\phi^{-1}(\o \lambda_{M}) = 0$. The first term is
  the adjoint of the first term of
  \ref{lem:diagrams-for-s-ual-reps}.(2), and the second term is the
  adjoint of the second and third terms of \ref{lem:diagrams-for-s-ual-reps}.(2).
\end{proof}

\begin{ex}
  Let $(A, 1, \mu)$ be the Koszul complex on $f \in k$ with curved bar
  construction $\Bar \o A = (k[T], f T^{*})$, see
  \ref{ex:curved-bar-constr-of-koszul-cx}, and let $(M, \o \lambda)$ be a
  strictly unital representation, described in
  \ref{ex:strictly-unital-modules-over-Koszul-complex}. Set $d_{M} =
  \phi^{-1}(\o \lambda): k[T] \otimes M \to k[T] \otimes M.$ For $x \in M,$
  $d_{M}(T^j \otimes x) = \sum_{k = 0}^{j} T^k \otimes
  \sigma^{j-k}(x),$ where $\sigma^{j-k}$ is the composition $M \xra{\cong}
  k[e]^{\otimes j - k} \otimes M \xra{\overbar \lambda^{j-k}} M.$ It follows
  from \ref{ex:strictly-unital-modules-over-Koszul-complex} that
  $d_M^{2}(T^j\otimes x) = -fT^{j-1} \otimes x$.
  
Dualizing gives a graded module over the polynomial ring $k[T^{*}]$
and a degree -1 map on $k[T^{*}] \otimes M^{*}$ whose square is
multiplication by $-fT^{*}.$ Sheafifying this, we get two $k$-modules
and maps,
$M^{\ev} \to M^{\odd} \to \Pi^{-1} M^{\ev},$ whose composition is
multiplication by $f \in k.$ This is exactly a matrix factorization in
the sense of Eisenbud \cite{Ei80}.
\end{ex}

\begin{cor}
Let $(A, 1, \o \mu + h + \msu)$ be an \Ai-algebra with split unit. There is a functor from the
category of strictly unital
representations of $A$ to the category of curved dg $\Baru {\o A}$ comodules, that sends $(M,
\o p_{M})$ to $(\Baru {\o A} \otimes M, \phi^{-1}(\o \lambda_{M}))$. This is fully faithful with
image the full subcategory of cofree curved dg comodules.
\end{cor}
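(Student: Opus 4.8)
The plan is to package the two parts of the preceding theorem into a single functor and then read off full faithfulness together with the description of the image, exactly as in the nonunital case treated earlier in this section. Write $\mathcal{B}$ for the functor. On an object $(M, \o p_M)$ I set $\mathcal{B}(M, \o p_M) = (\Baru{\o A} \otimes M, \phi^{-1}(\o \lambda_M))$, where $\o \lambda_M$ is the adjoint family of $\o p_M$ (whose first component is $\delta_M$); part~(1) of the theorem says this pair is a curved dg-$\Baru{\o A}$ comodule, so $\mathcal{B}$ is well defined on objects. On a morphism $f$ of strictly unital representations, with adjoint family $g$, I set $\mathcal{B}(f) = \psi^{-1}(g)$; part~(2) of the theorem says $\psi^{-1}(g)$ is a morphism of the associated curved dg-comodules, so $\mathcal{B}$ is well defined on morphisms.

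First I would check functoriality. Composition of morphisms of representations is the $\star$-product, whereas composition of the corresponding comodule maps is ordinary composition, so I must verify that $\psi^{-1}$ carries one to the other and that the identity morphism of $(M,\o p_M)$ goes to $\id_{\Baru{\o A}\otimes M}$. Using the formula $\psi^{-1}(\alpha) = (1 \otimes \alpha)\Delta_P$ from Lemma~\ref{rem:univ-props-cofree-comods}.(2), the definition of $\star$ through $\gamma = sc(s^{-1}\otimes s^{-1})$, and coassociativity of the comultiplication on $\Baru{\o A}$, this becomes a diagram chase identical to the one needed in the nonunital case. I expect this compatibility of the $\star$-product with composition of comodule morphisms to be the main obstacle, since it is the one point not handed to us directly by the theorem; it is nonetheless a routine, if sign-sensitive, manipulation of the universal property.

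Next I would establish full faithfulness. Passing from $f$ to its adjoint family $g$ is a bijection from $\hhcu{\o A}{\Hom{}{M}{N}}_1$ onto $\Hom{}{\Baru{\o A}\otimes M}{N}_0$, and $\psi$ is an isomorphism by Lemma~\ref{rem:univ-props-cofree-comods}.(2); hence $f \mapsto \psi^{-1}(g)$ is a bijection from $\hhcu{\o A}{\Hom{}{M}{N}}_1$ onto the graded $\Baru{\o A}$-comodule maps $\Baru{\o A}\otimes M \to \Baru{\o A}\otimes N$ of degree zero. Since part~(2) of the theorem characterizes the morphisms of strictly unital representations as exactly those $f$ whose image $\psi^{-1}(g)$ is a morphism of curved dg-comodules, this bijection restricts to a bijection between the morphisms in the two categories, which is precisely full faithfulness.

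Finally I would identify the image. Every object in the image is cofree by construction. Conversely, a cofree curved dg-$\Baru{\o A}$ comodule has underlying comodule $\Baru{\o A}\otimes M$ for some graded module $M$, and by Lemma~\ref{rem:univ-props-cofree-comods}.(1) its coderivation is $\phi^{-1}(\o \lambda)$ for a unique family $\o \lambda$ (whose first component recovers the differential of $M$). Part~(1) of the theorem then says this is a curved dg-comodule exactly when $\o \lambda$ is the adjoint family of a strictly unital representation $\o p + \gsu$ of $A$ on $M$; hence the object lies in the image of $\mathcal{B}$. Thus $\mathcal{B}$ is essentially surjective onto the full subcategory of cofree curved dg comodules, and combined with full faithfulness this exhibits $\mathcal{B}$ as an equivalence onto that full subcategory, proving the corollary.
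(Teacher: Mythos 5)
Your argument is correct and is essentially the intended one: the paper states this corollary with no proof at all, treating it as an immediate consequence of the preceding theorem together with Lemma \ref{rem:univ-props-cofree-comods}, and your three steps (functoriality via compatibility of $\star$ with composition under $\psi^{-1}$, full faithfulness from the bijection $f \mapsto \psi^{-1}(g)$ restricted along part (2) of the theorem, and essential surjectivity onto cofree objects from Lemma \ref{rem:univ-props-cofree-comods}.(1) combined with part (1) of the theorem) supply exactly the routine verifications the author leaves implicit. The only point deserving care, as you note, is the sign bookkeeping in checking that $\psi^{-1}$ carries the $\star$-composite of adjoint families to the composite of the corresponding comodule maps, which follows from coassociativity and the conventions of \ref{rem:signs_for_diags}.
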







\def\cprime{$'$}

\end{document}